\newtheorem{thm}{Theorem}
\newtheorem{prop}[thm]{Proposition}
\newtheorem{lemma}[thm]{Lemma}
\newtheorem{cor}[thm]{Corollary}
\newcommand{\R}{\mathbb{R}}
\newcommand{\Z}{\mathbb{Z}}
\newcommand{\Q}{\mathbb{Q}}
\newcommand{\bdry}{\partial}
\newcommand{\s}{\vskip.1in}
\newcommand{\n}{\noindent}
\newcommand{\F}{\mathbb{F}}
\renewcommand{\L}{\mathbb{L}}
\newcommand{\M}{\mathbb{M}}
\newcommand{\mo}{\mathcal{M}_{\omega}}
\newcommand{\be}{\begin{enumerate}}
\newcommand{\ee}{\end{enumerate}}
\begin{document}
%%%%%%%%%%%%%%%%%%%%%%%%%%%%%%%%%%%%%%%%

\title{Giroux torsion and twisted coefficients}

\author{Paolo Ghiggini}
\address{California Institute of Technology, Pasadena, CA 91125}
\email{ghiggini@caltech.edu}
\urladdr{http://www.its.caltech.edu/~ghiggini}

\author{Ko Honda}
\address{University of Southern California, Los Angeles, CA 90089}
\email{khonda@usc.edu} \urladdr{http://rcf.usc.edu/\char126
khonda}

\date{This version: April 9, 2008.}

\keywords{tight, contact structure, Giroux torsion, Heegaard Floer
homology, sutured manifolds, twisted coefficients}

\subjclass{Primary 57M50; Secondary 53C15.}

\thanks{PG was partially supported by a CIRGET fellowship and by the Chaire de Recherche du Canada en
alg\`ebre, combinatoire et informatique math\'ematique de l'UQAM. KH
was supported by an NSF CAREER Award (DMS-0237386).}

\begin{abstract}
We explain the effect of applying a full Lutz twist along a
pre-Lagrangian torus in a contact $3$-manifold, on the contact
invariant in Heegaard Floer homology with twisted coefficients.
\end{abstract}

\maketitle

\s
Let $(M,\xi)$ be a contact $3$-manifold and $T\subset M$ be a {\em
pre-Lagrangian torus}, i.e., an embedded torus whose characteristic
foliation $\xi L=\xi\cap TL$ is linear. By slightly
perturbing $T$, we may assume that it is linearly foliated by
closed orbits, and, by choosing a suitable identification $T\cong
\R^2/\Z^2$, we may assume that the orbits have slope $\infty$.  We
say that $(M,\xi')$ is obtained from $(M,\xi)$ by a {\em full Lutz
twist along $T$} if we cut $(M,\xi)$ along $T$ and insert
$(T^2\times[0,1],\eta_{2\pi })$, where $(x,y,t)$ are coordinates on
$T^2\times [0,1]\cong \R^2/\Z^2\times[0,1]$ and $\eta_{2\pi}=\ker
(\cos (2\pi t) dx - \sin (2\pi t) dy)$.  A contact manifold
$(M,\xi)$ has {\em ${2\pi n}$-torsion along $T$} if there exists a
thickened torus $(T^2\times[0,1],\eta_{2\pi n})$ which embeds into
$(M,\xi)$, so that $T^2\times\{t\}$ are isotopic to $T$ and
$\eta_{2\pi n}$ is obtained by stacking $n$ copies of $\eta_{2\pi}$.
Also $(M,\xi)$ has {\em finite torsion along $T$} if there is a
positive integer $n$ so that $(M,\xi)$ has ${2\pi n}$-torsion along
$T$ but does not have ${2\pi (n+1)}$-torsion along $T$.

In this paper, we assume that our $3$-manifolds are compact and
oriented, and our contact structures are cooriented, unless stated
otherwise. In a previous paper~\cite{GHV}, the authors and Van
Horn-Morris proved the following:

\begin{thm}[Vanishing Theorem]\label{torsion}
Suppose the coefficient ring of the Heegaard Floer homology groups
is $\Z$. If a closed, oriented contact 3-manifold $(M,\xi')$ is
obtained from $(M,\xi)$ by a full Lutz twist along a pre-Lagrangian
torus $T$, then its contact invariant $c(M,\xi';\Z)$ in
$\widehat{HF}(-M;\Z)/\{\pm 1\}$ vanishes.
\end{thm}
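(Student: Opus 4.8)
The plan is to work in the sutured Floer homology framework of Honda--Kazez--Mati\'c. Recall that a contact $3$-manifold $(N,\xi)$ with convex boundary determines a class $EH(N,\xi)\in SFH(-N)$; that $EH$ is natural under the gluing maps associated to attaching a contact piece along (part of) the boundary; and that when $\bdry N=\emptyset$ the class $EH(N,\xi)$ equals, up to sign, the Ozsv\'ath--Szab\'o contact invariant in $\widehat{HF}(-N)$. Since an overtwisted contact structure has vanishing contact invariant, we may assume $\xi'$ is tight. \emph{Step 1 (localize).} By hypothesis there is an embedding $(T^2\times[0,1],\eta_{2\pi})\hookrightarrow(M,\xi')$ with $T^2\times\{t\}$ isotopic to $T$; perturbing the two boundary tori to be convex with two parallel dividing curves of slope $\infty$, write $M=M_0\cup_{\bdry}(T^2\times[0,1])$ and set $\xi_0=\xi'|_{M_0}$. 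Naturality of $EH$ gives $c(M,\xi')=\Phi_{\eta_{2\pi}}\!\bigl(EH(M_0,\xi_0)\bigr)$, where $\Phi_{\eta_{2\pi}}\colon SFH(-M_0)\to\widehat{HF}(-M)$ is the gluing map for inserting the torsion layer; since this map is given by pairing with the class $EH(T^2\times[0,1],\eta_{2\pi})$ and the pairing is bilinear, it suffices to prove that $EH(T^2\times[0,1],\eta_{2\pi})=0$ in $SFH$ with $\Z$-coefficients.

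\emph{Step 2 (the local vanishing).} The characteristic foliations of the tori $T^2\times\{t\}$ inside $\eta_{2\pi}$ realise every slope, so the torsion layer contains convex tori of all slopes and factors into basic slices running through slopes $\infty\sa 0\sa\infty\sa 0\sa\infty$; accordingly $EH(\eta_{2\pi})$ can be computed step by step by applying basic-slice gluing maps, each of which fits into a bypass exact triangle. The plan is to replace one step by its triangle and chase the class around it: one arranges the bypass attaching arc so that the third vertex of the triangle corresponds to attaching the opposite bypass inside the torsion layer, and checks that the resulting contact structure is overtwisted, so that its $EH$-class vanishes. Exactness of the triangle then forces $EH(\eta_{2\pi})=0$, hence $c(M,\xi')=0$.

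The heart of the proof is Step 2. The delicate points are the convex surface theory needed to arrange that the third configuration in the bypass triangle is genuinely overtwisted, and the check that $EH$ is carried along the two relevant edges of that triangle. Step 2 is also exactly where $\Z$-coefficients are essential: over $\Z[H_1(M;\Z)]$-twisted coefficients the corresponding bypass triangle has a nonzero connecting homomorphism, and $EH(T^2\times[0,1],\eta_{2\pi})$ becomes a nonzero multiple of an element of the shape $1-[\gamma]$ for $\gamma$ a curve on $T$, which maps to $0$ only after the augmentation $\Z[H_1]\to\Z$. Pinning down what survives over twisted coefficients --- and for how much torsion --- is the subject of the rest of the paper.
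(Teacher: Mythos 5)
First, a point of orientation: the present paper does not prove Theorem~\ref{torsion}; it is quoted from the earlier paper~\cite{GHV}, and is then \emph{used} as input (the proof of Lemma~\ref{lemma: add to basic slice} deduces $p(1)=0$ from the~\cite{GHV} vanishing). So there is no in-paper proof to compare against, and your sketch can only be assessed against~\cite{GHV}. Your Step~1 --- reducing, via the Honda--Kazez--Mati\'c gluing map and bilinearity of the pairing, to the vanishing of the sutured contact class $EH(T^2\times[0,1],\eta_{2\pi};\Z)$ --- is correct and is indeed the reduction used in~\cite{GHV}.

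Step~2 is where all the content lives, and as written it has a genuine gap. You assert without verification that a suitable bypass attaching arc produces an overtwisted contact structure at the third vertex; that is precisely the hard convex-surface-theory work and cannot simply be postulated. But even granting it, ``exactness of the triangle then forces $EH(\eta_{2\pi})=0$'' does not follow. The bypass exact triangle relates three sutured Floer groups $SFH(-N,-\Gamma_1)\to SFH(-N,-\Gamma_2)\to SFH(-N,-\Gamma_3)$ of the \emph{same} underlying manifold with three dividing sets related by bypass moves, whereas the ``basic-slice gluing maps'' you invoke are maps between sutured manifolds of different underlying topology; your sketch never specifies which arrow of which triangle carries $EH(\eta_{2\pi})$ and which carries the overtwisted class, and in any case a class whose image under one arrow vanishes merely lies in the image of the preceding arrow --- it is not thereby zero. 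You would need to pin down the triangle and the diagram chase explicitly. (My recollection is that~\cite{GHV} establishes the local vanishing by a direct sutured Heegaard diagram computation exhibiting the contact generator of the torsion layer as a boundary, rather than a triangle chase, so the two approaches diverge precisely at this step.) One small factual correction to your closing remark: twisted coefficients for $\widehat{HF}$ and $SFH$ are taken in $\Z[H_2(M;\Z)]$, not $\Z[H_1(M;\Z)]$, and the relevant multiplier is $e^{[T]}-1$ for $[T]\in H_2(M;\Z)$ the class of the torus, not $1-[\gamma]$ for a curve $\gamma\subset T$.
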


Theorem~\ref{torsion} was first conjectured in \cite[Conjecture
8.3]{Gh2}, and partial results were obtained by \cite{Gh1},
\cite{Gh2}, and \cite{LS1}. The corresponding vanishing result for
the contact class in monopole Floer homology is due to
Gay~\cite{Ga}, using results of Mrowka and Rollin~\cite{MR}. Theorem
\ref{torsion}, together with a non-vanishing result of the contact
invariant proved by Ozsv\'ath and Szab\'o \cite[Theorem 4.2]{OSz3},
implies that a contact manifold with $2 \pi$-torsion is not strongly
symplectically fillable. This non-fillability result was conjectured
by Eliashberg, and first proved by Gay \cite{Ga}.

The goal of the present paper is go further and explain what happens
when we use twisted coefficients.  Consider the group ring
$\L=\Z[H_2(M;\Z)]$. If $a \in H_2(M; \Z)$, we denote by $e^a$ the
corresponding element in $\L=\Z[H_2(M; \Z)]$. If $\M$ is a
$\Z$-algebra and $\L\to \M$ is a $\Z$-algebra homomorphism which
induces an $\L$-module structure on $\M$, then the contact invariant
$\underline{c}(M,\xi;\M)$ is an element of
$\underline{\widehat{HF}}(-M;\M)/\M^{\times}$, where $\M^\times$
denotes the group of units of $\M$.\footnote{Although it is stated
in \cite{OSz3} that for any module $\M$ over $\L$ we can get an
element $c(M,\xi;\M)\in \widehat{HF}(-M;\M)/ \L^\times$, in reality
we need to be able to pick out a preferred element (or a collection
of preferred elements) in $\L$ for each intersection point. The
easiest way is for $\M$ to be an $\L$-module via an algebra
homomorphism $\L\to \M$.} In this paper we follow the usual conventions
and underline to indicate that there is a (presumably) nontrivial
$\L$-action on the Heegaard Floer homology or sutured Floer homology
group.

The following is our main theorem:

\begin{thm}\label{thm: main}
There exists a Laurent polynomial $p(t)=t-1\in \Z[t,t^{-1}]$ such
that the following holds: For any closed, oriented contact
$3$-manifold $(M, \xi)$ and a pre-Lagrangian torus $T \subset M$, if
$(M,\xi')$ is obtained from $(M,\xi)$ by a full Lutz twist along
$T$, then
\begin{equation}\label{equation: main}
\underline{c}(M,\xi';\M)= p(e^{[T]}) \cdot \underline{c}(M,\xi;\M).
\end{equation}
Here there is a $\Z$-algebra homomorphism $\L\to \M$ which induces
an $\L$-module structure on $\M$, and $\underline{c}(M,\xi';\M),
\underline{c}(M,\xi;\M)$ are elements in
$\underline{\widehat{HF}}(-M;\M)/\M^\times$.
\end{thm}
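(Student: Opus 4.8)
\section*{Strategy of the proof}

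The plan is to reduce \eqref{equation: main} to a local computation in a standard neighborhood of $T$ using the gluing maps for sutured Floer homology, and then to carry out that computation with twisted coefficients. First I would recall the contact class $\underline{EH}(M,\xi,\Gamma)\in\underline{SFH}(-M,-\Gamma)$ of a contact manifold with convex boundary and the Honda--Kazez--Mati\'c gluing maps, and upgrade them to twisted coefficients: a codimension-$0$ inclusion $(N,\Gamma_N)\hookrightarrow(N',\Gamma_{N'})$ together with a contact structure on $N'\setminus N$ induces a map
\[
\Phi\colon \underline{SFH}\bigl(-N,-\Gamma_N\bigr)\otimes_{\Z[H_2(N)]}\Z[H_2(N')]\longrightarrow \underline{SFH}\bigl(-N',-\Gamma_{N'}\bigr),
\]
linear over $\Z[H_2(N')]$, functorial along $H_2(N)\to H_2(N')$, and sending $\underline{EH}$ to $\underline{EH}$. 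Next, fix a tubular neighborhood $V\cong T^2\times[0,1]$ of $T$ in $(M,\xi)$, perturbed so that each boundary torus carries two parallel dividing curves of slope $\infty$, and a Darboux ball $B$ disjoint from $V$; put $N=(M\setminus B)\setminus\operatorname{int}V$. Since the full Lutz twist is supported in $V$ we have $\xi|_N=\xi'|_N$, so the single gluing map $\Phi$ that fills $(N,\xi|_N)$ into the punctured-ball boundary satisfies
\[
\underline{c}(M,\xi;\M)=\Phi\bigl(\underline{EH}(V,\xi|_V)\bigr),\qquad \underline{c}(M,\xi';\M)=\Phi\bigl(\underline{EH}(V,\xi'|_V)\bigr),
\]
where, after absorbing trivial collars, $(V,\xi|_V)$ is the zero-torsion model $(T^2\times[0,1],\xi_0)$ with boundary slopes $\infty$ and $(V,\xi'|_V)$ is $(T^2\times[0,1],\eta_{2\pi})$. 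Under $H_2(V)\cong\Z\to H_2(M)$ the generator $[T^2\times\{\mathrm{pt}\}]$ maps to $[T]$; writing $s$ for $e^{[T^2\times\{\mathrm{pt}\}]}$, the map $\Phi$ is linear over the substitution $s\mapsto e^{[T]}$, and $\widehat{\underline{HF}}(-M;\M)$ enters after the base change $\L\to\M$.

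By the preceding step, Theorem~\ref{thm: main} follows from the local identity
\[
\underline{EH}\bigl(T^2\times[0,1],\eta_{2\pi}\bigr)=(s-1)\cdot\underline{EH}\bigl(T^2\times[0,1],\xi_0\bigr)\quad\text{in}\quad \underline{SFH}\bigl(-(T^2\times[0,1]),\,-(\Gamma_\infty\sqcup\Gamma_\infty)\bigr),
\]
the coefficient ring being the relevant quotient of $\Z[H_2(T^2\times[0,1])]\cong\Z[s^{\pm1}]$. To prove this I would first compute this twisted $SFH$ group, either from an explicit sutured Heegaard diagram for $T^2\times[0,1]$ with these boundary conditions or by feeding the Honda--Kazez--Mati\'c computation of the untwisted group into the twisted change-of-coefficients spectral sequence; then identify the $\underline{EH}$ classes of the finitely many tight contact structures with these boundary slopes ($\xi_0$ and its $\pi$-stabilizations, including $\eta_\pi$ and $\eta_{2\pi}$) using Honda's classification together with the bypass exact triangles, which refine to $\underline{SFH}$. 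The new feature, compared with the vanishing statement of \cite{GHV}, is that a $\pi$-rotation layer is homologically essential: the bypasses relating $\eta_{2\pi}$, $\eta_\pi$, $\xi_0$ and their overtwisted neighbours wrap once around $T^2\times\{\mathrm{pt}\}$, so the connecting maps in the twisted bypass triangles carry a factor of $s$. Tracing $\underline{EH}(\eta_{2\pi})$ through these triangles determines it up to a unit $\pm s^{k}$, and the ambiguity is pinned down by specializing at $s=1$, where the identity must reduce to $\underline{EH}(\eta_{2\pi})=0$, i.e.\ to Theorem~\ref{torsion}; this forces $p(s)=s-1$.

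I expect the main obstacle to lie in this second step, and within it in the precision of the coefficient. Verifying that the gluing maps and the bypass exact triangles are genuinely functorial for twisted coefficients — so that every map in sight respects the $H_2$-action and the correct group-ring element is attached to each intersection point — is technical but, I expect, routine; what requires real care is computing the relevant connecting homomorphism finely enough to see the polynomial $s-1$ on the nose (up to the unavoidable $\pm s^{k}$), rather than merely seeing that $\underline{EH}(\eta_{2\pi})$ is annihilated by $s-1$. In particular one must exclude higher-order possibilities such as $(s-1)^2$ or $s^{2}-1$, and for that an explicit model — a sutured Heegaard diagram for $\eta_{2\pi}$ in which the periodic domain representing $[T^2\times\{\mathrm{pt}\}]$ is visible and the generator representing $\underline{EH}$ can be followed — appears to be unavoidable.
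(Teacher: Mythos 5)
Your high-level reduction — localize the Lutz twist inside a $T^2\times[0,1]$ layer, invoke the twisted-coefficients gluing/tensor map, and specialize at $t=1$ via the GHV vanishing theorem to force $(t-1)\mid p(t)$ — is the same skeleton the paper uses, and that part is sound. But there are two places where the proposal, as written, has genuine gaps, and in both the paper takes a different route precisely to avoid the difficulty you flag.

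First, the choice of local model matters more than you suggest. You take $T^2\times[0,1]$ with dividing curves of slope $\infty$ on \emph{both} boundary components and the zero-torsion structure $\xi_0$ inside. The paper instead uses a \emph{basic slice} (slopes $0$ and $\infty$), and for a reason: its Lemma shows that the twisted $\underline{SFH}$ of the basic slice is $\M\oplus\M\oplus\M\oplus\M$, one free rank-one summand per Spin$^c$ structure, with the $EH$ class of the basic slice generating its summand. Since $\xi$ and the Lutz-twisted $\xi'$ are homotopic, they lie in the same summand, and the scalar $p(t)\in\L$ is then \emph{automatically} well defined as the ratio of two elements of a free rank-one $\L$-module. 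In your $\infty/\infty$ model you assert a relation $\underline{EH}(\eta_{2\pi})=(s-1)\underline{EH}(\xi_0)$ without first establishing that the relevant Spin$^c$-component of $\underline{SFH}(-(T^2\times[0,1]),-(\Gamma_\infty\sqcup\Gamma_\infty);\Z[s^{\pm1}])$ is a free rank-one module with $\underline{EH}(\xi_0)$ a generator; if it is not (and the untwisted rank already exceeds one in the relevant Spin$^c$ structure), the scalar is not determined and the statement is not even well posed. You would need to compute this group and its module structure before the rest of the argument can start.

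Second, you correctly observe that the $s=1$ specialization only yields $(s-1)\mid p(s)$ and that pinning down $p(s)=s-1$ on the nose requires more; you propose tracing through twisted bypass triangles with an explicit sutured Heegaard diagram and concede that this ``appears to be unavoidable.'' It is avoidable, and the paper avoids it: it never computes the local $SFH$ directly. Instead it (a) proves $p(t)\neq0$ by embedding the basic slice into $(T^3,\xi_2)$ and invoking Ozsv\'ath--Szab\'o's non-vanishing of the twisted contact class for weakly fillable structures (a point your sketch does not address at all — $p(1)=0$ is consistent with $p\equiv0$), and (b) determines $p(t)=t-1$ by a global $4$-manifold computation: write $E(3)=W_1\cup W_0\cup W_2$ where $W_0\cong E(1)\setminus N(F_1)\setminus N(F_2)$ is a symplectic cobordism from $(T^3,\xi_2)$ to $(T^3,\xi_3)$, use the twisted composition law together with $\Phi_{E(2);\mo}=1$ and $\Phi_{E(3);\mo}=t-1$, and read off that $\underline{F}^+_{W_0;\mo}$ acts as multiplication by $t-1$ on the $\Z[\R]$-summand of $\underline{HF}^+(T^3;\mo)$ (which is identified with $\underline{HF}_{\mathrm{red}}$ and shown to carry the contact class). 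So the ``explicit model'' you expected to be indispensable is replaced by the known Ozsv\'ath--Szab\'o invariants of elliptic surfaces. Your proposal is therefore a correct outline of the reduction step but leaves the two most delicate points — well-definedness of $p$ in the chosen local model, and the exact value $p(t)=t-1$ — unresolved.
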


Theorem~\ref{thm: main} was partly inspired by the work of
Hutchings-Sullivan~\cite{HS} on the calculation of invariants of
contact structures on the $3$-torus in embedded contact homology.

Observe that, for the applications below, it is only necessary to
know that $p(t)$ is divisible by $t-1$. Let us write $\xi_n$, $n\in
\Z^{\geq 0}$, for the contact structure obtained from $(M,\xi)$ by
applying $n$ full Lutz twists along a pre-Lagrangian torus $T\subset
M$.

\begin{cor}
Let $\L\to \M$ be a $\Z$-algebra homomorphism. If $e^{[T]}$ acts
trivially on $\M$, i.e., as the identity, then
$\underline{c}(M,\xi_n;\M)=0$ in
$\underline{\widehat{HF}}(-M;\M)/\M^\times$ if $n>0$.
\end{cor}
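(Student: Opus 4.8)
The plan is to apply Theorem~\ref{thm: main} at each Lutz twist. Fix $n\geq 1$ and write $\xi_0 = \xi$. For each $k$ with $1\leq k\leq n$ I would first observe that $\xi_k$ is obtained from $\xi_{k-1}$ by a full Lutz twist along a pre-Lagrangian torus that is isotopic in $M$ to $T$: after the first $k-1$ twists the inserted region is a copy of $(T^2\times[0,1],\eta_{2\pi(k-1)})$, and cutting $\xi_{k-1}$ along a linearly foliated (hence pre-Lagrangian) torus $T^2\times\{t_0\}$ inside this region --- which is isotopic to $T$ --- and re-gluing in $(T^2\times[0,1],\eta_{2\pi})$ stacks on one more copy of $\eta_{2\pi}$, producing precisely $\xi_k$. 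In particular the torus used at the $k$-th stage represents $[T]\in H_2(M;\Z)$.

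Applying Theorem~\ref{thm: main} to this twist (with $\xi_{k-1}$ in place of $\xi$ and $\xi_k$ in place of $\xi'$) gives
\[
\underline{c}(M,\xi_k;\M) = p(e^{[T]})\cdot\underline{c}(M,\xi_{k-1};\M)
\]
in $\underline{\widehat{HF}}(-M;\M)/\M^\times$, where $p(t) = t-1$. By hypothesis the $\Z$-algebra homomorphism $\L\to\M$ sends $e^{[T]}$ to $1\in\M$, hence it sends $p(e^{[T]}) = e^{[T]}-1$ to $0\in\M$. Taking $k=n$ and choosing any representative $\tilde c\in\underline{\widehat{HF}}(-M;\M)$ of $\underline{c}(M,\xi_{n-1};\M)$, the product $p(e^{[T]})\cdot\tilde c$ is the zero element; since $\{0\}$ is a single $\M^\times$-orbit this yields $\underline{c}(M,\xi_n;\M) = 0$. (Equivalently, iterating the displayed identity gives $\underline{c}(M,\xi_n;\M) = p(e^{[T]})^n\cdot\underline{c}(M,\xi;\M) = (e^{[T]}-1)^n\cdot\underline{c}(M,\xi;\M)$, which is annihilated in $\M$ for every $n\geq 1$.)

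The argument is essentially formal once Theorem~\ref{thm: main} is in hand; the only point that genuinely needs care --- and the closest thing to an obstacle --- is the identification in the first paragraph, namely that each successive full Lutz twist can be realized along a pre-Lagrangian torus whose homology class is $[T]$, so that the factor appearing at each stage is exactly $p(e^{[T]})$ rather than $p$ of some unrelated class. The remaining ingredients --- that multiplication by an element of $\M$ mapping to $0$ annihilates every module element, and that $0$ is a well-defined element of $\underline{\widehat{HF}}(-M;\M)/\M^\times$ --- are immediate.
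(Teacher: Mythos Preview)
Your proof is correct and is essentially the argument the paper has in mind; the paper does not write out a proof for this corollary, treating it as immediate from Theorem~\ref{thm: main} (noting just before the statement that only divisibility of $p(t)$ by $t-1$ is needed). Your write-up is simply a careful unpacking of this one-line deduction, with appropriate attention to why the torus used at each stage represents $[T]$ and to the well-definedness of $0$ modulo $\M^\times$.
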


In particular, we have the following:

\begin{cor}
If $T$ is a separating pre-Lagrangian torus in $(M,\xi)$, then for
$n>0$: \be
\item $\underline{c}(M,\xi_n;\M)=0$ in $\underline{\widehat{HF}}(-M;\M)/\M^\times$.
\item $(M,\xi_n)$ is not weakly symplectically fillable.
\ee
\end{cor}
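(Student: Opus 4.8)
The plan is to read off both statements from Theorem~\ref{thm: main} (equivalently, from the Corollary stated just above) together with the non-vanishing theorem for weakly fillable contact structures with twisted coefficients. The crucial elementary fact is that a separating torus is null-homologous in $M$: $[T]=0$ in $H_2(M;\Z)$, so $e^{[T]}=e^{0}=1$ in $\L=\Z[H_2(M;\Z)]$, and consequently $e^{[T]}$ is sent to $1\in\M$ under \emph{every} $\Z$-algebra homomorphism $\L\to\M$, i.e., it always acts as the identity.

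For part (1), apply Theorem~\ref{thm: main} to the pair $(M,\xi_{n-1})$, $(M,\xi_n)=(M,\xi_{n-1}')$; here $\xi_n$ is the full Lutz twist of $\xi_{n-1}$ along a pre-Lagrangian torus isotopic to $T$, hence of homology class $[T]=0$. This gives $\underline{c}(M,\xi_n;\M)=p(e^{[T]})\cdot\underline{c}(M,\xi_{n-1};\M)=(e^{[T]}-1)\cdot\underline{c}(M,\xi_{n-1};\M)$, which vanishes for all $n>0$ since $e^{[T]}-1$ acts as $0$ on $\M$. Equivalently, this is the preceding Corollary, whose hypothesis that $e^{[T]}$ act trivially is now automatic.

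For part (2), suppose toward a contradiction that $(M,\xi_n)$ admits a weak symplectic filling $(W,\omega)$ for some $n>0$. By the non-vanishing theorem of Ozsv\'ath--Szab\'o for weak fillings \cite[Theorem 4.2]{OSz3}, there is a $\Z$-algebra $\M$ together with a $\Z$-algebra homomorphism $\L\to\M$ --- the twisted coefficient system determined by $[\omega|_M]\in H^2(M;\R)$ --- for which $\underline{c}(M,\xi_n;\M)\neq 0$ in $\underline{\widehat{HF}}(-M;\M)/\M^\times$. On this $\M$ the element $e^{[T]}$ necessarily acts as the identity, since $[T]=0$ forces its image in $\M$ to be $1$ (indeed the pairing $\langle[\omega|_M],[T]\rangle$ governing the action vanishes). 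Part (1) then gives $\underline{c}(M,\xi_n;\M)=0$, a contradiction; hence $(M,\xi_n)$ is not weakly symplectically fillable.

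The only substantive ingredient is Theorem~\ref{thm: main} itself; everything else is bookkeeping in the group ring $\L$. The one place that deserves a word of care is the interaction with part (2): one must make sure that the coefficient module furnished by the weak-filling non-vanishing theorem really is an $\L$-module on which $e^{[T]}$ acts trivially. This is forced by $[T]$ being null-homologous, so it is automatic --- no choice made in constructing $\M$ can spoil it --- and there is no genuine obstacle here.
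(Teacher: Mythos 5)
Your proof is correct and follows essentially the same route as the paper: part (1) is the observation that a separating torus is null-homologous so $e^{[T]}=1$ and $p(e^{[T]})=p(1)=0$ under any $\Z$-algebra homomorphism $\L\to\M$, and part (2) combines this with the Ozsv\'ath--Szab\'o non-vanishing theorem for weak fillings (Theorem~\ref{thm: weakly fillable}) applied to the coefficient system $\mathcal{M}_{\omega}$. Your write-up simply spells out the contradiction argument that the paper leaves implicit.
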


\begin{proof}
For (1), simply observe that $[T]=0$ if $T$ is separating.  (2)
follows from a result of Ozsv\'ath-Szab\'o on the nonvanishing of
the contact invariant for a weakly symplectically fillable contact
structure.  The precise statement is given as Theorem~\ref{thm:
weakly fillable} in Section~\ref{section: preliminaries}.
\end{proof}

On the other hand, Colin~\cite{Co} and
Honda-Kazez-Mati\'c~\cite{HKM1} have proven that there exist
infinitely many nonisomorphic universally tight contact structures
on a toroidal $M$ with a separating torus $T$, of type $(M,\xi_n)$.
This gives large infinite families of universally tight contact
structures which are not weakly symplectically fillable. Our results
generalize prior examples of Ghiggini~\cite{Gh2}.

\begin{cor}
Let $\L=\Z[H_2(M;\Z)]$. If $\underline{c}(M,\xi;\L)\in
\underline{\widehat{HF}}(-M;\L)/\L^\times$ is nontrivial and
non-torsion, i.e., no nonzero element of $\L$ annihilates
$\underline{c}(M,\xi;\L)$, and $[T]\not=0\in H_2(M;\Z)$, then \be
\item $\xi_n$ has finite torsion along $T$ for $n\geq 0$;
\item $\xi_n$ and $\xi_m$ are pairwise nonisotopic for
$n\not=m$. \ee
\end{cor}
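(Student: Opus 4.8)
The plan is to apply Theorem~\ref{thm: main} with $\M=\L$ and the identity homomorphism $\L\to\L$, and then argue purely in the commutative algebra of $\L$. Since $M$ is closed and oriented, $H_2(M;\Z)\cong H^1(M;\Z)$ is free abelian, so $\L=\Z[H_2(M;\Z)]$ is a Laurent polynomial ring over $\Z$ in finitely many variables: it is a Noetherian integral domain, its units are exactly $\pm e^a$ ($a\in H_2(M;\Z)$), and $\lambda:=e^{[T]}-1$ is a nonzero non-unit because $[T]\neq 0$ (it is not a monomial). Iterating \eqref{equation: main} along a sequence of tori each isotopic to $T$, hence all representing $[T]$, gives $\underline{c}(M,\xi_n;\L)=\lambda^{n}\cdot c_0$ for all $n\geq 0$, where $c_0:=\underline{c}(M,\xi;\L)$. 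By hypothesis $c_0$ has zero annihilator in $\L$; as $\L$ is a domain and $\lambda^n\neq 0$, the element $\lambda^{n}c_0$ also has zero annihilator, so each $\underline{c}(M,\xi_n;\L)$ is nontrivial and non-torsion. (All of this is well defined modulo $\L^\times$: multiplication by a fixed element of $\L$ descends to $\underline{\widehat{HF}}(-M;\L)/\L^\times$, and passing to a unit multiple changes neither whether the orbit is zero nor whether it is torsion.)

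For (1): fix $n\geq 1$ and suppose $\xi_n$ does not have finite torsion along $T$. Since $\xi_n$ plainly has $2\pi$-torsion along $T$ (the inserted piece contains a copy of $\eta_{2\pi}$) and the existence of $2\pi k$-torsion is monotone in $k$, this means $\xi_n$ has $2\pi m$-torsion along $T$ for every $m\geq 1$. For such an $m$, a standard cut-and-reglue along the two boundary tori of an embedded $(T^2\times[0,1],\eta_{2\pi m})\hookrightarrow (M,\xi_n)$ exhibits $(M,\xi_n)$ as the outcome of $m$ successive full Lutz twists, along tori isotopic to $T$, applied to some contact manifold $(M,\zeta_m)$; hence $\underline{c}(M,\xi_n;\L)=\lambda^{m}\cdot\underline{c}(M,\zeta_m;\L)\in\lambda^{m}\,\underline{\widehat{HF}}(-M;\L)$ by Theorem~\ref{thm: main}. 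Thus $\underline{c}(M,\xi_n;\L)$ lies in $\bigcap_{m\geq 1}\lambda^{m}\,\underline{\widehat{HF}}(-M;\L)$, which, since $\underline{\widehat{HF}}(-M;\L)$ is finitely generated over the Noetherian ring $\L$, is annihilated by some element $1+\lambda b$ (Krull's intersection theorem). As $\lambda$ is not a unit, $1+\lambda b\neq 0$, so $\underline{c}(M,\xi_n;\L)$ is torsion --- contradicting the previous paragraph. Hence $\xi_n$ has finite torsion for $n\geq 1$; the identical argument at $n=0$ shows $\xi$ is not infinitely twisted along $T$.

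For (2): if $\xi_n$ and $\xi_m$ were isotopic with $n<m$, their contact invariants would coincide, so $\lambda^{n}c_0=u\,\lambda^{m}c_0$ in $\underline{\widehat{HF}}(-M;\L)$ for some $u\in\L^\times$, i.e.\ $\lambda^{n}\bigl(1-u\lambda^{m-n}\bigr)c_0=0$. Since $c_0$ has zero annihilator, $\lambda^{n}\bigl(1-u\lambda^{m-n}\bigr)=0$ in the domain $\L$, and $\lambda^n\neq 0$ forces $u\lambda^{m-n}=1$; thus $\lambda$ is a unit, contradicting $[T]\neq 0$. So $\xi_n$ and $\xi_m$ are non-isotopic whenever $n\neq m$.

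The iteration of the main theorem and the facts about units of $\L$ are routine; the one point requiring care is the reduction in (1) --- realizing a contact structure with $2\pi m$-torsion as the result of $m$ full Lutz twists, so that Theorem~\ref{thm: main} applies with the same class $[T]$ --- together with the (standard) appeal to Krull's intersection theorem over $\L$. I expect that to be the main, albeit minor, obstacle.
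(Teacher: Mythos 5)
Your proof is correct and follows essentially the same approach as the paper's. The only differences are cosmetic: in part (2) you argue directly with the domain and unit structure of $\L$ where the paper compares Newton polytopes, and in part (1) you cite Krull's intersection theorem where the paper runs the underlying ascending-chain argument by hand on $(c)\subset(c_1)\subset(c_1,c_2)\subset\cdots$.
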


\begin{proof}
(2) Write $\L=\Z[t_1,t_1^{-1},\dots,t_k,t_k^{-1}]$,
$t=e^{[T]}=t_1^{a_1}\dots t_k^{a_k}$ and
$c=\underline{c}(M,\xi;\L)$. If $\underline{c}(M,\xi_n;\L)$ and
$\underline{c}(M,\xi_m;\L)$ are equivalent, then there is some
monomial $\pm t_1^{b_1}\dots t_k^{b_k}$ so that
$$\pm t_1^{b_1}\dots t_k^{b_k}(t-1)^nc=(t-1)^mc.$$ Hence
$\pm t_1^{b_1}\dots t_k^{b_k}(t-1)^n=(t-1)^m$, since $c$ is
non-torsion. Given a Laurent polynomial $f=\sum_{i_1,\dots,i_k}
a_{i_1,\dots,i_k} t_1^{i_1}\dots t_k^{i_k}$, its {\em Newton
polytope} is the convex hull of points $(i_1,\dots,i_k)$ in $\R^k$
for which $a_{i_1,\dots,i_k}\not=0$. By comparing the Newton
polytopes of the two polynomials in the above equation, it is
immediate that equality holds if and only if $n=m$.

(1) If $\xi$ has infinite torsion, then there is a sequence of
elements $c_1,c_2,\dots$ in $\L$ so that $c=(t-1)^i c_i$. (Note that
it is a priori not clear whether $c_i=(t-1)c_{i+1}$.) Observe that
$\L$ is Noetherian, since finitely generated polynomial rings over
$\Z$ are Noetherian, and the Noetherian property survives
localization.  Now consider the ascending chain of $\L$-modules:
$$(c)\subset (c_1)\subset (c_1,c_2)\subset
(c_1,c_2,c_3)\subset\dots.$$ By the ascending chain property, the
chain stabilizes at some point, i.e.,
$(c_1,\dots,c_n)=(c_1,\dots,c_{n+1})$.  Hence $c_{n+1}=\sum_{i=1}^n
f_i(t_1,\dots,t_k) c_i$.  Multiplying both sides by $(t-1)^{n+1}$,
we obtain $c=(t-1)g(t_1,\dots,t_k)c$ for some $g(t_1,\dots,t_k)$.
Since $c$ is non-torsion, it follows that $1=(t-1)g(t)$, a
contradiction. The same holds for $\xi_n$.
\end{proof}

\begin{cor}
Suppose $(M,\xi')$ is obtained from $(M,\xi)$ by a full Lutz twist
along $T\subset M$.  If $\underline{c}(M,\xi';\M)\not=0$, then it
follows that $\underline{c}(M,\xi;\M)\not=0$.
\end{cor}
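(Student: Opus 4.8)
The plan is to read this off immediately from Theorem~\ref{thm: main} by contraposition; there is essentially no work to do beyond Theorem~\ref{thm: main} itself. First I would invoke the identity \eqref{equation: main}, which asserts
\[
\underline{c}(M,\xi';\M) = p(e^{[T]})\cdot \underline{c}(M,\xi;\M), \qquad p(t)=t-1,
\]
as an equality in $\underline{\widehat{HF}}(-M;\M)/\M^\times$. Now suppose, contrary to the desired conclusion, that $\underline{c}(M,\xi;\M)=0$. Multiplication by the fixed ring element $p(e^{[T]})\in\M$ is $\M$-linear, hence sends the zero class to the zero class; therefore the right-hand side vanishes and $\underline{c}(M,\xi';\M)=0$, contradicting the hypothesis. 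Taking contrapositives gives the statement. Note that only the fact that $p(e^{[T]})$ is \emph{some} fixed element of $\M$ is used; the precise form $p(t)=t-1$ plays no role here.

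The only point deserving a word of care is that the equality in Theorem~\ref{thm: main} lives in the quotient $\underline{\widehat{HF}}(-M;\M)/\M^\times$ rather than in $\underline{\widehat{HF}}(-M;\M)$ itself. This causes no difficulty: the $\M^\times$-orbit of the zero class is $\{0\}$, so ``vanishing'' is an unambiguous property of an element of the quotient, and scalar multiplication by $p(e^{[T]})$ descends to the quotient since $u\cdot(p(e^{[T]})x)=p(e^{[T]})\cdot(ux)$ for any unit $u$. Consequently I expect no genuine obstacle: unlike the preceding corollaries, this one requires no Noetherian or non-torsion input, and the entire content is packaged in Theorem~\ref{thm: main}, the corollary amounting to the trivial observation that $p(e^{[T]})\cdot 0 = 0$.
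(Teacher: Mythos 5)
Your proof is correct and is essentially the argument the paper intends (the paper states this corollary without proof precisely because it is an immediate contrapositive of Theorem~\ref{thm: main}). Your added remark about why ``vanishing'' is well-defined in the quotient $\underline{\widehat{HF}}(-M;\M)/\M^\times$ is a sensible bit of care that the paper leaves implicit.
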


In other words, undoing a full Lutz twist preserves nontriviality of
the contact invariant with twisted coefficients.

\s The main technological advance which allows us to prove
Theorem~\ref{thm: main} without undue effort is the gluing/tensor
product map in sutured Floer homology, proved in \cite{HKM3}.
Sutured Floer homology is an important advance due to Andr\'as
Juh\'asz~\cite{Ju1,Ju2}, and is the relative version of Heegaard
Floer hat theory for a sutured manifold $(M,\Gamma)$. One version of
the main result of \cite{HKM3} is the following: If two sutured
manifolds $(M_1,\Gamma_1)$ and $(M_2,\Gamma_2)$ are glued along
their common boundary (for technical reasons, we assume $\bdry M_1$
and $\bdry M_2$ are connected), then there is a natural tensor
product map:
$$SFH(-M_1,-\Gamma_1;\Z)\otimes_{\Z} SFH(-M_2,-\Gamma_2;\Z)\to
\widehat{HF}(-(M_1\cup M_2);\Z).$$  In Section~\ref{section: SFH and
twisted}, we describe the map in more detail and give a version of
it in the setting of twisted coefficients. Then, in
Section~\ref{section: proof of main}, we prove Theorem~\ref{thm:
main}, modulo the determination of the Laurent polynomial $p(t)$.
Finally, we determine the polynomial $p(t)$ in Section~\ref{section:
determination}.

\section{Preliminaries} \label{section:
preliminaries}

\subsection{Twisted coefficients}
In this subsection we review twisted coefficients with respect to a
closed $2$-form and also the work of Ozsv\'ath and Szab\'o on the
contact class of a weakly fillable contact structure~\cite{OSz3}.
The reader is referred to \cite{OSz2} for the definition and
properties of the Heegaard Floer homology groups with twisted
coefficients.  (Also see \cite{JM} for a good summary which
emphasizes twisted coefficients.)

Let $M$ be a closed $3$-manifold and $[\omega]\in H^2(M;\R)$. Then
$[\omega]$ induces an evaluation map (a group homomorphism):
$$\mbox{$\int$}\colon H_2(M;\Z)\to \R, ~~~~~ [A]\mapsto \mbox{$\int_A \omega$},$$
and we have an induced ring
homomorphism of group rings:
$$\Z[H_2(M;\Z)]\to \Z[\R],$$ which makes $\Z[\R]$
into an $\L=\Z[H_2(M;\Z)]$-module.  We write $\mo$ to indicate
$\Z[\R]$ with this $\L$-module structure.

Given $\mathfrak{t}\in \mbox{Spin}^c(M)$, we can define
$\underline{HF}^\circ(M,\mathfrak{t};\mo)$ for any flavor of
Heegaard Floer homology.  The definition for $\underline{HF}^\infty$
is as follows (and the other $\underline{HF}^\circ$ are analogous):
Let $(\Sigma,\alpha,\beta,z)$ be an admissible pointed Heegaard
diagram for $M$ and let $A$ be a surjective additive assignment for
the Heegaard diagram which takes values in $H_2(M;\Z)$, instead of
the usual $H^1(M;\Z)$. Letting
$\underline{CF}^\infty(M,\mathfrak{t};\mo)$ be the free $\mo$-module
generated by pairs $[\mathbf{x},i]$ with $\mathbf{x}\in
\mathbb{T}_\alpha\cap \mathbb{T}_\beta$ representing $\mathfrak{t}$
and $i\in \Z$, the differential is given by:
$$\underline\bdry^\infty([\mathbf{x},i])=\sum_{\mathbf{y}\in\mathbb{T}_\alpha\cup\mathbb{T}_\beta}
\sum_{\tiny\begin{array}{c} \phi\in\pi_2(\mathbf{x},\mathbf{y})\\
\mu(\phi)=1\end{array}} \# (\mathcal{M}(\phi)/\R) \cdot
t^{\int_{A(\phi)}\omega} [\mathbf{y},i-n_z(\phi)].$$

Next let $X$ be a $4$-dimensional cobordism between $3$-manifolds
$M_0$ and $M_1$, $\omega$ be a closed $2$-form on $X$ defining the
cohomology class $[\omega] \in H^2(X; \R)$, and $\mathfrak{s}$ be a
Spin$^c$ structure on $X$. Then we have maps
\begin{equation} \label{cobordism map} \underline{F}_{X, \mathfrak{s};\mathcal{M}_\omega}^\circ \colon
\underline{HF}^\circ(M_0,\mathfrak{s}|_{M_0}; {\mathcal
M}_{\omega|_{M_0}}) \to
\underline{HF}^\circ(M_1,\mathfrak{s}|_{M_1}; {\mathcal
M}_{\omega|_{M_1}}).\footnote{Our definition of the map looks
slightly different from that given on pp. 323--324 of \cite{OSz3},
but is equivalent to it.}
\end{equation}

In order to define the map for $\underline{HF}^\infty$, we need to
introduce some notation. Let $(\Sigma, {\mathbf \alpha},
\mathbb{\beta}, \mathbb{\gamma}, z)$ be an admissible pointed triple
Heegaard diagram for $X$ so that $(\Sigma, \alpha, \beta,z)$ is a
Heegaard diagram for $M_0$, $(\Sigma, \alpha, \gamma,z)$ is a
Heegaard diagram for $M_1$, and $(\Sigma, \beta, \gamma,z)$ is a
Heegaard diagram for a connected sum of $(S^1 \times S^2)$'s. There
is a canonical intersection point $\mathbf{\Theta} \in
\mathbb{T}_{\beta} \cap \mathbb{T}_{\gamma}$, defined by choosing
the intersection point with lowest relative Maslov index for any
pair of parallel curves $\beta_i$ and $\gamma_i$. We denote by
$\pi_2(\mathbf{x},\mathbf{ \Theta}, \mathbf{y})$ the homotopy
classes of Whitney triangles connecting the intersection points
$\mathbf{x} \in \mathbb{T}_{\alpha} \cap \mathbb{T}_{\beta}$,
$\mathbf{\Theta} \in \mathbb{T}_{\beta} \cap \mathbb{T}_{\gamma}$,
and $\mathbf{y} \in \mathbb{T}_{\alpha} \cap \mathbb{T}_{\gamma}$.
Next, let $A_i$, $i=0,1$, be a surjective additive assignment for
the Heegaard diagram for $M_i$.   Let $\psi_{\mathfrak{s}}\in
\pi_2(\mathbf{x},\mathbf{ \Theta}, \mathbf{y})$ be a fixed
representative of $\mathfrak{s}$. If $\psi \in \pi_2(\mathbf{x'},
\mathbf{\Theta}, \mathbf{y'})$ represents the same ${\rm Spin}^c$
structure $\mathfrak{s}$, then there are Whitney disks
$\phi_{\mathbf{x}',\mathbf{x}}$ and $\phi_{\mathbf{y},\mathbf{y'}}$
so that $\psi= \psi_{\mathfrak{s}} * \phi_{\mathbf{x'},\mathbf{x}} *
\phi_{\mathbf{y},\mathbf{y'}}$, where $*$ denotes concatenation.
(See \cite[Proposition 8.5]{OSz1}.) Then define
$$A_X(\psi)=
\delta(-A_0(\phi_{\mathbf{x'},\mathbf{x}})+A_1(\phi_{\mathbf{y},\mathbf{y'}})),$$
where
$$\delta: H^1(\bdry X)\rightarrow H^2(X,\bdry X)$$
is the coboundary map of the long exact sequence of $(X,\bdry X)$.

We can now define the map
$\underline{F}_{X,\mathfrak{s};\mathcal{M}_\omega}^\infty$ in
Equation~\ref{cobordism map} as follows:
\begin{equation}
\underline{F}_{X,\mathfrak{s};\mathcal{M}_\omega}^\infty
([\mathbf{x}, i]) = \sum_{\mathbf{y} \in \mathbb{T}_{\alpha} \cap
\mathbb{T}_{\gamma}}
\sum_{\tiny \begin{array}{c} \psi \in \pi_2(\mathbf{x}, \mathbf{\Theta}, \mathbf{y}) \\
\psi \mbox{ represents } \mathfrak{s}\\ \mu(\psi)=0
\end{array}} \# {\mathcal M}(\psi) \cdot \ t^{\int_{A_X(\psi)}\omega}
[\mathbf{y}, i-n_z(\psi)].
\end{equation}
Here the map
$\underline{F}_{X,\mathfrak{s};\mathcal{M}_\omega}^\infty$ depends
on the choice of the reference triangle $\psi_{\mathfrak{s}}$, and
changing this choice has the effect of pre-composing (and
post-composing)
$\underline{F}_{X,\mathfrak{s};\mathcal{M}_\omega}^\infty$ by an
element of $H^1(M_0)$ (and an element of $H^1(M_1)$). The
definitions for the other
$\underline{F}_{X,\mathfrak{s};\mathcal{M}_\omega}^\circ$ are
analogous.

Let $\mathfrak{t}_i\in \mbox{Spin}^c(M_i)$ for $i=0,1$.  Let
$S(\mathfrak{t}_0,\mathfrak{t}_1)$ be the set of all Spin$^c$
structures $\mathfrak{s}\in \mbox{Spin}^c(X)$ which restrict to
$\mathfrak{t}_i$ on $M_i$.  Choose a reference Spin$^c$-structure
$\mathfrak{s}_0\in S(\mathfrak{t}_0,\mathfrak{t}_1)$ and choose
$\psi_{\mathfrak{s}}\in
\pi_2(\mathbf{x},\mathbf{\Theta},\mathbf{y})$ for all
$\mathfrak{s}\in S(\mathfrak{t}_0,\mathfrak{t}_1)$, where
$\mathbf{x}$ and $\mathbf{y}$ are the same for all $\mathfrak{s}$.
When summing over $S(\mathfrak{t}_0,\mathfrak{t}_1)$ we form:
$$ \underline{F}_{X,S(\mathfrak{t}_0,\mathfrak{t}_1);\mathcal{M}_\omega}^+=\sum_{\mathfrak{s}\in
\tiny S(\mathfrak{t}_0,\mathfrak{t}_1)}
\underline{F}_{X,\mathfrak{s};\mathcal{M}_\omega}^+
t^{\int_{\mathcal{D}(\psi_{\mathfrak{s}}-\psi_{\mathfrak{s}_0})}\omega}.$$
Here $\mathcal{D}(\psi_{\mathfrak{s}}-\psi_{\mathfrak{s}_0})$ is a
$2$-cycle in $X$ corresponding to the triply-periodic domain
$\psi_{\mathfrak{s}}-\psi_{\mathfrak{s}_0}$.

Now, the composition law \cite[Theorem~3.9]{OSz:triangles} can be
stated as follows, for $\mathcal{M}_\omega$-coefficients:

\begin{thm}[Composition Law] \label{thm: comp law}
Let $X=X_1\cup_{M_1} X_2$ be a composition of cobordisms $X_1$ from
$M_0$ to $M_1$ and $X_2$ from $M_1$ to $M_2$. If $\omega$ is a
closed $2$-form on $X$, then
\begin{eqnarray*}
\underline{F}_{X_2,\mathfrak{s}_2;\mathcal{M}_\omega}^+\circ
\underline{F}_{X_1,\mathfrak{s}_1;\mathcal{M}_\omega}^+
&=&
\sum_{\tiny
\begin{array}{c}
\mathfrak{s}\in\mbox{Spin}^c(X)\\
\mathfrak{s}|_{X_1}=\mathfrak{s}_1,
\mathfrak{s}|_{X_2}=\mathfrak{s}_2
\end{array}}\underline{F}_{X,\mathfrak{s};\mathcal{M}_\omega}^+
t^{\int_{\mathcal{D}(\psi_{\mathfrak{s}}-\psi_{\mathfrak{s}_0})}\omega}
\\
&=& \sum_{\tiny
\begin{array}{c}
\mathfrak{s}\in\mbox{Spin}^c(X)\\
\mathfrak{s}|_{X_1}=\mathfrak{s}_1,
\mathfrak{s}|_{X_2}=\mathfrak{s}_2
\end{array}}\underline{F}_{X,\mathfrak{s};\mathcal{M}_\omega}^+ t^{\langle \omega\cup
(\mathfrak{s}-\mathfrak{s}_0),[X]\rangle},
\end{eqnarray*}
where $\mathfrak{s}_0$ is a reference Spin$^c$ structure on $X$
which restricts to $\mathfrak{s}_1$ on $X_1$ and $\mathfrak{s}_2$ on
$X_2$.\footnote{A related formula is given on p.\ 325 of
\cite{OSz3}, but the term $c_1(\mathfrak{s})$ which appears there
should be replaced by $\mathfrak{s}-\mathfrak{s}_0$.}
\end{thm}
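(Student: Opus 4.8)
The plan is to reduce the Composition Law for $\mathcal{M}_\omega$-coefficients to the known untwisted Composition Law of \cite{OSz:triangles} by a careful bookkeeping argument on the weights $t^{\int}$. First I would fix compatible pointed triple Heegaard diagrams: one for $X_1$ realizing $M_0,M_1$, one for $X_2$ realizing $M_1,M_2$, and their concatenation for $X=X_1\cup_{M_1}X_2$, arranged so that the Heegaard surface and curves for the common $M_1$ agree, and so that the quadruple diagram for $X$ makes sense. I would then recall that the composite count $\underline{F}^+_{X_2,\mathfrak{s}_2}\circ\underline{F}^+_{X_1,\mathfrak{s}_1}$ is computed, as in the untwisted case, by gluing a $\psi_1\in\pi_2(\mathbf{x},\mathbf{\Theta}_1,\mathbf{w})$ to a $\psi_2\in\pi_2(\mathbf{w},\mathbf{\Theta}_2,\mathbf{y})$ along $\mathbf{w}$ to obtain a rectangle, degenerating it, and thereby identifying the moduli count with the count of triangles $\psi\in\pi_2(\mathbf{x},\mathbf{\Theta},\mathbf{y})$ for the composite cobordism, with $\mu(\psi)=0$, grouped by the Spin$^c$ structure $\mathfrak{s}\in\mathrm{Spin}^c(X)$ they represent.

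The new content is the tracking of the exponents. For each such composite triangle $\psi = \psi_1 * \psi_2$ (modulo the standard small-triangle corrections), I would show that the additive assignment $A_X$ for the composite cobordism satisfies $\int_{A_X(\psi)}\omega = \int_{A_{X_1}(\psi_1)}\omega|_{X_1} + \int_{A_{X_2}(\psi_2)}\omega|_{X_2}$, up to the reference-triangle ambiguities in $H^1(M_0)$ and $H^1(M_2)$; this is where the definition of $A_X$ via the coboundary map $\delta\colon H^1(\bdry X)\to H^2(X,\bdry X)$ has to be unwound, using the Mayer--Vietoris sequence for $X=X_1\cup_{M_1}X_2$ and the compatibility of the additive assignments $A_0,A_1,A_2$ across $M_1$. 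Once this additivity is in place, the remaining task is purely homological: to identify the residual discrepancy between $\sum_{\mathfrak{s}}$ (after fixing $\psi_{\mathfrak{s}_1},\psi_{\mathfrak{s}_2}$ for the pieces but a possibly different $\psi_{\mathfrak{s}_0}$ for $X$) with the claimed weight $t^{\int_{\mathcal{D}(\psi_{\mathfrak{s}}-\psi_{\mathfrak{s}_0})}\omega}$, and then to rewrite that period as the cup-product pairing $\langle\omega\cup(\mathfrak{s}-\mathfrak{s}_0),[X]\rangle$ via Poincar\'e duality, noting $\mathfrak{s}-\mathfrak{s}_0\in H^2(X;\Z)$ so that the cup product with $[\omega]\in H^2(X;\R)$ lands in $H^4(X;\R)$ and pairs with $[X]$.

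I would organize the write-up as: (1) set up the diagrams and reduce, verbatim from the untwisted proof, the moduli-space count; (2) prove the additivity of $\int_{A_X(\psi)}\omega$ over the concatenation; (3) reconcile the reference Spin$^c$ structures and pass from the period description to the cup-product description. The main obstacle I anticipate is step (2): the definition of $A_X$ depends on a choice of reference triangle $\psi_{\mathfrak{s}}$ and on how the additive assignments on the three pieces are glued, and showing that the $\omega$-periods add correctly requires carefully matching the coboundary maps $\delta$ for $(X_1,\bdry X_1)$, $(X_2,\bdry X_2)$, and $(X,\bdry X)$, keeping track of the $H^1(M_1)$ that is "internal" to $X$ and cancels in the gluing. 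The ambiguities living in $H^1(M_0)$ and $H^1(M_2)$ are exactly those that also affect $\underline{F}^+_{X_1}$ and $\underline{F}^+_{X_2}$ individually, so they are harmless; the point is that nothing extra is introduced by the gluing, which is a diagram chase in Mayer--Vietoris once the geometric additivity $\psi = \psi_1 * \psi_2$ is granted.
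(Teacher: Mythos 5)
Your proposal is correct and follows essentially the same route as the paper's proof, which simply invokes the untwisted composition law of \cite{OSz:triangles} and then records the single identification $\int_{\mathcal{D}(\psi_{\mathfrak{s}}-\psi_{\mathfrak{s}_0})}\omega = \langle [\omega]\cup(\mathfrak{s}-\mathfrak{s}_0), [X]\rangle$ (your step (3), via the argument of \cite[Proposition~8.5]{OSz1}) as the only new twisted-coefficients ingredient. The Mayer--Vietoris bookkeeping of $\omega$-periods across the concatenation that you single out as the main obstacle in step (2) is exactly what the paper compresses into the phrase ``follows immediately.''
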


\begin{proof}
The proof follows immediately from \cite{OSz:triangles}, after the
following consideration: Suppose $\psi_{\mathfrak{s}},
\psi_{\mathfrak{s}_0}\in
\pi_2(\mathbf{x},\mathbf{\Theta},\mathbf{y})$ correspond to
Spin$^c$-structures $\mathfrak{s},\mathfrak{s}_0\in
\mbox{Spin}^c(X)$. Then
\[ \int_{\mathcal{D}(\psi_{\mathfrak{s}}-\psi_{\mathfrak{s}_0})}\omega  =
\langle [\omega], PD( \mathfrak{s}-\mathfrak{s}_0) \rangle = \langle
[\omega]\cup(\mathfrak{s}-\mathfrak{s}_0), [X] \rangle,
\]
by the argument in the proof of \cite[Proposition 8.5]{OSz1}.
\end{proof}

The following result is proved in \cite[Theorem~4.2]{OSz3}, using
considerations in the above paragraphs:

\begin{thm}[Ozsv\'ath-Szab\'o] \label{thm: weakly fillable}
Let $(X,\omega)$ be a weak symplectic filling of a contact
$3$-manifold $(M,\xi)$.  Then the contact invariant
$\underline{c}(M,\xi;\mo)$ is nontrivial and non-torsion over $\mo$.
\end{thm}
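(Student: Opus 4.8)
\emph{Overall strategy.} The plan is to deduce the theorem from the non-vanishing of the Heegaard Floer invariant of a closed symplectic four-manifold, by capping off the weak filling. What makes the \emph{non-torsion} conclusion possible (and not just non-triviality) is that the relevant closed invariant, computed with $\mathcal{M}_\Omega$-coefficients, is a nonzero element of an integral domain, and that the contact invariant maps onto it under module homomorphisms.

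\emph{Step 1: capping off.} By a theorem of Eliashberg (in the form refined by Etnyre and Honda), a weak symplectic filling $(X,\omega)$ of $(M,\xi)$ embeds symplectically into a closed symplectic four-manifold $(Z,\Omega)$ with $\Omega|_X=\omega$; after a standard modification of $Z$ carried out away from $X$ (e.g.\ a symplectic fiber sum with an auxiliary closed symplectic manifold) I may also assume $b_2^+(Z)\geq 2$. Write $P=Z\setminus\mathrm{int}(X)$, a concave cap of $(M,\xi)$. Since $\Omega$ restricts to $\omega$ on $X$, we have $[\Omega|_M]=[\omega|_M]$, so the $\L$-module structure on $\mathcal{M}_{\omega|_M}$ used to define $\underline c(M,\xi;\mathcal{M}_{\omega|_M})$ is exactly the restriction to $H_2(M;\Z)$ of the $\Z[H_2(Z;\Z)]$-module structure on $\mathcal{M}_\Omega$ induced by $\Omega$; this compatibility is what lets the cut-and-paste below make sense over $\mathcal{M}_\omega$.

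\emph{Step 2: factorization and conclusion.} Remove a ball from each of $X$ and $P$ to write $Z\setminus(B^4\sqcup B^4)=W_1\cup_M W_2$ with $W_1\colon S^3\to M$ and $W_2\colon M\to S^3$. On the one hand, the construction of the contact invariant and its naturality under symplectic cobordisms (the exact case is in \cite{OSz1}, and the general weak case is packaged by the $\mathcal{M}_\omega$-twisted cobordism maps together with the summation over $\mathrm{Spin}^c$ structures restricting to $\mathfrak{s}_\xi$ described above) identify, up to a unit of $\mathcal{M}_{\omega|_M}$, the image of the distinguished generator of $\underline{HF}^+(S^3)$ under $\underline F^+_{W_1;\mathcal{M}_{\omega|_M}}$ with $\underline c(M,\xi;\mathcal{M}_{\omega|_M})$. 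On the other hand, composing with $W_2$ and applying the Composition Law (Theorem~\ref{thm: comp law}) computes the $\mathcal{M}_\Omega$-twisted closed invariant $\underline\Phi_{Z,\mathfrak{s}_\Omega;\mathcal{M}_\Omega}$ for the canonical $\mathrm{Spin}^c$ structure $\mathfrak{s}_\Omega$. Now I would invoke the Heegaard Floer analogue of Taubes' theorem for symplectic four-manifolds, in its twisted form: $\underline\Phi_{Z,\mathfrak{s}_\Omega;\mathcal{M}_\Omega}$ is a nonzero element of $\mathcal{M}_\Omega=\Z[\R]$ (indeed a monomial, but for us it only matters that it is nonzero). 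Since $\R$ is torsion free, $\Z[\R]$ is an integral domain; and $\underline\Phi_{Z,\mathfrak{s}_\Omega;\mathcal{M}_\Omega}$ is obtained from $\underline c(M,\xi;\mathcal{M}_{\omega|_M})$ by applying $\mathcal{M}_{\omega|_M}$-module homomorphisms, so any $f\in\mathcal{M}_{\omega|_M}$ with $f\cdot\underline c(M,\xi;\mathcal{M}_{\omega|_M})=0$ would force $f\cdot\underline\Phi_{Z,\mathfrak{s}_\Omega;\mathcal{M}_\Omega}=0$, hence $f=0$. This gives non-triviality and non-torsion simultaneously.

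\emph{Main obstacle.} The hard part is not conceptual but the $\mathrm{Spin}^c$-structure and twisting bookkeeping in Step~2. In untwisted coefficients the contribution of $\mathfrak{s}_\Omega$ to $\Phi_Z$ can be cancelled once the closed invariant is cut along $M$ and reassembled (and, for torsion $\xi$, it is). The role of the $\mathcal{M}_\Omega$-coefficients is exactly that the various $\mathrm{Spin}^c$ structures contributing to the cut-open invariant lie in genuinely different ``degrees'', since $\int_{\mathcal D(\psi_{\mathfrak s}-\psi_{\mathfrak s_\Omega})}\Omega\neq 0$ whenever $\mathfrak s\neq\mathfrak s_\Omega$ carries a nontrivial contribution (by the symplectic adjunction-type estimates used in \cite{OSz3}), so no cancellation occurs. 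Verifying this non-cancellation, together with checking that the capping in Step~1 can be arranged so that $\omega$ is unchanged near $M$ and the two module structures agree on the nose — so that $\underline F^+_{W_1;\mathcal{M}_\omega}$ and $\underline F^+_{W_2;\mathcal{M}_\omega}$ compose as asserted — is the technical heart of the argument.
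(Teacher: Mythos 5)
The paper does not prove this theorem: it is attributed verbatim to \cite[Theorem~4.2]{OSz3}, so there is no in-paper argument to compare your sketch against. Your sketch does capture the structure of the Ozsv\'ath--Szab\'o argument: cap the weak filling to a closed symplectic $(Z,\Omega)$ with $b_2^+\geq 2$, cut $Z$ along $M$, factor the twisted closed invariant $\Phi_{Z;\mathcal{M}_\Omega}\in\Z[\R]$ through the contact class by $\Z[\R]$-module maps, and use that $\Z[\R]$ is an integral domain and $\Phi_{Z;\mathcal{M}_\Omega}\neq 0$ to get non-triviality and non-torsion at once. Two imprecisions are worth flagging. First, the map you attach to the concave cap (minus a ball) should be the mixed map $\underline F^{\mathrm{mix}}$ from $\underline{HF}^-(S^3)$ to $\underline{HF}^+$ of the middle level, not an $\underline F^+$; this is precisely what the hypothesis $b_2^+>1$ buys, and is how the present paper runs the analogous decomposition in Section~\ref{section: determination}. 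Second, the identification of the image of the $S^3$-generator with the contact invariant on the nose (rather than the contact class plus correction terms) depends on the Lefschetz-fibration model of the cap from \cite{OSz3}, and orientations must be tracked carefully since $c(\xi)$ lives in $\underline{\widehat{HF}}(-M)$, not $\underline{\widehat{HF}}(M)$. Your ``main obstacle'' paragraph correctly singles out the genuine crux: that $\mathfrak{s}_\Omega$ satisfies $\Phi_{Z,\mathfrak{s}_\Omega}=\pm 1$ and realizes the extremal pairing with $[\Omega]$ among $\mathrm{Spin}^c$ structures with nonzero invariant (a symplectic Thom conjecture input), so that the $\Z[\R]$-weighted sum over $\mathrm{Spin}^c$ structures has a unit leading coefficient and therefore can neither vanish nor become torsion. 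That is not a formal consequence of the composition law and is the actual content of \cite{OSz3}.
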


For our purposes, we are interested in the contact structures
$(T^3,\xi_n)$, $n\in \Z^{\geq 0}$, defined as follows:  Let
$T^3\cong \R^3/\Z^3$ with coordinates $x,y,z$, and let $$\xi_n= \ker
(dz + \varepsilon (\cos (2\pi nz) dx - \sin (2\pi nz) dy)),$$ for
$\varepsilon>0$ small. The contact structures $(T^3,\xi_n)$ can be
weakly filled by $X=D^2\times T^2$ with the product symplectic
structure $\omega=\omega_{D^2}+ dx\land dy$.  Here $\bdry D^2$ is
parametrized by the $z$-coordinate of $T^3$.  Since the pullback of
$\omega$ to $T^3$ is $dx\land dy$, it follows that the image of
$\phi_{[\omega]}$ in $\Z[\R]$ is isomorphic to $\M=\Z[t,t^{-1}]$,
where $[T]$ is the homology class of the torus $dz=const$ and
$t=e^{[T]}$. Hence:

\begin{prop} \label{prop: nonzero}
The contact invariant $\underline{c}(T^3,\xi_n;\M)\in
\underline{\widehat{HF}}(-T^3;\M)$ is nonzero and non-torsion over
$\M$.
\end{prop}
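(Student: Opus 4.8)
The plan is to leverage the fact, recorded in the paragraph preceding the proposition, that $(T^3,\xi_n)$ admits the weak symplectic filling $(X,\omega)=(D^2\times T^2,\,\omega_{D^2}+dx\land dy)$ whose restriction to $T^3=\bdry X$ is $dx\land dy$, so that the evaluation homomorphism $\int\colon H_2(T^3;\Z)\to\R$ has image the cyclic group $\Z c$ with $c=\int_T dx\land dy\neq 0$. Consequently the induced ring homomorphism $\L\to\mo=\Z[\R]$ factors as $\L\twoheadrightarrow\M\hookrightarrow\mo$, where $\M=\Z[t,t^{-1}]$ with $t=e^{[T]}$ and the second arrow is the inclusion of group rings $\Z[\Z c]\hookrightarrow\Z[\R]$ induced by $\Z c\hookrightarrow\R$. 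I would first record two elementary facts about this inclusion: it is injective, and, upon choosing a set of coset representatives for $\R/\Z c$, it exhibits $\mo$ as a free — hence flat — $\M$-module.

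Next I would observe that the twisted differential on $\underline{CF}(-T^3;\mo)$ only involves coefficients $t^{\int_{A(\phi)}\omega}$ lying in $\M\subset\mo$, so that $\underline{CF}(-T^3;\mo)=\underline{CF}(-T^3;\M)\otimes_\M\mo$ and, by flatness of $\mo$ over $\M$, $\underline{\widehat{HF}}(-T^3;\mo)\cong\underline{\widehat{HF}}(-T^3;\M)\otimes_\M\mo$. Since the contact cocycle is the same distinguished generator in the two complexes, under this isomorphism $\underline{c}(T^3,\xi_n;\mo)$ corresponds to $\underline{c}(T^3,\xi_n;\M)\otimes 1$ (up to a unit). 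Then I would invoke Theorem~\ref{thm: weakly fillable}: from the weak filling $(X,\omega)$ it follows that $\underline{c}(T^3,\xi_n;\mo)$ is nonzero and non-torsion over $\mo$. Descending: if $\underline{c}(T^3,\xi_n;\M)=0$ then so is its image in $\underline{\widehat{HF}}(-T^3;\mo)$, a contradiction; and if $f\cdot\underline{c}(T^3,\xi_n;\M)=0$ for some nonzero $f\in\M$, then $f\cdot\underline{c}(T^3,\xi_n;\mo)=0$ while $f\neq 0$ in $\mo$ by injectivity of $\M\hookrightarrow\mo$, again a contradiction. This yields the proposition.

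I expect the only genuine subtlety to be the naturality claim that extension of scalars along $\M\hookrightarrow\mo$ carries the contact class over $\M$ to the contact class over $\mo$; this reduces to the statement that both are represented by the same generator (the contact cocycle coming from a supporting open book, or equivalently the Ozsv\'ath--Szab\'o contact generator) in chain complexes differing only in their coefficient rings. Should one wish to sidestep even this, an alternative is to split $\M\hookrightarrow\mo$ as $\M$-modules (send the chosen coset representatives of $\R/\Z c$ other than $0$ to $0$), producing a retraction $\underline{\widehat{HF}}(-T^3;\mo)\to\underline{\widehat{HF}}(-T^3;\M)$ that sends $\underline{c}(T^3,\xi_n;\mo)$ to $\underline{c}(T^3,\xi_n;\M)$, from which nonvanishing and non-torsionness of the latter follow immediately from those of the former. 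Everything else is routine commutative algebra.
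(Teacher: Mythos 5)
Your proposal follows exactly the route the paper takes: invoke the weak filling $(D^2\times T^2,\omega_{D^2}+dx\wedge dy)$ and Theorem~\ref{thm: weakly fillable} to get nonvanishing and non-torsionness over $\mo$, then descend to $\M$ by observing that $\mo$ is free over $\M$ and that the contact cocycle is compatible with the change-of-coefficients map $\underline{\widehat{HF}}(-T^3;\M)\otimes_\M\mo\to\underline{\widehat{HF}}(-T^3;\mo)$. The retraction variant you sketch at the end is a mild repackaging of the same freeness observation, not a genuinely different argument.
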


We note that there is a slight difference between $\M$ and $\mo$.
There are two ways around this: either assume $[\omega]$ lives in
$H^2(M;\Q)$ after perturbation, or observe that $\mo$ is a free
$\M$-module. In the latter case, $\underline{c}(T^3,\xi_n;\mo)$ is
the image of $\underline{c}(T^3,\xi_n;\M)\otimes 1$ under the tensor
product map
$$\underline{\widehat{HF}}(-T^3;\M)\otimes_{\M}\mo\to
\underline{\widehat{HF}}(-T^3;\mo),$$ and the nonzero/non-torsion
properties of $\underline{c}(T^3,\xi_n;\mo)$ imply the corresponding
properties for $\underline{c}(T^3,\xi_n;\M)$.

\subsection{Change of coefficients} \label{subsection: change of
coefficients}

We now briefly review the change-of-coefficients spectral sequence,
which will be used extensively throughout this paper. Let $\L$ be a
ring, $\M$ be an $\L$-module, and $(C_*,
\partial)$ be a complex of $\L$-modules. For technical reasons we
will assume that each $C_i$ is a free $\L$-module and there are only
finitely many degrees $i$ for which $C_i$ is nonzero.

The relationship between the homology of $C=(C_*,\partial)$ and the
homology of $C \otimes \M =(C_* \otimes_{\L} \M, \partial \otimes
1)$ is given by the change-of-coefficients spectral sequence.
Consider a free resolution of $\M$:
\[ \ldots \stackrel{f_{n+1}} \longrightarrow {\mathbb F}_n
\stackrel{f_n} \longrightarrow \ldots \stackrel{f_1} \longrightarrow
{\mathbb F}_0  \longrightarrow \M \longrightarrow 0. \]  Then the
double complex $(C_* \otimes_{\L} {\mathbb F}_*, 1 \otimes f_*,
\partial_* \otimes 1)$ gives rise to the spectral sequence
\begin{equation} \label{change-coefficients}
E^2_{i,j}= {\rm Tor}^i_{\L}(H_j(C), \M) \Longrightarrow H_{i+j}(C
\otimes \M),
\end{equation}
where its differentials map $d_k \colon  E^k_{i,j} \to
E^k_{i-k,j+k-1}$. The convergence of the spectral sequence must be
interpreted in the sense that $\bigoplus \limits_{i+j=n}
E^{\infty}_{i,j}$ is the graded module associated to the filtration
on $H_n(C \otimes \M)$ induced by the double complex. For details,
we refer the reader to \cite{Mc}.

\s\n {\bf Example.} Suppose $\L$ is a principal ideal domain (PID).
Then any finitely generated $\L$-module $\M$ is a direct sum whose
summands are of the form $\L/(p)$, $p\in \L$. Hence each $\L/(p)$
admits a free resolution
$$ 0\rightarrow \L\stackrel{p}\rightarrow \L \rightarrow \L/(p)\rightarrow 0,$$
and ${\rm Tor}^i_{\L}(H_j(C), \M) =0$ for all $i \geq 2$. Hence the
$E^2$-term of the spectral sequence consists only of two adjacent
nonzero columns, i.e., the $0$th and $1$st. Since $d_k$ decreases
$i$ by $k$, all differentials $d_k$ with $k\geq 2$ are trivial and $
E^2_{i,j} \cong E^{\infty}_{i,j}$. In this case the convergence of
the spectral sequence means that there is an exact sequence
\begin{equation} \label{eqn: tor for PID}
0 \to H_n(C) \otimes \M  \to H_n(C \otimes \M) \to {\rm
Tor}^1_{\L}(H_{n-1}(C), \M) \to 0.
\end{equation}

\s Returning to the general discussion, observe that there is a
natural map:
\begin{equation} \label{change}
\psi\colon H_i(C) \otimes \M \to H_i(C \otimes \M),
\end{equation}
$$[a]\otimes m\rightarrow [a\otimes m].$$

\begin{lemma} \label{lemma: coefficient-naturality}
Let $\M$ be a ring, whose $\L$-module structure is induced by a ring
homomorphism $\L{} \to \M$. If $(M, \xi)$ is a contact manifold,
then the contact invariant $\underline{c}(\xi; \L) \in
\underline{\widehat{HF}}(-M; \L)/ \L^{\times}$ is mapped to the
contact invariant $\underline{c}(\xi; \M) \in
\underline{\widehat{HF}} (-M; \M)/ \M^{\times}$ by the natural map
$$\underline{\widehat{HF}}(-M; \L) \to \underline{\widehat{HF}} (-M;
\M),$$
$$ [a]\mapsto \psi([a]\otimes 1).$$
\end{lemma}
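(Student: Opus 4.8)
The plan is to unwind the definition of the contact invariant at the chain level and check that the chain map inducing $\psi$ carries the distinguished cycle over $\L$ to the distinguished cycle over $\M$, up to a unit. Recall that the contact invariant $\underline{c}(\xi;\L)$ is defined via an open book decomposition of $(M,\xi)$: one fixes a pointed Heegaard diagram adapted to the open book, together with the distinguished intersection point $\mathbf{x}_{\xi}\in\mathbb{T}_{\alpha}\cap\mathbb{T}_{\beta}$ coming from the convex neighborhood of the binding, and $\underline{c}(\xi;\L)$ is the class $[\mathbf{x}_{\xi}\otimes 1]$ in $\underline{\widehat{HF}}(-M;\L)$, well-defined up to multiplication by $\L^{\times}$ (the ambiguity coming from the choice of additive assignment and orientation conventions). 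The first step is therefore to recall that the twisted chain complex $\underline{\widehat{CF}}(-M;\L)$ is a free $\L$-module on the intersection points $\mathbf{x}\in\mathbb{T}_{\alpha}\cap\mathbb{T}_{\beta}$, and that tensoring down along $\L\to\M$ gives exactly $\underline{\widehat{CF}}(-M;\M)=\underline{\widehat{CF}}(-M;\L)\otimes_{\L}\M$, with the differential being the base change of the twisted differential; this is immediate from the formula for the twisted differential, since the multiplicities $\#(\mathcal{M}(\phi)/\R)\cdot t^{\int_{A(\phi)}\omega}$ (equivalently, group-ring elements $e^{A(\phi)}\in\L$) are sent to their images in $\M$.

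Next I would identify the map in the statement. Since each $\underline{\widehat{CF}}_i(-M;\L)$ is a free $\L$-module, the natural chain-level map $\underline{\widehat{CF}}(-M;\L)\otimes_{\L}\M\to\underline{\widehat{CF}}(-M;\M)$ is an isomorphism of complexes, and on homology it realizes the map $\psi$ of Equation~\eqref{change} precisely because freeness kills the $\mathrm{Tor}^1$ term in the change-of-coefficients spectral sequence \eqref{change-coefficients} at the chain level. So the claim reduces to the identity
\[
\psi\bigl([\mathbf{x}_{\xi}\otimes 1_{\L}]\otimes 1_{\M}\bigr)=[\mathbf{x}_{\xi}\otimes 1_{\M}]
\]
in $\underline{\widehat{HF}}(-M;\M)$, which holds on the nose at the chain level: the generator $\mathbf{x}_{\xi}\otimes 1_{\L}$ of $\underline{\widehat{CF}}(-M;\L)$ maps to $\mathbf{x}_{\xi}\otimes 1_{\M}$, the corresponding generator of $\underline{\widehat{CF}}(-M;\M)$.

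The only genuine point requiring care, and the step I expect to be the main obstacle, is the bookkeeping of the $\L^{\times}$ versus $\M^{\times}$ indeterminacy: one must check that the distinguished cycle over $\M$ used to define $\underline{c}(\xi;\M)$ is computed from a compatible choice of open book, Heegaard diagram, and additive assignment—i.e., that we may use \emph{the same} adapted data over $\M$ as over $\L$, so that the base-change map sends the preferred $\L$-representative to a preferred $\M$-representative rather than to some unpredictable $\M^{\times}$-multiple of it. This is the content of the footnote in the introduction about needing a preferred element of $\L$ for each intersection point; since $\M$ is an $\L$-module via an \emph{algebra} homomorphism, the preferred elements of $\L$ push forward to preferred elements of $\M$, and the image of the $\L^{\times}$-orbit of $\mathbf{x}_{\xi}\otimes 1_{\L}$ lands inside the $\M^{\times}$-orbit of $\mathbf{x}_{\xi}\otimes 1_{\M}$ because $\L\to\M$ carries $\L^{\times}$ into $\M^{\times}$. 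Assembling these observations gives the lemma. Invariance of the construction under change of Heegaard data (so that the statement is independent of the auxiliary choices) then follows from the corresponding naturality statements for the twisted Heegaard Floer groups, for which I would cite \cite{OSz2,OSz3}.
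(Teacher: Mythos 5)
Your proposal is correct and takes essentially the same approach as the paper: the paper's one-sentence proof simply observes that the contact invariant is represented by the same intersection point regardless of the coefficient system and is a cycle because there are no holomorphic strips emanating from it, which is precisely the chain-level observation you unwind in detail (including the routine base-change isomorphism and the $\L^\times$ versus $\M^\times$ bookkeeping).
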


\begin{proof}
The contact invariant is represented by the same intersection point,
regardless of the coefficient system, and is a cycle for any
coefficient system because there are no holomorphic strips emanating
from it.
\end{proof}

Similar results hold for $\underline{HF}^+$ and for sutured Floer
homology $\underline{SFH}$.

\section{Sutured Floer homology and twisted coefficients}
\label{section: SFH and twisted}

For details on sutured Floer homology and the contact invariant in
sutured Floer homology, the reader is referred to
\cite{Ju1,Ju2,HKM2,HKM3}.

Let $(M,\Gamma)$ be a balanced sutured manifold.  A contact
structure $\xi$ on $M$ with convex boundary and dividing set
$\Gamma$ on $\bdry M$ will be denoted $(M,\Gamma,\xi)$. The contact
invariant of $(M,\Gamma,\xi)$ will be written as
$c(M,\Gamma,\xi;\Z)\in SFH(-M,-\Gamma;\Z)$. Next let
$(M',\Gamma')\subset (M,\Gamma)$ be an inclusion; in particular,
$M'\subset int(M)$. If a connected component $N$ of $M-int(M')$ has
boundary which is not part of $\bdry M'$, then we say $N$ is {\em
not isolated}. Otherwise $N$ is {\em isolated}.

The main result of \cite{HKM3} is the following:

\begin{thm}[Gluing Map] \label{thm: inclusion}
Let $(M',\Gamma')\subset (M,\Gamma)$ be an inclusion, and let $\xi$
be a contact structure on $M-int(M')$ with convex boundary and
dividing set $\Gamma$ on $\bdry M$ and $\Gamma'$ on $\bdry M'$. If
$M-int(M')$ has $m$ isolated components, then $\xi$ induces a
natural map:
$$\Phi_\xi\colon SFH(-M',-\Gamma';\Z)\to SFH(-M,-\Gamma;\Z)
\otimes_{\Z} V^{\otimes m},$$
so that $\Phi_\xi(c(M',\Gamma',\xi';\Z))= c(M,\Gamma,\xi'\cup
\xi;\Z)\otimes (x\otimes\dots\otimes x)$, where $x$ is the contact
class of the standard tight contact structure on $S^1\times S^2$ and
$\xi'$ is any contact structure on $M'$ with boundary condition
$\Gamma'$.  Here $V\cong \widehat{HF}(S^1\times S^2;\Z)\cong
\Z\oplus \Z$ is a $\Z$-graded vector space where the two summands
have grading which differ by one, say $0$ and $1$.
\end{thm}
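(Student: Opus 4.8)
The plan is to build the map $\Phi_\xi$ by cutting the shell $W := M - \mathrm{int}(M')$, with its contact structure $\xi$, into elementary pieces. By convex surface theory (Giroux, Honda), any contact structure on $W$ with convex boundary inducing the dividing sets $\Gamma'$ on $\partial M'$ and $\Gamma$ on $\partial M$ can be obtained from the collar $\partial M' \times [0,1]$, equipped with the $[0,1]$-invariant contact structure determined by $\Gamma'$, by attaching finitely many contact handles $h_1,\dots,h_k$ of indices $0,1,2,3$ (equivalently, via a sequence of bypass attachments together with $0$- and $3$-handles). If each contact handle attachment induces a natural map on sutured Floer homology with the expected behavior on contact classes, one then \emph{defines} $\Phi_\xi := \Phi_{h_k}\circ\cdots\circ\Phi_{h_1}$ and deduces the formula for $\Phi_\xi\bigl(c(M',\Gamma',\xi';\Z)\bigr)$ by induction on $k$. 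Set up this way, $\Phi_\xi$ depends on $\xi'$ only through the boundary condition $\Gamma'$, as asserted.

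The first task is to analyze a single contact handle. A contact $0$-handle adjoins a Darboux ball $(B^3,S^1)$ with $SFH(-B^3,-S^1;\Z)\cong\Z$ generated by its contact class, so the induced map is $\mathrm{id}\otimes(1\mapsto c_0)$; a contact $3$-handle caps off an $S^2$ boundary component and is handled by the dual statement. A contact $1$-handle, attached along $S^0\times D^2$, corresponds on an adapted sutured Heegaard diagram (coming from a partial open book) to a stabilization that does not change $SFH$, hence induces a canonical \emph{isomorphism} under which the partial open book generator is visibly preserved. The substantial case is the contact $2$-handle, attached along an annulus $S^1\times D^1$ meeting the suture in two arcs: here I would write down a local model on a sutured Heegaard diagram adapted to the contact structure assembled so far — adjoining one $\alpha$ curve, one $\beta$ curve, and a distinguished new intersection point — and show by a direct holomorphic bigon/triangle count that the resulting chain map carries the contact generator to the contact generator. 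Isolated components of $W$ enter precisely here: closing one off requires, beyond the handles that would suffice for a collar, extra capping handles whose net effect on the underlying $3$-manifold is a connected sum with $S^1\times S^2$, contributing one tensor factor $V\cong\widehat{HF}(S^1\times S^2;\Z)$ per isolated component, with the contact generator landing in the distinguished summand $x\in V$ that corresponds to the standard tight contact structure on $S^1\times S^2$.

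The remaining, and hardest, step is \emph{well-definedness}: the composite $\Phi_{h_k}\circ\cdots\circ\Phi_{h_1}$ must not depend on the contact handle decomposition. Two decompositions of $(W,\xi)$ are related by a finite sequence of moves — isotopy of attaching regions, handle slides, and creation or cancellation of a pair of contact handles of consecutive indices — so it suffices to check invariance under each. Isotopy invariance is routine; cancellation of a complementary pair reduces to showing that the composite of the two elementary maps is the identity, a local Heegaard-diagram computation; handle slides are dispatched by the standard triangle and associativity arguments for Heegaard Floer cobordism maps \cite{OSz:triangles}. One also needs that the contact class in $SFH$ is identified under the canonical isomorphisms relating sutured Heegaard diagrams built from different partial open books of the same contact manifold; this is part of the construction of the contact invariant in sutured Floer homology \cite{HKM2}, which I would invoke rather than reprove. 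The contact-class formula then follows by assembling the per-handle computations above.

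In short, the proof reduces to (i) honest chain-level models for the contact $0,1,2,3$-handle maps (and for the capping handles attached to isolated components), (ii) checking these models transport the partial open book generators correctly, and (iii) proving independence of the handle decomposition. The crux — the step I expect to absorb most of the work — is the contact $2$-handle: building a computable model for it, together with its isolated-component variant, and then proving that the resulting maps, and hence $\Phi_\xi$ itself, are insensitive to the auxiliary choices.
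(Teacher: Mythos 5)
The paper itself does not prove this theorem; it quotes it from \cite{HKM3}, and the only hint of the proof technique given here is the ``Sketch of Proof'' for the twisted-coefficients version in Section~2. That construction is different from yours: rather than decomposing the shell $W=M-\mathrm{int}(M')$ into contact handles, \cite{HKM3} works \emph{globally}. One starts from a sutured Heegaard surface $\Sigma'$ for $(M',\Gamma')$ adapted to a partial open book, extends it across $W$ to a Heegaard surface $\Sigma$ for $(M,\Gamma)$ that is contact-compatible on all of $W$ (the isolated components are what force a connected sum with $S^1\times S^2$, producing the $V^{\otimes m}$ factor), and then takes $\Phi_\xi$ to be the natural inclusion of Heegaard diagrams, tensored with the distinguished $EH$-generator on the contact-compatible part. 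Well-definedness is proved at the level of partial open books and Heegaard surface extensions, invoking Giroux's stabilization theory, not by a handle-by-handle Cerf argument. Your proposal --- compose the maps induced by contact $0$-, $1$-, $2$-, $3$-handle attachments and then prove invariance under moves between decompositions --- is a genuinely different route, one that has since been developed rigorously elsewhere and that is attractive precisely because it is modular and TQFT-friendly.

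That said, the step you describe as ``the remaining, and hardest'' is not merely hard; as written it contains a real gap. You assert that two contact handle decompositions of $(W,\xi)$ are related by a finite sequence of isotopies, handle slides, and births/deaths of cancelling pairs, and you characterize verifying invariance under these moves as routine. For smooth handle decompositions this is Cerf theory, but there is no off-the-shelf Cerf theory for \emph{contact} handle decompositions, and establishing such a result (or even a usable substitute in terms of bypass moves) is itself a substantial theorem, not a checklist. This is exactly why \cite{HKM3} does \emph{not} proceed via contact handles: Giroux's stabilization theory for (partial) open books and contact cell decompositions provides the needed uniqueness statement, and \cite{HKM2,HKM3} lean on it heavily. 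To repair your outline you would need either to supply the Cerf-type theorem for contact handles that you implicitly invoke, or to re-express your handle moves in terms of Giroux stabilizations of partial open books and inherit invariance from the construction of the sutured contact invariant in \cite{HKM2}. The per-handle models you sketch (in particular the contact $2$-handle as a local Heegaard diagram modification) are reasonable and in line with what appears in the later literature, so the gap is localized to this well-definedness step.
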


The sutured Floer homology of $(M,\Gamma)$ can be defined over the
twisted coefficient system $\L=\Z[H_2(M;\Z)]$ --- its definition is
completely analogous to the closed case. If $\L\to \M$ is a
$\Z$-algebra homomorphism, then the invariant for a compact contact
3-manifold $(M,\Gamma,\xi)$ is denoted by
$\underline{c}(M,\Gamma,\xi;\M)\in \underline{SFH}(-M,-\Gamma; \M)/
\M^\times$. If $\bdry M=\emptyset$, then the invariant will be
denoted by $\underline{c}(M,\xi;\M)\in
\underline{\widehat{HF}}(-M;\M)/\M^\times$. (Assuming $\bdry
M=\emptyset$ and $M$ is connected, we view $M$ as the sutured
manifold $(M-B^3,S^1)$, where $B^3$ is a small $3$-ball and $S^1$ is
the suture on $\bdry B^3=S^2$.)

\s The version of Theorem~\ref{thm: inclusion} with respect to
twisted coefficients is the following:

\begin{thm}[Gluing Map, Twisted Coefficients Version]
Let $(M',\Gamma')\subset (M,\Gamma)$ be an inclusion, and let $\xi$
be a contact structure on $M-int(M')$ with convex boundary and
dividing set $\Gamma$ on $\bdry M$ and $\Gamma'$ on $\bdry M'$. Then
there is a natural map
\begin{equation}
\Phi_\xi\colon \underline{SFH}(-M',-\Gamma';\Z[H_2(M')])\rightarrow
\underline{SFH}(-M,-\Gamma;\Z[H_2(M)]), \end{equation} so that
$\Phi_\xi(\underline{c}(M',\Gamma',\xi';\Z[H_2(M')])=\underline{c}(M,\Gamma,\xi'\cup\xi;\Z[H_2(M)]),$
where $\xi'$ is a contact structure on $M'$ with boundary condition
$\Gamma'$.
\end{thm}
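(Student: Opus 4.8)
The plan is to retrace the construction of the untwisted gluing map $\Phi_\xi$ of \cite{HKM3} and to observe that every step is compatible with the $H_2$-weighting once one feeds in the homomorphism $\iota_*\colon H_2(M';\Z)\to H_2(M;\Z)$ induced by the inclusion $\iota\colon M'\hookrightarrow M$. Recall that in \cite{HKM3} the map $\Phi_\xi$ is ultimately realized at the chain level: one decomposes the inclusion into elementary inclusions, each given by an elementary extension of the contact structure $\xi$ on $M\setminus\mathrm{int}(M')$ (attaching a single contact handle); starting from an admissible Heegaard diagram $(\Sigma',\alpha',\beta',z)$ for $(-M',-\Gamma')$ subordinate to a partial open book, one enlarges it handle by handle to an admissible diagram $(\Sigma,\alpha,\beta,z)$ for $(-M,-\Gamma)$, adjoining for each handle a pair of curves meeting in at most one point. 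Hence every generator $\mathbf{x}'\in\mathbb{T}_{\alpha'}\cap\mathbb{T}_{\beta'}$ extends to a unique generator $\iota(\mathbf{x}')\in\mathbb{T}_{\alpha}\cap\mathbb{T}_{\beta}$, and the distinguished (partial open book) generator representing $c(M',\Gamma',\xi';\Z)$ goes to the one representing $c(M,\Gamma,\xi'\cup\xi;\Z)$.

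The new ingredient is purely algebraic. The homomorphism $\iota_*$ induces a ring homomorphism $\Z[H_2(M')]\to\Z[H_2(M)]$, $e^a\mapsto e^{\iota_* a}$, and I would define a chain map
\[
\underline\phi\colon \underline{CF}(-M',-\Gamma';\Z[H_2(M')])\longrightarrow \underline{CF}(-M,-\Gamma;\Z[H_2(M)])
\]
by declaring $\underline\phi(e^a\,\mathbf{x}')=e^{\iota_* a}\,\iota(\mathbf{x}')$ and extending additively. That $\underline\phi$ commutes with the twisted differentials rests on two facts. First, the geometric input of \cite{HKM3}: every $\phi\in\pi_2(\iota(\mathbf{x}'),\mathbf{w})$ in the enlarged diagram with $\mu(\phi)=1$, $n_z(\phi)=0$ and $\#(\mathcal{M}(\phi)/\R)\neq 0$ is the image of a unique $\phi'\in\pi_2(\mathbf{x}',\mathbf{y}')$ in the sub-diagram with $\iota(\mathbf{y}')=\mathbf{w}$, $\mu(\phi')=1$, $n_z(\phi')=0$, and equal moduli counts; this is a statement about holomorphic curves only and does not see the coefficient ring. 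Second, a bookkeeping fact: for such $\phi'$, and for surjective additive assignments $A'$ and $A$ valued in $H_2(M')$ and $H_2(M)$ respectively, one has $A(\phi)=\iota_*(A'(\phi'))$, since a $2$-cycle in $M'$ representing $A'(\phi')$ is pushed forward by $\iota$ to a $2$-cycle in $M$ representing $A(\phi)$. Granting these, $\underline\partial\circ\underline\phi=\underline\phi\circ\underline\partial'$ term by term, so $\underline\phi$ induces the desired map $\Phi_\xi$ on homology; and, because $\underline\phi$ carries the partial-open-book generator to the partial-open-book generator with coefficient $1$ (and both are cycles, having no emanating holomorphic strips, exactly as in Lemma~\ref{lemma: coefficient-naturality}), $\Phi_\xi$ sends $\underline c(M',\Gamma',\xi';\Z[H_2(M')])$ to $\underline c(M,\Gamma,\xi'\cup\xi;\Z[H_2(M)])$.

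It then remains to verify naturality: that $\Phi_\xi$ is independent of the partial open book, the intermediate handle decomposition and the admissible diagrams chosen, and is compatible with the isotopies, handleslides and stabilizations relating such diagrams. Here one transcribes the invariance arguments of \cite{HKM3} verbatim, invoking the twisted versions of the standard invariance results for sutured Floer homology and carrying the $H_2$-weights along unchanged. Isolated components are handled as in the untwisted statement: each contributes a tensor factor of the twisted Floer homology of $S^1\times S^2$ together with its distinguished class, suppressed in the statement above (and absent in the applications below). The one point genuinely requiring care is the bookkeeping fact: one must check that adjoining the contact handles to the Heegaard diagram introduces no periodic domain whose homology class fails to lie in the image of $\iota_*$, and alters the $H_2$-class of no old domain, so that the homological weight of each counted disk is transported exactly by $\iota_*$. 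Beyond that, the proof is the argument of \cite{HKM3} with $\Z$ replaced throughout by $\Z[H_2(-)]$.
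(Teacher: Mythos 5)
Your chain-level picture captures part of the [HKM3] construction, but it misses what is genuinely new in the twisted setting, and the treatment of isolated components is where the gap lies. You say that each isolated component ``contributes a tensor factor of the twisted Floer homology of $S^1\times S^2$ together with its distinguished class, suppressed in the statement above.'' This is not correct, and it is precisely the point the paper's proof addresses. In the untwisted Theorem~\ref{thm: inclusion}, the target really is $SFH(-M,-\Gamma;\Z)\otimes_{\Z} V^{\otimes m}$ with $V\cong\widehat{HF}(S^1\times S^2;\Z)\cong\Z\oplus\Z$, because the $\alpha'$- and $\beta'$-curves for $\Sigma'$ cannot be extended across the isolated components without first taking a connected sum with $S^1\times S^2$; so your assertion that every generator $\mathbf{x}'$ extends to a unique generator of the enlarged diagram fails in exactly this case. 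In the twisted statement, the $V^{\otimes m}$ factor is not suppressed---it is \emph{absent}, and this has to be proved.

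The paper's argument is that the connected-sum trick still applies, producing a diagram for $M''=M\#(S^1\times S^2)$, but then one computes
\[
\underline{\widehat{HF}}(S^1\times S^2;\,\Z[t,t^{-1}])\cong\Z[t,t^{-1}]/(t-1)\cong\Z ,
\]
because the chain complex is $\Z[t,t^{-1}]\langle x,y\rangle$ with $\partial y=(t-1)x$ (here $t$ is the exponential of $[\{pt\}\times S^2]$). Thus the $S^1\times S^2$ summand has rank one, not two, and the induced isomorphism
\[
\underline{SFH}(-M'',-\Gamma;\Z[H_2(M'')])\cong\underline{SFH}(-M,-\Gamma;\Z[H_2(M)])
\]
(via the projection $H_2(M'')\cong H_2(M)\oplus H_2(S^1\times S^2)\to H_2(M)$, with $e^{[\{pt\}\times S^2]}$ acting trivially) is what collapses the $V^{\otimes m}$ factor. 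Your bookkeeping observations $A(\phi)=\iota_*(A'(\phi'))$ and the $e^a\mapsto e^{\iota_*a}$ weighting are fine for the non-isolated part of the construction, but without the $S^1\times S^2$ computation you cannot obtain the stated target, and your proposal as written would at best yield a map into a group tensored with some unidentified twisted $V$-factors rather than into $\underline{SFH}(-M,-\Gamma;\Z[H_2(M)])$.
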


Here, if $(M',\Gamma')=(M_1',\Gamma_1')\sqcup (M_2',\Gamma_2')$,
then $\underline{SFH}(-M',-\Gamma';\Z[H_2(M')])$ is isomorphic to
$$\underline{SFH}(-M_1',-\Gamma_1';\Z[H_2(M_1')])\otimes_{\Z}
\underline{SFH}(-M_2',-\Gamma_2';\Z[H_2(M_2')]).$$

\begin{proof}[Sketch of Proof.]
We briefly explain the modifications needed for the proof in the
twisted coefficients case.

Without loss of generality, consider the situation where we glue
$(M_1',\Gamma_1')$ and $(M_2',\Gamma_2')$ along a common closed,
oriented, connected surface $T_0$ (so that the sutures match) to
obtain $(M,\Gamma)$. More precisely, suppose the following holds:
$(M',\Gamma')=(M_1',\Gamma_1')\sqcup (M_2',\Gamma_2')$, $M_1',M_2'$
are connected, and each $M_i'$ has more than one boundary component.
Moreover, $M'\subset int(M)$ so that $M-int(M')$ consists of
components $T_i\times[0,1]$, $i=0,1,\dots, k$, where $T_i$ are
closed, oriented, connected surfaces and the contact structures on
$T_i\times[0,1]$ are $[0,1]$-invariant and compatible with the
dividing set on $\bdry M\sqcup \bdry M'$. The component
$T_0\times[0,1]$ has one boundary component $T_0\times\{0\}\subset
\bdry M_1'$ and the other boundary component $T_0\times\{1\}\subset
\bdry M_2'$. Each $T_i\times[0,1]$, $i=1,\dots,k$, has one boundary
component $\subset \bdry M$ and the other boundary component
$\subset \bdry M'_j$ for some $j$.

Let $\Sigma'$ be a compatible Heegaard surface for $(M',\Gamma')$,
and $\Sigma$ be an extension to a Heegaard surface for $(M,\Gamma)$,
as given by \cite{HKM3}. In particular, $\Sigma$ is
contact-compatible on $T_i\times[0,1]$, $i=0,\dots,k$. Since there
is one isolated component $T_0\times[0,1]$, the sets of
$\alpha'$-curves and $\beta'$-curves for $\Sigma'$ cannot be
extended to a complete set of $\alpha$-curves and $\beta$-curves for
$\Sigma$. In order to remedy this problem, we take a connected sum
of $M$ with $S^1\times S^2$. More precisely, on the
contact-compatible portion, $\Sigma$ is (locally) of the form $\bdry
(S\times[0,1])$, where $S$ is a surface with boundary (i.e., a page
of a very partial open book) which may possibly be disconnected.
Then we attach a $2$-dimensional $1$-handle to $S$ so as to connect
$T_0\times[0,1]$ to some other $T_i\times[0,1]$ adjacent to $M_2'$.
(On the level of $\Sigma$, we remove two disks and glue their
boundaries together.)  This gives rise to a Heegaard decomposition
$(\Sigma'',\alpha'',\beta'')$ for $M''=M\#(S^1\times S^2)$.

%Consider the following Mayer-Vietoris sequence: $$0\to
%H_2(T_0)\stackrel{i}\to H_2(M_1')\oplus H_2(M_2'')\stackrel{j}\to
%H_2(M''),$$ where $M_2''=M_2'\cup (T_0\times[0,1])\cup
%(T_i\times[0,1])\# (S^1\times S^2)$ and $T_i\times[0,1]$ is adjacent
%to $M_2'$. Here $i([T_0])=([T_0\times\{0\}],[T_0\times\{0\}])$.
%Observe that $[T_0\times\{0\}]$ is nonzero in both $H_2(M_1')$ and
%$H_2(M_2'')$, since both $M_1'$ and $M_2''$ have more than one
%boundary component. Since $H_2(M_2'')=H_2(M_2')\oplus H_2(S^1\times
%S^2)= H_2(M_2')\oplus \Z$, the image of $i$ is generated by
%$([T_0\times\{0\}],[T_0\times\{1\}],[\{pt\}\times S^2])$. Therefore
%the inclusion $M'\subset M''$ induces an injection $H_2(M')\cong
%(H_2(M_1')\oplus H_2(M_2')\oplus \Z)/\operatorname{Im}(i)
%\hookrightarrow H_2(M'')$.

The inclusion $M'\hookrightarrow M''$ gives rise to a group
homomorphism $H_2(M')\rightarrow H_2(M'')$ and also to an algebra
homomorphism $\Z[H_2(M')]\rightarrow \Z[H_2(M'')].$ Here
$$\Z[H_2(M')]=\Z[H_2(M'_1)\oplus H_2(M'_2)]\cong
\Z[H_2(M'_1)]\otimes_{\Z} \Z[H_2(M'_2)].$$ Therefore, the group
$\underline{SFH}(-M'',-\Gamma;\Z[H_2(M'')])$ has the structure of a
$\Z[H_2(M'')]$-module and also of a $\Z[H_2(M'_1)]\otimes_{\Z}
\Z[H_2(M'_2)]$-module.

By the above two paragraphs, the inclusion of
$(\Sigma',\alpha',\beta')$ into $(\Sigma'',\alpha'',\beta'')$ gives
rise to the map
\begin{equation} \Phi_\xi\colon
\underline{SFH}(-M',-\Gamma';\Z[H_2(M')])\rightarrow
\underline{SFH}(-M'',-\Gamma;\Z[H_2(M'')]),
\end{equation}
obtained by tensoring with the contact class in the
contact-compatible portion. The fact that $\Phi_\xi$ is independent
of the choices is proved in the same way as in Theorem~\ref{thm:
inclusion} and will be omitted.

Finally, we claim that:
\begin{equation} \label{eqn: same}
\underline{SFH}(-M'',-\Gamma;\Z[H_2(M'')])\cong
\underline{SFH}(-M,-\Gamma; \Z[H_2(M)]),
\end{equation}
where the
isomorphism is a $\Z[H_2(M'')]$-module homomorphism, given by the
projection map
$$\pi_1\colon H_2(M'')\cong H_2(M)\oplus H_2(S^1\times S^2)\to H_2(M)$$
onto the first factor.  Here $e^{[\{pt\}\times S^2]}$ acts
trivially. This is a simple generalization of the calculation of
$$\underline{\widehat{HF}}(S^1\times S^2; \Z[H_2(S^1\times
S^2)])\cong \underline{\widehat{HF}}(S^1\times S^2; \Z[t,t^{-1}]),$$
where $t$ is the exponential of the homology class of $\{pt\}\times
S^2$ (or, equivalently, $[\{pt\}\times S^2]$, viewed
multiplicatively). The chain complex is generated by two generators
$x,y$, and $\bdry x=0$, $\bdry y= (t-1)x$. Therefore,
$$\underline{\widehat{HF}}(S^1\times S^2; \Z[H_2(S^1\times S^2)])\cong
\Z[t,t^{-1}]/(t-1)\cong \Z.$$ The homology group is generated by
$x$, and $t$ acts trivially (i.e., by the identity) on $x$.
\end{proof}

\s\n {\bf Example.} Consider $M_1'=T^2\times[0,1]$ and
$M_2'=T^2\times[1,2]$. Let $t_i$ be the exponential of the generator
of $H_2(M_i')$, $i=1,2$. If we glue to obtain $M=T^2\times[0,2]$,
then the map
\begin{eqnarray*}
\Phi\colon\underline{SFH}(-M_1',-\Gamma_1';\Z[H_2(M_1')])\otimes_{\Z}
\underline{SFH}(-M_2',-\Gamma_2';\Z[H_2(M_2')])\hspace{.8in}\\
\hspace{.8in}\to \underline{SFH}(-M'',-\Gamma;\Z[H_2(M'')])\cong
\underline{SFH}(-M,-\Gamma;\Z[H_2(M)]),
\end{eqnarray*}
is a $\Z[t_1,t_1^{-1}]\otimes_{\Z}\Z[t_2,t_2^{-1}]$-module
homomorphism.  Since $e^{[\{pt\}\times S^2]}$ acts trivially, it
follows that the multiplication by $t_1$ and $t_2$ are the same.
Hence the above becomes a homomorphism in the category of
$\Z[t,t^{-1}]$-modules:
$$\underline{SFH}(-M_1',-\Gamma_1';\Z[t,t^{-1}])\otimes_{\Z}
\underline{SFH}(-M_2',-\Gamma_2';\Z[t,t^{-1}])\to
\underline{SFH}(-M,-\Gamma;\Z[t,t^{-1}]).$$

\section{Proof of Theorem~\ref{thm: main}}
\label{section: proof of main}

In this section we prove Theorem~\ref{thm: main} without precisely
determining the Laurent polynomial $p(t)\in \Z[t,t^{-1}]$. The
precise polynomial will be determined in Section~\ref{section:
determination}.

Let $\Gamma$ be the following suture/dividing set on the boundary of
$N=T^2\times[0,1]$: $\#\Gamma_{T_0}=\#\Gamma_{T_1}=2$,
$\Gamma_{T_0}$ and $\Gamma_{T_1}$ have no homotopically trivial
components, $\mbox{slope}(\Gamma_{T_0})= 0$, and
$\mbox{slope}(\Gamma_{T_1})=\infty$. Here $\#$ denotes the number of
connected components, $T_i=T^2\times\{i\}$, and the orientation of
$T_i$ is inherited from that of $T^2$. (Hence $\bdry N=T_1\cup
-T_0$.)

Next, let $[T]\in H_2(N)$ be the generator representing
$T^2\times\{pt\}$.  Also let $\L=\Z[H_2(N)]=\Z[t,t^{-1}]$, where
$t=e^{[T]}$. Then we have the following:

\begin{lemma}
If $\L\to \M$ is a $\Z$-algebra homomorphism, then \be
\item[(i)] $\underline{SFH}(N,\Gamma;\M)\cong \M\oplus \M\oplus \M \oplus
\M,$ where each direct summand represents a distinct Spin$^c$
structure; \item[(ii)] If $(N,\Gamma,\xi)$ is a basic slice, then
$\underline{c}(N,\Gamma,\xi;\M)$ generates the appropriate $\M$. \ee
\end{lemma}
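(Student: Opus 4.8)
The plan is to compute $\underline{SFH}(N,\Gamma;\M)$ by first computing it over $\Z$ using the known result from \cite{HKM3} (or \cite{Ju2}) that $SFH(N,\Gamma;\Z)\cong \Z^4$, with the four summands corresponding to distinct $\mathrm{Spin}^c$ structures, and then applying the change-of-coefficients machinery from Subsection~\ref{subsection: change of coefficients}. Since $\L=\Z[t,t^{-1}]$ is a PID (it is a localization of $\Z[t]$, hence a UFD, but more to the point it is a localization of a PID... actually $\Z[t]$ is not a PID; rather one uses that $\L$ is Noetherian and that we only need the two-column spectral sequence), the cleanest route is: compute $\underline{SFH}(N,\Gamma;\L)$ directly from a Heegaard diagram, observe that the twisted chain complex splits along the four $\mathrm{Spin}^c$ summands, and show each summand computes to a free rank-one $\L$-module. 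For (i) in general, I would then invoke that $\M$ is obtained from $\L$ by base change along $\L\to\M$ and use the universal coefficients / change-of-coefficients spectral sequence \eqref{change-coefficients}: if each $H_j$ of the $\L$-complex is free, all $\mathrm{Tor}$ terms vanish and $\underline{SFH}(N,\Gamma;\M)\cong \underline{SFH}(N,\Gamma;\L)\otimes_{\L}\M\cong \M^4$.

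First I would set up an explicit contact-compatible Heegaard diagram for $(N,\Gamma)$ adapted to the basic slice, as in \cite{HKM2, HKM3}; for $T^2\times[0,1]$ with two sutures of slopes $0$ and $\infty$ the associated partial open book is small and the diagram has few generators. The key point is that $H_2(N;\Z)\cong\Z$ is generated by $[T]$, so a surjective additive assignment valued in $H_2(N;\Z)$ assigns a power of $t$ to each homotopy class of disk, and the twisted differential $\underline{\partial}$ counts holomorphic disks weighted by these powers. One then checks that within each of the four $\mathrm{Spin}^c$ summands the twisted complex has homology a free $\L$-module of rank one — equivalently that the untwisted complex has no torsion and the twisting does not collapse anything, which follows because $SFH(N,\Gamma;\Z)\cong\Z^4$ is already free and the $E^2$-page $\mathrm{Tor}^i_\L(H_j,\M)$ can only have the $i=0$ column nonzero once freeness of the $\L$-homology is established.

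For (ii), I would use that the contact class $\underline{c}(N,\Gamma,\xi;\M)$ is represented, at the chain level, by a single intersection point $\mathbf{x}_\xi$ in the contact-compatible Heegaard diagram (this is the EH-class construction of \cite{HKM2}, carried over to twisted coefficients as in Lemma~\ref{lemma: coefficient-naturality} and the remark following it), and that this intersection point is a cycle because no holomorphic disks emanate from it. Since a basic slice occupies a single $\mathrm{Spin}^c$ summand, $\underline{c}(N,\Gamma,\xi;\M)$ lies in the corresponding rank-one free $\M$-summand; to see it generates, I would first verify this over $\Z$ — i.e. that $c(N,\Gamma,\xi;\Z)$ generates the relevant $\Z$-summand of $SFH(N,\Gamma;\Z)$, which is essentially the computation in \cite{HKM2} identifying the basic slice EH-class with a generator — and then deduce the twisted statement via the natural map $\underline{SFH}(-N,-\Gamma;\L)\to SFH(-N,-\Gamma;\Z)$ of Lemma~\ref{lemma: coefficient-naturality}: a class that maps to a generator of a free rank-one $\Z$-module under reduction mod the augmentation ideal must itself be a generator of the free rank-one $\L$-module, by Nakayama-type reasoning, and then base-change to arbitrary $\M$.

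The main obstacle I anticipate is the bookkeeping of $\mathrm{Spin}^c$ structures and additive assignments in the twisted setting: one must be careful that the four $\mathrm{Spin}^c$ summands of $SFH(N,\Gamma;\Z)$ remain the natural indexing set after twisting, that the $H_2(N)$-valued additive assignment is set up so the twisted differential genuinely splits along $\mathrm{Spin}^c$, and that the basic slice's EH-class sits in exactly one summand. A secondary subtlety is the $\M^\times$-ambiguity: "generates" must be read as "generates up to a unit of $\M$," so the argument only needs to pin down the class up to units, which is exactly what the reduction-to-$\Z$ plus freeness argument delivers. Once the explicit diagram is in hand these are routine but must be done with care; I do not expect to need any new holomorphic-curve input beyond the computation of $SFH(N,\Gamma;\Z)$ already in the literature.
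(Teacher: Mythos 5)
Your plan correctly identifies the key input --- an explicit contact-compatible Heegaard diagram for $(N,\Gamma)$ from \cite{HKM2} --- but it then takes a detour that is both unnecessary and, in one step, incorrect. The paper's proof exploits a stronger structural fact about that diagram which your proposal does not state: in the diagram of \cite[Figure~15]{HKM2} there are exactly four intersection points and \emph{no holomorphic disks connecting any pair of them}, so the sutured Floer differential is identically zero at the chain level. Once this is observed, both (i) and (ii) are immediate in \emph{any} coefficient system $\M$: the chain complex is the free $\M$-module on the four intersection points, there is nothing to compute in homology, and the contact class (which is represented by one of the four intersection points) is by definition one of the four generators. No change-of-coefficients spectral sequence, no discussion of $\operatorname{Tor}$ over $\L=\Z[t,t^{-1}]$, and no Nakayama-type argument is needed.

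The Nakayama step in your proof of (ii) is a genuine gap. You claim that if a class in a free rank-one $\L$-module reduces to a generator of $\Z\cong\L/(t-1)$, then it generates the $\L$-module. This is false: $\L=\Z[t,t^{-1}]$ is not local (nor is $(t-1)$ contained in its Jacobson radical), so Nakayama does not apply, and a concrete counterexample is $x=2t-1\in\L$, which reduces to $1\in\Z$ modulo $(t-1)$ but is not a unit in $\L$ (the units are $\pm t^k$) and hence does not generate $\L$. To pin down the contact class up to $\M^\times$ you really do need the chain-level statement that it is one of the intersection-point generators of a complex with vanishing differential, which is exactly what the paper uses. If you carry out the diagram computation you propose in your second paragraph, you will find the differential vanishes, and you should then discard the spectral-sequence and Nakayama scaffolding in favor of the direct conclusion.
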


\begin{proof}
This follows from \cite[Section~5, Example~4]{HKM2}, as well as
\cite[Figure~15]{HKM2}.  There are four intersection points in the
Heegaard diagram given in that figure, and no holomorphic disks
between any two. Hence each intersection point generates an
$\M$-direct summand. Moreover, one of the intersection points
corresponds to the contact invariant for a basic slice.
\end{proof}

\begin{lemma} \label{lemma: add to basic slice}
If $(N,\Gamma,\xi)$ is a basic slice, then $(N,\Gamma,\xi')$,
obtained from $\xi$ via a full Lutz twist along a pre-Lagrangian
torus parallel to $T^2\times\{pt\}$, satisfies:
\begin{equation}\label{equation: add torsion}
\underline{c}(N,\Gamma,\xi';\L)=p(t)\cdot
\underline{c}(N,\Gamma,\xi;\L),
\end{equation}
where $p(t)$ is a nonzero element in $\L$ which satisfies $p(1)=0$.
\end{lemma}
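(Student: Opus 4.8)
The plan is to reduce the statement about the full Lutz twist on a basic slice to a computation involving the contact manifolds $(T^3,\xi_n)$, whose contact invariants are understood by Proposition~\ref{prop: nonzero}, via the gluing map of Theorem~\ref{thm: inclusion} and its twisted version. First I would observe that a full Lutz twist along a pre-Lagrangian torus parallel to $T^2\times\{pt\}$ amounts to cutting $(N,\Gamma,\xi)$ along such a torus and inserting $(T^2\times[0,1],\eta_{2\pi})$; equivalently, $(N,\Gamma,\xi')$ is obtained by gluing $(N,\Gamma,\xi)$ itself onto a thickened torus carrying $\eta_{2\pi}$, suitably arranged so that the boundary conditions match. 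Concretely, I would set up an inclusion $(N,\Gamma)\subset (N,\Gamma)$ of sutured manifolds in which the complementary region $N-\mathrm{int}(N)$ is a copy of $T^2\times[0,1]$ carrying (a perturbation of) $\eta_{2\pi}$, with dividing sets of the appropriate slopes on the two boundary tori, and the key point is that $\xi' = \xi\cup(\text{the }\eta_{2\pi}\text{-region})$ up to isotopy. Then the twisted gluing map gives a $\Z[t,t^{-1}]$-module homomorphism $\Phi_{\eta_{2\pi}}\colon \underline{SFH}(-N,-\Gamma;\L)\to\underline{SFH}(-N,-\Gamma;\L)$ sending $\underline{c}(N,\Gamma,\xi;\L)$ to $\underline{c}(N,\Gamma,\xi';\L)$.

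Next, by part (i) of the preceding Lemma, $\underline{SFH}(N,\Gamma;\M)$ splits as a direct sum of four copies of $\M$, one per $\mathrm{Spin}^c$ structure, and by part (ii) the contact invariant of a basic slice generates its summand. The gluing map $\Phi_{\eta_{2\pi}}$ respects the $\mathrm{Spin}^c$ decomposition and, restricted to the summand containing $\underline{c}(N,\Gamma,\xi;\L)$, it is multiplication by some element $p(t)\in\L$; this defines the Laurent polynomial and makes Equation~\eqref{equation: add torsion} hold essentially by construction (the contact class of $\xi'$ lands in the same $\mathrm{Spin}^c$ summand because the insertion does not change the relevant $\mathrm{Spin}^c$ data, the two contact structures being homotopic away from the torsion region). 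It remains only to check that $p(t)$ is nonzero and that $p(1)=0$.

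For non-triviality of $p(t)$, the plan is to cap off: glue $(N,\Gamma,\xi)$ with the complementary solid-torus-type piece so that the result is a closed contact manifold, for a suitable choice $(T^3,\xi_0)$ and its Lutz twist $(T^3,\xi_1)$. Using the gluing map together with Lemma~\ref{lemma: coefficient-naturality} (naturality of contact invariants under change of coefficients and under gluing), the equation $\underline{c}(N,\Gamma,\xi';\L)=p(t)\underline{c}(N,\Gamma,\xi;\L)$ maps forward to $\underline{c}(T^3,\xi_1;\M)=p(t)\underline{c}(T^3,\xi_0;\M)$ for the appropriate $\M=\Z[t,t^{-1}]$. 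Since $\underline{c}(T^3,\xi_0;\M)$ is non-torsion over $\M$ by Proposition~\ref{prop: nonzero}, if $p(t)=0$ then $\underline{c}(T^3,\xi_1;\M)=0$; but $(T^3,\xi_1)$ is weakly fillable (by $D^2\times T^2$ as in the discussion preceding Proposition~\ref{prop: nonzero}), so Theorem~\ref{thm: weakly fillable} forces $\underline{c}(T^3,\xi_1;\M)$ to be nontrivial and non-torsion — a contradiction. Hence $p(t)\neq 0$. For $p(1)=0$: set $t=1$, i.e., pass to untwisted $\Z$-coefficients via $\L\to\Z$, $t\mapsto 1$. Then $\underline{c}(N,\Gamma,\xi';\Z)=p(1)\underline{c}(N,\Gamma,\xi;\Z)$; capping off to a closed manifold as above and applying the untwisted Vanishing Theorem (Theorem~\ref{torsion}), the image of $\underline{c}(\xi';\Z)$ vanishes, while the image of $\underline{c}(\xi;\Z)$ is the contact invariant of the weakly fillable $(T^3,\xi_0)$ and hence (by the $\Z$-coefficient version, e.g.\ the Ozsv\'ath--Szab\'o nonvanishing result) is nonzero. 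Therefore $p(1)=0$.

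The main obstacle I expect is the bookkeeping in the first paragraph: arranging the inclusion $(N,\Gamma)\subset(N,\Gamma)$ and the choice of dividing-set slopes on the complementary $T^2\times[0,1]$ so that (a) the complement really does carry a contact structure isotopic to $\eta_{2\pi}$ rel boundary, (b) $\xi\cup\eta_{2\pi}$ is genuinely isotopic (not merely homotopic) to the Lutz-twisted $\xi'$, and (c) the gluing map preserves the $\mathrm{Spin}^c$ summand in which the basic-slice contact class lives, so that the proportionality constant is a single element $p(t)\in\L$ rather than something more complicated. Once this gluing picture is pinned down, the non-triviality and the $p(1)=0$ assertions follow cleanly from the fillability of $(T^3,\xi_1)$ combined with the Vanishing Theorem, as above. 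The explicit value $p(t)=t-1$ is deferred to the later section and is not needed here.
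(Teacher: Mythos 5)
Your proposal follows essentially the same outline as the paper's proof: establish the proportionality $\underline{c}(\xi';\L)=p(t)\,\underline{c}(\xi;\L)$, then pin down $p(1)=0$ via reduction to $\Z$-coefficients and the Vanishing Theorem, and $p(t)\ne 0$ by mapping into $T^3$ and using nonvanishing of the twisted invariant there. The execution differs in a few minor but worth-noting ways.

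For the existence of $p(t)$, you invoke the twisted gluing map $\Phi_{\eta_{2\pi}}$ as a $\L$-module endomorphism of $\underline{SFH}(-N,-\Gamma;\L)$. The paper's argument is more economical: since the previous lemma identifies the $\mathrm{Spin}^c$-summand containing the basic-slice class as a free rank-one $\L$-module generated by $\underline{c}(\xi;\L)$, and $\xi'$ is homotopic to $\xi$ (so its invariant lives in the same summand), proportionality is automatic — no gluing map needed to \emph{define} $p(t)$. Your approach is correct, but the gluing-map machinery is only genuinely required later (in the proof of Theorem~\ref{thm: main}), not to define $p$.

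For $p(1)=0$, you reduce to $\Z$-coefficients and then cap off to a closed $T^3$. The paper stays in the sutured setting: $c(N,\Gamma,\xi;\Z)\neq 0$ is part (ii) of the preceding lemma (it generates a $\Z$-summand, hence is non-torsion), and $c(N,\Gamma,\xi';\Z)=0$ follows directly from the sutured version of the vanishing result in~\cite{GHV}. Your capping-off detour works but is unnecessary, and it introduces a subtle risk you should flag: the untwisted nonvanishing result of Ozsv\'ath--Szab\'o is for \emph{Stein/strong} fillability, not weak fillability (with $\Z$-coefficients weak fillability does not imply $c\neq 0$). You get away with it because the cap you chose gives the Stein-fillable structure on $T^3$, but as written ``the $\Z$-coefficient version of the Ozsv\'ath--Szab\'o nonvanishing result for weakly fillable'' is a misstatement. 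For $p(t)\neq 0$, your capping-off argument matches the paper's inclusion into $T^3$ and appeal to Proposition~\ref{prop: nonzero}, modulo the choice of which $\xi_n$ one lands on (the paper lands on $\xi_2$; the point, correctly observed by you, is only that some weakly fillable $\xi_n$ on $T^3$ appears).
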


\s\n {\em Remark.} Such a pre-Lagrangian torus exists by
\cite[Corollary~4.8]{H1}.

\begin{proof}
Since $\underline{c}(N,\Gamma,\xi;\L)$ generates $\L$, and $\xi$ and
$\xi'$ are homotopic (hence are in the same Spin$^c$ structure), it
follows that there is some element $p(t)$ of $\L$ so that
Equation~\ref{equation: add torsion} holds.

Next we apply Lemma~\ref{lemma: coefficient-naturality}, i.e., the
naturality of the contact invariant with respect to change of
coefficients. Consider the algebra homomorphism $\L\rightarrow
\Z=\L/(t-1)$.  The corresponding map
$$\underline{SFH}(-N,-\Gamma;\L)\rightarrow SFH(-N,-\Gamma;\Z)$$
sends $c(N,\Gamma,\xi;\L)$ to $c(N,\Gamma,\xi;\Z)$ and
$c(N,\Gamma,\xi';\L)$ to
$c(N,\Gamma,\xi';\Z)=p(1)c(N,\Gamma,\xi;\Z)$. Now, since
$c(N,\Gamma,\xi;\Z)\not=0$ and we know from \cite{GHV} that
$c(N,\Gamma,\xi';\Z)=0$, it follows that $p(1)=0$.

To prove that $p(t)$ is nonzero, we show that
$\underline{c}(N,\Gamma,\xi';\L)\not=0$.  In fact, there is an
inclusion of $(N,\Gamma)$ into $T^3$ which sends $\xi'$ to $\xi_2$,
the double cover of the standard Stein fillable contact structure on
$T^3$. We have a corresponding inclusion map
$$\Phi\colon \underline{SFH}(-N,-\Gamma;\Z[H_2(N)])\to
\underline{\widehat{HF}}(-T^3;\Z[H_2(T^3)]).$$ Taking a basis
$\{T,T',T''\}$ for $H_2(T^3)$ where $T$ comes from $H_2(N)$, and
setting $T'=T''=0$, we have a projection of $H_2(T^3)$ to $\Z$,
generated by $T$. This gives rise to:
$$\Phi\colon \underline{SFH}(-N,-\Gamma;\L)\to
\underline{\widehat{HF}}(-T^3;\L),$$ which sends
$\underline{c}(N,\Gamma,\xi';\L)$ to $\underline{c}(T^3,\xi_2;\L)$.
Now, $\underline{c}(T^3,\xi_2;\L)$ is nonzero by
Proposition~\ref{prop: nonzero}. This implies that
$\underline{c}(N,\Gamma,\xi';\L)$ is also nonzero.
\end{proof}

Now, $p(1)=0$ means that $p(t)$ is divisible by $t-1$. This function
$p(t)$ is now the universal Laurent polynomial which is multiplied
whenever $2\pi$-torsion is added.

\begin{proof}[Completion of proof of Theorem~\ref{thm: main} without
determining $p(t)$.] Let $T$ be the pre-Lagrangian torus, along
which the full Lutz twist will be applied.  Then there exists a
basic slice $(N_1=T^2\times[0,1],\xi|_{N_1})\subset (M,\xi)$ so that
$T\subset N_1$ (and $T$ is parallel to $T^2\times\{t\}$). Decompose
$M$ into $N_1$ and $N_2=M-N_1$. Moreover,
$\underline{c}(M,\xi;\Z[H_2(M)])$ is obtained by taking the tensor
product of $\underline{c}(N_i,\xi|_{N_i}; \Z[H_2(N_i)])$, $i=1,2$.
Now, applying a full Lutz twist is equivalent to changing
$\xi|_{N_1}$ to $\xi|_{N_1}'$ as in Lemma~\ref{lemma: add to basic
slice}.  This has the effect of multiplying
$\underline{c}(N_1,\xi|_{N_1}; \L)$ by $p(t)$. The theorem now
follows from linearity.
\end{proof}

\section{Determination of $p(t)$} \label{section: determination}

The goal of this section is to prove the following theorem:

\begin{thm}\label{thm: polynomial}
$p(t)=t-1$.
\end{thm}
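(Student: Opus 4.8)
The plan is to pin down the polynomial $p(t)$, which we already know has the form $p(t) = (t-1)q(t)$ for some nonzero Laurent polynomial $q(t) \in \Z[t,t^{-1}]$, by testing the universal relation of Lemma~\ref{lemma: add to basic slice} against a single, completely understood example where both sides can be computed independently. The natural candidates are the contact structures $(T^3, \xi_n)$, since their twisted contact invariants are controlled by Proposition~\ref{prop: nonzero} and the underlying Floer homology $\underline{\widehat{HF}}(-T^3; \M)$ is explicitly known. Concretely, I would realize $\xi_{n+1}$ on $T^3$ as the result of a full Lutz twist applied to $\xi_n$ along a pre-Lagrangian torus $T$ with $[T]$ generating the relevant $\Z$ inside $H_2(T^3)$, exactly as in the proof of Lemma~\ref{lemma: add to basic slice}; then Theorem~\ref{thm: main} gives
\begin{equation*}
\underline{c}(T^3, \xi_{n+1}; \M) = p(e^{[T]}) \cdot \underline{c}(T^3, \xi_n; \M)
\end{equation*}
in $\underline{\widehat{HF}}(-T^3; \M)/\M^\times$, with $\M = \Z[t,t^{-1}]$ and $t = e^{[T]}$.

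The heart of the argument is then to compute both $\underline{c}(T^3, \xi_n; \M)$ and $\underline{c}(T^3, \xi_{n+1}; \M)$ as explicit elements of $\underline{\widehat{HF}}(-T^3; \Z[t,t^{-1}])$ and read off the ratio. I would use the standard Heegaard diagram for $T^3$ together with the description of the contact class as a specific intersection point, tracking the twisted coefficients via the additive assignment valued in $H_2(T^3)$ as set up in Section~\ref{section: preliminaries}. The key structural input is that adding a full Lutz twist corresponds, on the level of the decomposition $T^3 = N_1 \cup N_2$, to replacing a basic slice $\xi|_{N_1}$ by $\xi|_{N_1}'$, and by the gluing/tensor-product map (Theorem, Gluing Map, Twisted Coefficients Version) the whole computation localizes to the thickened torus $N = T^2 \times [0,1]$ with the sutures $\mathrm{slope}(\Gamma_{T_0}) = 0$, $\mathrm{slope}(\Gamma_{T_1}) = \infty$. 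Using the explicit Heegaard diagram for $(N,\Gamma)$ from \cite{HKM2} (its four intersection points, no holomorphic disks, each generating an $\M$-summand), one identifies the contact generator of the basic slice and the contact generator of $\xi'$ within $\underline{SFH}(-N,-\Gamma;\M)$, and compares them.

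The step I expect to be the main obstacle is making the comparison of the two contact classes genuinely \emph{quantitative} with the twisted coefficients, i.e.\ producing the factor $t-1$ rather than merely some unit multiple of $(t-1)q(t)$. Two features must be controlled simultaneously: first, that $q(t)$ is a unit in $\Z[t,t^{-1}]$, i.e.\ $q(t) = \pm t^k$, which one gets by a Newton-polytope / degree argument once the contact classes are located in the right Spin$^c$ summands and the top/bottom coefficients are seen to be $\pm 1$; and second, that after normalizing by units (recall the invariant lives in $\underline{\widehat{HF}}/\M^\times$) the representative $p(t) = t-1$ is singled out — this amounts to a careful bookkeeping of the reference choices (the additive assignment, the basepoint, the isolated $S^1 \times S^2$ factor from the gluing, where $e^{[\{pt\}\times S^2]}$ acts trivially). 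A clean way to finish, avoiding a bare-hands count of holomorphic triangles, is to invoke the change-of-coefficients exact sequence~\eqref{eqn: tor for PID} over the PID $\L = \Z[t,t^{-1}]$: set $\M = \Z[t,t^{-1}]/(t-1)^n = \Z[s]/(s^n)$ with $s = t-1$, so that $\underline{c}(T^3,\xi_n;\M)$ must vanish (it is $p(t)^n$ times the untwisted class, up to the first Lutz twist killing it over $\Z$), while the non-torsion statement of Proposition~\ref{prop: nonzero} over $\Z[t,t^{-1}]$ forces $(t-1)^n \mid p(t)^{?}$ in a way that, combined with the already-established divisibility $ (t-1) \mid p(t)$ and the unit-normalization, leaves $p(t) = t-1$ as the only possibility. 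I would present the direct Heegaard-diagram computation as the primary route and this homological-algebra consistency check as confirmation.
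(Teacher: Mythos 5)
Your proposal identifies the right test example ($T^3$, using $\underline{c}(T^3,\xi_n;\M)=p(t)^{n-1}\underline{c}(T^3,\xi_1;\M)$) but the two routes you describe to actually extract $p(t)$ both have gaps; the paper uses a third, four-dimensional approach.

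\emph{What the paper does.} The paper avoids computing contact classes directly. It realizes the Lutz twist as a symplectic cobordism $W_0\cong E(1)-N(F_1)-N(F_2)$ from $(T^3,\xi_2)$ to $(T^3,\xi_3)$, caps on both sides with $W_1 = E(2)-B^4-N(F)$ and $W_2=N(F)-B^4$, and compares the resulting closed manifolds $E(2)=W_1\cup W_2$ and $E(3)=W_1\cup W_0\cup W_2$. The composition law (Theorem~\ref{thm: comp law}) together with $\Phi_{E(2);\mathcal{M}_\omega}=1$ and $\Phi_{E(3);\mathcal{M}_\omega}=t-1$ (Lemma~\ref{lemma: En}), the $b_2^+$ estimate for $W_1$ (Lemma~\ref{lemma: b2plus}), and the structure of $\underline{HF}^+(T^3;\M)$ (Proposition~\ref{prop: summary}, Lemma~\ref{lemma: where is contact}) force the $W_0$-map on the $\Z[\R]$-summand to be multiplication by $t-1$; that map is $p(t)$. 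This bypasses any holomorphic-curve count in a Heegaard diagram.

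\emph{The gap in your primary route.} You propose to read off $p(t)$ by locating $\underline{c}(\xi)$ and $\underline{c}(\xi')$ in the four-generator sutured diagram of \cite[Figure~15]{HKM2}. But that diagram is adapted to a basic slice and exhibits the contact class of $\xi$; the contact class of $\xi'$ (basic slice after a full Lutz twist) is defined via a different, larger partial open book. To compare them you must construct a second diagram adapted to $\xi'$, build a sequence of stabilizations/handleslides connecting the two, and count holomorphic triangles in a triple diagram with twisted coefficients --- this is exactly the ``bare-hands count'' you hope to avoid, and it is not addressed. Nothing in the fact that $\underline{SFH}(-N,-\Gamma;\M)\cong\M^4$ (each summand generated, no differentials) tells you the coefficient $p(t)$: both classes lie in the same rank-one $\M$-summand and the diagram alone cannot distinguish $t-1$ from, say, $(t-1)(t-2)$.

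\emph{The gap in your consistency check.} Taking $\M=\Z[s]/(s^n)$ with $s=t-1$ does not force $\underline{c}(T^3,\xi_n;\M)$ to vanish: the vanishing theorem of \cite{GHV} applies only to the $\L$-module $\Z=\Z[s]/(s)$ on which $t$ acts trivially, not to $\Z[s]/(s^n)$ for $n>1$. And even if one somehow knew that $\underline{c}$ vanished over the modules you name, the argument could only ever constrain the $(t-1)$-adic valuation of $p(t)$; it cannot exclude a spurious factor $q(t)$ coprime to $t-1$. To rule out such a $q$ one would need nonvanishing of $\underline{c}(T^3,\xi_n)$ over $\Z[t,t^{-1}]/(f)$ for irreducible $f\nmid (t-1)$, which Proposition~\ref{prop: nonzero} does not provide --- non-torsion over $\Z[t,t^{-1}]$ does not survive tensoring with $\Z[t,t^{-1}]/(f)$ unless one also controls the torsion of the ambient Floer group. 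In short, your homological-algebra argument collapses to the already-known statement that $(t-1)\mid p(t)$ and $p(t)\neq 0$.
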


To pin down the polynomial we compute an example. A natural
candidate is $T^3$, whose tight contact structures $\xi_n$ are all
obtained by applying $(n-1)$ full Lutz twists to the Stein fillable
contact structure $\xi_1$.

\subsection{Calculation of $\underline{HF}^+(T^3;\M)$}
Fix a primitive cohomology class $[\omega] \in H^2(T^3;\Z)$ and
define the $\Z[H_2(T^3;\Z)]$--module $\M$ as $\Z[t,t^{-1}]$ endowed
with the $H_2(T^3;\Z)$-action $c \cdot t=t^{\langle \omega, c
\rangle}$. In this subsection we calculate
$\underline{HF}^+(T^3;\M)$ and determine the location of the contact
invariant $\underline{c}(T^3,\xi_n;\M)$. The computation in this
subsection is similar to that of \cite[Proposition~8.4]{OSz4}.

We will only be interested in the Spin$^c$ structure
$\mathfrak{s}_0$ which satisfies $c_1(\mathfrak{s}_0)=0$. The
Heegaard Floer homology classes for the other Spin$^c$ structures
vanish by the adjunction inequality.

\begin{lemma}\label{lemma: part0}
$\underline{HF}^{\infty}_j(T^3;\M) \cong \Z^2$ for all half-integers
$j$.
\end{lemma}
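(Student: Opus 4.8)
The plan is to compute $\underline{HF}^\infty(T^3;\M)$ directly from an admissible Heegaard diagram, following the model computation of $HF^\infty(T^3)$ with untwisted coefficients in \cite{OSz4}. First I would recall the genus-$3$ Heegaard diagram for $T^3$ used by Ozsv\'ath--Szab\'o, in which the three $\alpha$-curves and three $\beta$-curves are arranged so that, after winding one $\beta$-curve to achieve admissibility, the intersection points of $\mathbb{T}_\alpha\cap\mathbb{T}_\beta$ in the torsion Spin$^c$ structure $\mathfrak{s}_0$ come in a controlled list. The first step is to record which intersection points contribute to $\mathfrak{s}_0$ and to identify the relevant homotopy classes $\phi$ of Whitney disks together with their Maslov indices $\mu(\phi)$, the multiplicities $n_z(\phi)$, and — crucially — the twisting exponent $\int_{A(\phi)}\omega$, where $A$ is the chosen additive assignment valued in $H_2(T^3;\Z)$.

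Next I would assemble $\underline{CF}^\infty(T^3,\mathfrak{s}_0;\M)$ as a free $\M[U,U^{-1}]$-module (equivalently, track the $[\mathbf{x},i]$ grading) and write out $\underline{\partial}^\infty$ using the differential formula from the Preliminaries, $\underline{\partial}^\infty[\mathbf{x},i]=\sum_{\mathbf{y}}\sum_{\mu(\phi)=1}\#(\mathcal{M}(\phi)/\R)\, t^{\int_{A(\phi)}\omega}[\mathbf{y},i-n_z(\phi)]$. The point is that passing from $\Z$ to $\M=\Z[t,t^{-1}]$ replaces certain cancelling pairs of disks (which, with untwisted coefficients, contribute $\pm1\mp1=0$ to $HF^\infty(T^3)$, producing its characteristically large rank) by terms like $t^a - t^b$ with $a\neq b$, which are \emph{nonzero} in $\M$. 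So the twisted chain complex is genuinely "smaller" than the untwisted one. I expect the upshot to be that, after a change of basis over $\M$, the complex splits into two-step pieces of the form $\M\xrightarrow{(t^a-t^b)}\M$ together with exactly two surviving generators per $(U$-equivariant$)$ degree, each contributing a copy of $\Z$ — because $\M/(t^a-t^b)\cong\Z$ when $\gcd$ considerations work out, or because those generators are genuine cycles not hit by any differential. Since the computation is $2$-periodic in the internal $i$-grading (as $U$ acts invertibly on $HF^\infty$), this yields $\underline{HF}^\infty_j(T^3;\M)\cong\Z^2$ for every half-integer $j$. One clean way to organize this is via the change-of-coefficients spectral sequence of Subsection~\ref{subsection: change of coefficients}: one first computes $\underline{HF}^\infty(T^3;\L)$ over $\L=\Z[H_2(T^3;\Z)]\cong\Z[t_1^{\pm1},t_2^{\pm1},t_3^{\pm1}]$ — which is the "universal" twisted group and is known to be a rank-$2$ free $\L$-module in each degree by \cite{OSz4} or a direct diagram computation — and then applies $-\otimes_\L\M$; since $\M$ is obtained from $\L$ by the primitive homomorphism sending $c\mapsto t^{\langle\omega,c\rangle}$, and the relevant $\mathrm{Tor}$ terms vanish because $\underline{HF}^\infty(T^3;\L)$ is $\L$-free, one gets $\underline{HF}^\infty(T^3;\M)\cong\underline{HF}^\infty(T^3;\L)\otimes_\L\M\cong\M^2\otimes$ nothing extra $=\Z^2$ wait — more precisely $\cong\M\otimes_\L\L^2$ restricted appropriately; I would phrase it as: the free $\L$-module structure forces $\underline{HF}^\infty_j(T^3;\M)\cong \big(\underline{HF}^\infty_j(T^3;\L)\big)\otimes_\L\M$, and then a direct count shows this has rank $2$ over $\Z$. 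I would pick whichever of the two routes (direct diagram count vs. spectral sequence from the $\L$-computation) is cleanest to write.

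The main obstacle is bookkeeping the twisting exponents $\int_{A(\phi)}\omega$ correctly: one must fix the additive assignment $A$ carefully, check admissibility after winding, and verify that the exponents appearing in the differential are such that the resulting quotient modules $\M/(\text{differential entries})$ are each isomorphic to $\Z$ and not to $0$ or to some $\Z[t^{\pm1}]/(t^k-1)$ with $k>1$. In other words, the real content is showing the twisted differential is "as nondegenerate as possible" — every potentially-cancelling pair of holomorphic disks genuinely gets separated by distinct powers of $t$ — so that the huge untwisted $HF^\infty(T^3)$ (which has rank $6$ in appropriate degrees) collapses to rank $2$. This is exactly the twisted analogue of \cite[Proposition~8.4]{OSz4}, so I would lean on that computation, importing its identification of disks and merely tracking the extra $t$-weights.
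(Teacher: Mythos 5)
Your second route (the change-of-coefficients spectral sequence applied to $\underline{HF}^\infty(T^3;\L)$, with $\L=\Z[H_2(T^3;\Z)]$) is the one the paper takes, but the argument as you've sketched it hinges on a claim that is false and would give the wrong answer. You assert that $\underline{HF}^\infty(T^3;\L)$ is ``a rank-$2$ free $\L$-module in each degree,'' so the $\mathrm{Tor}$ terms vanish and the computation reduces to $-\otimes_\L\M$. In fact, by \cite[Theorem~10.12]{OSz2}, $\underline{HF}^\infty(T^3;\L)\cong\Z[U,U^{-1}]$: in each degree $j\equiv\tfrac12\pmod 2$ it is a single copy of $\Z$ on which $\L$ acts through the augmentation $\L\to\Z$, and it is zero otherwise. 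This is as far from $\L$-free as possible — it is a cyclic torsion $\L$-module annihilated by the entire augmentation ideal. If it \emph{were} $\L$-free, tensoring with $\M=\Z[t_3^{\pm1}]$ would produce a free $\M$-module, of infinite rank over $\Z$, not $\Z^2$; so your own claimed conclusion is incompatible with your stated hypothesis (the ``wait — more precisely'' hedge in your write-up signals you sensed this, but it doesn't resolve it).

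The genuine content of the proof is precisely the computation you propose to skip: one must compute $\mathrm{Tor}^i_\L(\Z,\M)$ for $i=0,1,2$. The paper does this via a Koszul-type free resolution
\[
0\longrightarrow\L\longrightarrow\L\oplus\L\longrightarrow\L\longrightarrow\Z[t_3^{\pm1}]\longrightarrow 0
\]
built from $t_1-1$ and $t_2-1$; after tensoring with $\Z$ over $\L$ all the maps die, so $\mathrm{Tor}^0\cong\Z$, $\mathrm{Tor}^1\cong\Z^2$, $\mathrm{Tor}^2\cong\Z$. One then checks that in the $E^2$ page these nonzero columns sit in rows $j\equiv\tfrac12\pmod2$ with zero rows in between, so all higher differentials vanish for degree reasons, and the three contributions $\Z$, $\Z^2$, $\Z$ assemble along antidiagonals to give $\Z^2$ in every half-integer total degree (using that $\Z$ is free, so the relevant extension splits). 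Your first route (redoing the Heegaard-diagram count with $t$-weights) is plausible but is not carried out, and in any case it reproves \cite[Theorem~10.12]{OSz2} rather than using it. I'd recommend replacing the freeness claim with the actual $\mathrm{Tor}$ computation and the degree-reasons argument; without it the lemma does not follow.
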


\begin{proof}
In \cite[Theorem~10.12]{OSz2}, it was shown that
$$\underline{HF}^{\infty}(T^3;\Z[H_2(T^3;\Z)])\cong \Z[U,U^{-1}],$$
where $U$ has degree $-2$. Also, from the computation of
$\underline{HF}^+(T^3;\Z[H_2(T^3;\Z)])$ in \cite[Section 8.4]{OSz4},
one easily sees that the nonzero elements sit in degrees $j \equiv
\frac 12$ mod $2$. In order to prove our lemma, we apply the
change-of-coefficients spectral sequence
\[ {\rm Tor}^i_{\Z[H_2(T^3;\Z)]}(\underline{HF}_j^{\infty}
(T^3;\Z[H_2(T^3;\Z)]),\M) \Rightarrow
\underline{HF}^{\infty}_{i+j}(T^3; \M). \]

Choose coordinates on $H_2(T^3;\Z)$ so that the group algebra
$\Z[H_2(T^3;\Z)]$ is identified with $\L=\Z[t_1, t^{-1}_1, t_2,
t^{-1}_2, t_3, t^{-1}_3]$ and $\M$ is identified with $\Z[t_3,
t^{-1}_3]$ on which $t_1$ and $t_2$ act trivially. In order to
compute the $E^2$-term of the spectral sequence, we take a free
resolution of $\Z[t_3, t^{-1}_3]$:
\begin{equation} \label{eqn: flat}
0 \longrightarrow \L\stackrel{f} \longrightarrow \L\oplus \L
\stackrel{g} \longrightarrow \L\longrightarrow \Z[t_3, t^{-1}_3]
\longrightarrow 0,
\end{equation} where
\[ f(1)= (t_2-1,t_1-1),
\quad g(1,0)= t_1-1, \quad \text{and} \quad g(0,1)= t_2-1. \]
Observe that $\operatorname{Im}(f)=\operatorname{Ker}(g)$ follows
from the fact that $\L$ is a unique factorization domain.

If we truncate the last term in Equation~\ref{eqn: flat} and tensor
with $\Z$ over $\L$, then we obtain the complex
\[ 0 \longrightarrow \Z \longrightarrow \Z \oplus
\Z \longrightarrow \Z \longrightarrow 0,
\] where all maps are trivial. Thus

\[ {\rm Tor}^i_{\Z[H_2(T^3;\Z)]}(\underline{HF}^{\infty}_j
(T^3;\Z[H_2(T^3;\Z)]),\M) \cong \left\{
\begin{array}{ll}
\Z, & \mbox{if } i=0;\\
\Z^2, & \mbox{if } i=1;\\
\Z, & \mbox{if } i=2\\
\end{array}
\right. \]
if $j \equiv \frac 12$ mod $2$, and $0$ otherwise.

Therefore the $E^2$-term of the spectral sequence has the form
\[ \begin{matrix}
\vdots & \vdots & \vdots \\
\Z & \Z^2 & \Z \\
0 & 0 & 0 \\
\Z & \Z^2 & \Z \\
0 & 0 & 0 \\
\vdots & \vdots & \vdots
\end{matrix} \]
and all the higher differentials are trivial for degree reasons, so
$E^2 \cong E^{\infty}$. For $i$ odd and $j \equiv \frac 12$ mod $2$
we have $\underline{HF}^{\infty}_{i+j}(T^3; \M) \cong \Z^2$, and for
$i$ even and $j \equiv \frac 12$ mod $2$ the $E^{\infty}$-term gives
an exact sequence
\[ 0 \longrightarrow \Z \longrightarrow \underline{HF}^{\infty}_{i+j}
(T^3; \M) \longrightarrow \Z \longrightarrow 0, \] so
$\underline{HF}^{\infty}_{i+j}(T^3; \M) \cong \Z^2$ in this case
also, since $\Z$ is a free Abelian group.
\end{proof}

Let $M\{ a,b,c \}$ denote the $3$-manifold obtained by surgery on
the Borromean rings with surgery coefficients $a$, $b$, and $c$. The
$3$-torus $T^3$ is homeomorphic to $M\{ 0,0,0 \}$.

\begin{lemma} \label{lemma: part1}
$\underline{HF}^+_j(T^3;\M) \cong \underline{HF}^{\infty}_j(T^3;\M)$
for all $j \geq \frac 12$, and $\underline{HF}^+_j(T^3;\M) =0$ for
all $j \leq - \frac 32$.
\end{lemma}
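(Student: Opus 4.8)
The plan is to run the long exact sequence relating $\underline{HF}^+$, $\underline{HF}^\infty$, and $\underline{HF}^-$ (equivalently, the exact triangle connecting $\underline{HF}^-$, $\underline{HF}^\infty$, $\underline{HF}^+$ for the Spin$^c$ structure $\mathfrak{s}_0$ with $c_1(\mathfrak{s}_0)=0$), together with a vanishing statement for $\underline{HF}^+$ in all sufficiently negative degrees. The key input is the grading information already established: by Lemma~\ref{lemma: part0}, $\underline{HF}^\infty_j(T^3;\M)\cong\Z^2$ for every half-integer $j$; and from the untwisted (or $\Z[H_2]$-twisted) computation of \cite[Section~8.4]{OSz4}, carried through the change-of-coefficients spectral sequence, the group $\underline{HF}^+_j(T^3;\M)$ is supported only in degrees $j\equiv\frac12\pmod 2$ and is bounded below. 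First I would pin down the bottom of $\underline{HF}^+$: since $\underline{HF}^+$ is the cokernel (up to the connecting map) of $\underline{HF}^-\to\underline{HF}^\infty$, and $\underline{HF}^-$ is bounded above while $\underline{HF}^\infty$ is $2$-periodic, the map $\underline{HF}^-_j\to\underline{HF}^\infty_j$ is an isomorphism for $j$ very negative, forcing $\underline{HF}^+_j=0$ for $j\le-\frac32$ (the precise cutoff coming from the grading shift conventions for $M\{0,0,0\}$, matched to the untwisted case).

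For the other half of the statement — that $U\colon\underline{HF}^+_{j+2}\to\underline{HF}^+_j$ or, what we want, that $\underline{HF}^+_j\cong\underline{HF}^\infty_j$ for $j\ge\frac12$ — I would use the long exact sequence
\[
\cdots\to\underline{HF}^-_j(T^3;\M)\to\underline{HF}^\infty_j(T^3;\M)\to\underline{HF}^+_j(T^3;\M)\to\underline{HF}^-_{j-1}(T^3;\M)\to\cdots
\]
(after shifting so the map $\underline{HF}^\infty\to\underline{HF}^+$ preserves degree, as is standard for these manifolds). The point is that $\underline{HF}^-$ lives in degrees $\le$ some bound, so for $j\ge\frac12$ sufficiently large the map $\underline{HF}^\infty_j\to\underline{HF}^+_j$ is surjective; and by the bounded-below vanishing just established, $\underline{HF}^-_{j-1}$ actually vanishes in the relevant range while $\underline{HF}^-_j\to\underline{HF}^\infty_j$ has image zero, so $\underline{HF}^\infty_j\to\underline{HF}^+_j$ is also injective. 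Combined with the fact that both groups are free abelian of rank $2$ in each degree $j\equiv\frac12\pmod2$ (and zero otherwise), this upgrades to an isomorphism for all $j\ge\frac12$. The $2$-periodicity of $\underline{HF}^\infty$ via $U$ then propagates the isomorphism up the entire tower.

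The main obstacle is bookkeeping the absolute $\Q$-gradings and the degree shift in the exact triangle so that the numerical cutoffs ($j\ge\frac12$, $j\le-\frac32$) come out exactly as stated rather than off by a shift; this is where I would lean heaviest on the parallel untwisted computation in \cite[Section~8.4]{OSz4} and on the identification $T^3\cong M\{0,0,0\}$, transporting the known $\Z$-coefficient answer through the change-of-coefficients spectral sequence \eqref{change-coefficients} (whose $E^2$-page was already worked out in the proof of Lemma~\ref{lemma: part0}) to see that the support and rank of $\underline{HF}^+(T^3;\M)$ in each degree match what is claimed. A secondary subtlety is confirming that all higher differentials in that spectral sequence vanish for $\underline{HF}^+$ as they did for $\underline{HF}^\infty$ — again a grading-parity argument, since the nonzero rows are spaced two apart and the differentials $d_k$ change total degree by $k-1$.
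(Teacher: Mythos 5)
Your overall strategy — exact triangles plus $2$-periodicity of $\underline{HF}^\infty$ plus a lower bound on the support of $\underline{HF}^+$ — is the right shape, but the proposal glosses over precisely the step that carries all the difficulty, and the route you sketch through $\underline{HF}^-$ quietly begs the question.

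The paper's proof does not run the change-of-coefficients spectral sequence on $\underline{HF}^+$, nor the $\underline{HF}^-\!\to\underline{HF}^\infty\!\to\underline{HF}^+$ triangle. Instead it bounds the support of $\underline{\widehat{HF}}(T^3;\M)$ directly via the Borromean-rings surgery exact sequence from \cite[Theorem~9.21]{OSz2}, where the two outer terms are \emph{untwisted} groups $\widehat{HF}(M\{0,0,\infty\};\Z)[t,t^{-1}]$ and $\widehat{HF}(M\{0,0,1\};\Z)[t,t^{-1}]$ and only the middle term carries $\M$. This localizes $\underline{\widehat{HF}}(T^3;\M)$ to degrees $\{1/2,-1/2,-3/2\}$. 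The hard part is then to kill the degree $-3/2$ piece: the paper reduces mod a prime $p$ so that $\F_p[t,t^{-1}]$ is a PID, applies the structure theorem, and specializes to the residue field of an irreducible factor, where the orientation-reversing diffeomorphism of $T^3$ forces symmetry $\underline{\widehat{HF}}_{-3/2}\cong\underline{\widehat{HF}}_{3/2}=0$. Only after this is it straightforward to read off from the $U$-triangle $\underline{\widehat{HF}}\to\underline{HF}^+\xrightarrow{U}\underline{HF}^+$ that $U$ is an isomorphism for $k\geq 1/2$ and $k\leq -7/2$, from which both halves of the lemma follow.

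Your proposal has two concrete gaps. First, the sentence \emph{``forcing $\underline{HF}^+_j=0$ for $j\leq -3/2$, the precise cutoff coming from the grading shift conventions for $M\{0,0,0\}$, matched to the untwisted case''} is exactly the nontrivial assertion, and ``matching to the untwisted case'' is not a proof: the change of coefficients from $\Z[H_2(T^3)]$ to $\M$ produces nonzero $\mathrm{Tor}^1$ and $\mathrm{Tor}^2$ terms (as the proof of Lemma~\ref{lemma: part0} shows), and these shift the grading, so whether anything lands in degree $-3/2$ is not decided by parity alone. Indeed the paper's Borromean argument only bounds the support to $\{1/2,-1/2,-3/2\}$ and then needs the PID/orientation-reversal trick to rule out the last degree; nothing in your sketch substitutes for that step. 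Second, in the $\underline{HF}^-\!\to\underline{HF}^\infty\!\to\underline{HF}^+$ argument for $j\geq 1/2$ you assert that ``$\underline{HF}^-_{j-1}$ actually vanishes in the relevant range while $\underline{HF}^-_j\to\underline{HF}^\infty_j$ has image zero'' --- but you have established a lower bound on the support of $\underline{HF}^+$, not an upper bound on the support of $\underline{HF}^-$, and the two are not interchangeable; the precise top degree of $\underline{HF}^-$ is exactly the kind of data the whole computation is meant to produce. The paper avoids this by working with $\underline{\widehat{HF}}$, which is supported in only finitely many degrees and can be completely pinned down. To repair your approach you would need to independently determine $\underline{\widehat{HF}}(T^3;\Z[H_2(T^3)])$ as a graded $\Z[H_2]$-module, track the grading shifts through $\mathrm{Tor}^1$ and $\mathrm{Tor}^2$, and confirm that nothing survives in degree $-3/2$ --- which is comparable work to what the paper actually does.
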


\begin{proof}
From \cite[Theorem 9.21]{OSz2}, we have the exact sequence:
\begin{equation} \label{eqn: borromean}
\ldots \longrightarrow \widehat{HF}(M \{ 0,0,\infty \};\Z)
[t,t^{-1}] \longrightarrow \underline{\widehat{HF}} (M \{ 0,0,0
\};\M)\longrightarrow
\end{equation}
\begin{equation*}
\longrightarrow \widehat{HF}(M \{ 0,0,1 \};\Z)[t,t^{-1}]
\longrightarrow \ldots,
\end{equation*}
where the two central maps decrease the degree by $\frac 12$ (see
\cite[Lemma 3.1]{OSz4}).  Now, according to the proof of
\cite[Proposition 8.4]{OSz4}, $$\widehat{HF}(M \{ 0,0,1 \};\Z)\cong
\left\{
\begin{array}{ll}
\Z^2, & \mbox{if } j=-1,0;\\
0, & \mbox{otherwise}. \end{array} \right.$$  Also
$M\{0,0,\infty\}=(S^1\times S^2)\# (S^1\times S^2)$, so
$$\widehat{HF}_j(M\{0,0,\infty\};\Z)\cong \left\{
\begin{array}{ll}
\Z, & \mbox{if } j=1;\\
\Z^2, & \mbox{if } j=0;\\
\Z, & \mbox{if } j=-1;\\ 0, & \mbox{otherwise}.
\end{array}\right.$$ Therefore, by Equation~\ref{eqn: borromean},
$\underline{\widehat{HF}}(T^3;\M)$ is supported in degrees ${1\over
2}, -{1\over 2}, -{3\over 2}$.

We now claim that $\underline{\widehat{HF}}_{-{3\over
2}}(T^3;\M)=0$. Suppose on the contrary that
$\underline{\widehat{HF}}_{-{3\over 2}}(T^3;\M)\not=0$. If we forget
the action of $t$ and view $\underline{\widehat{CF}}(T^3;
\Z[t,t^{-1}])$ as a $\Z$-module, then it is easy to see that there
is some prime $p$ for which $\underline{\widehat{HF}}_{-{3\over
2}}(T^3; \F_p[t,t^{-1}]) \not=0$, where $\F_p$ is the field of $p$
elements. The advantage of $\F_p[t,t^{-1}]$ is that it is a PID.
Now, if $\mathbb{K}$ is an $\F_p[t,t^{-1}]$-module, then we can
apply Equation~\ref{eqn: tor for PID} to obtain
$$\underline{\widehat{HF}}_{- \frac 32}(T^3; {\mathbb K}) \cong
\underline{\widehat{HF}}_{- \frac 32}(T^3; \F_p[t,t^{-1}])
\otimes_{\F_p[t,t^{-1}]} {\mathbb K},$$ in view of the fact that
$\underline{\widehat{HF}}_{- \frac 52}(T^3; \F_p[t,t^{-1}])=0$. If
${\mathbb K}$ is a field, then the existence of an
orientation-reversing diffeomorphism of $T^3$ forces
$\underline{\widehat{HF}}_{- \frac 32}(T^3; {\mathbb K}) \cong
\underline{\widehat{HF}}_{\frac 32}(T^3; {\mathbb K}) =0$ (see
\cite[Proposition 7.11]{OSz:triangles}). Since $\F_p[t,t^{-1}]$ is a
PID, we can decompose
\[ \underline{\widehat{HF}}_{- \frac 32}(T^3; \F_p[t,t^{-1}])
\cong \F_p[t,t^{-1}]^{n_0} \oplus \F_p[t,t^{-1}]/ (f_1^{n_1}) \oplus
\ldots \oplus \F_p[t,t^{-1}]/(f_k^{n_k}), \] where the $f_i$ are
irreducible. In particular, if we take ${\mathbb K}=
\F_p[t,t^{-1}]/(f_1)$, then we have $$\underline{\widehat{HF}}_{-
\frac 32}(T^3; \F_p[t,t^{-1}]) \otimes_{\F_p[t,t^{-1}]} {\mathbb K}
\neq 0,$$ which is a contradiction. This proves the claim.

Next, from the exact sequence $$ \ldots \longrightarrow
\underline{\widehat{HF}}_{k+2}(T^3;\M) \longrightarrow
\underline{HF}^+_{k+2}(T^3;\M) \stackrel{U} \longrightarrow
\underline{HF}^+_k(T^3;\M) \longrightarrow$$
$$\longrightarrow \underline{\widehat{HF}}_{k+1}(T^3;\M)
\longrightarrow \ldots,$$ we see that $U$ is an isomorphism if $k
\geq \frac 12$ or if $k \leq - \frac 72$. Since $HF^+$ is isomorphic
to $HF^{\infty}$ in sufficiently high degrees and zero in
sufficiently low degrees, the lemma follows.
\end{proof}

\begin{lemma} \label{lemma: part2}
$\underline{HF}^+_{- \frac 12}(T^3;\M) \cong \Z^2 \oplus
\Z[t,t^{-1}]$.
\end{lemma}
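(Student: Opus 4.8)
The plan is to extract the group $\underline{HF}^+_{-1/2}(T^3;\M)$ from the long exact sequence relating $\widehat{HF}$ and $HF^+$, together with the information already assembled in Lemmas~\ref{lemma: part0} and~\ref{lemma: part1}. First I would record what the previous two lemmas give us: $\underline{HF}^\infty_j(T^3;\M)\cong\Z^2$ for all half-integers $j$, and $\underline{HF}^+_j(T^3;\M)\cong\Z^2$ for $j\ge 1/2$ while $\underline{HF}^+_j(T^3;\M)=0$ for $j\le -3/2$. From the proof of Lemma~\ref{lemma: part1}, we also know that $\underline{\widehat{HF}}(T^3;\M)$ is supported in degrees $1/2$ and $-1/2$ (the $-3/2$ part was shown to vanish), and that $U\colon\underline{HF}^+_{1/2}\to \underline{HF}^+_{-3/2}=0$ is part of the picture. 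So the only degree in which $HF^+$ has not yet been pinned down is exactly $-1/2$.

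Next I would run the long exact sequence
\[
\ldots \to \underline{\widehat{HF}}_{3/2}(T^3;\M)\to \underline{HF}^+_{3/2}(T^3;\M)\xrightarrow{U}\underline{HF}^+_{-1/2}(T^3;\M)\to \underline{\widehat{HF}}_{1/2}(T^3;\M)\to \underline{HF}^+_{1/2}(T^3;\M)\xrightarrow{U}\underline{HF}^+_{-3/2}(T^3;\M)\to\ldots
\]
Here $\underline{HF}^+_{-3/2}=0$ and $\underline{\widehat{HF}}_{3/2}=0$, so the sequence splits off a short exact sequence
\[
0\to \underline{HF}^+_{3/2}(T^3;\M)\xrightarrow{U}\underline{HF}^+_{-1/2}(T^3;\M)\to \underline{\widehat{HF}}_{1/2}(T^3;\M)\to 0.
\]
We have $\underline{HF}^+_{3/2}(T^3;\M)\cong\Z^2$ from Lemma~\ref{lemma: part1}, so it remains to identify $\underline{\widehat{HF}}_{1/2}(T^3;\M)$ and then show the extension splits in the stated way. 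For the first, I would return to the Borromean surgery exact sequence~\eqref{eqn: borromean}, now in top degree $1/2$: the incoming term is $\widehat{HF}_1(M\{0,0,\infty\};\Z)[t,t^{-1}]\cong\Z[t,t^{-1}]$ and the outgoing term (after the degree shift) involves $\widehat{HF}(M\{0,0,1\};\Z)[t,t^{-1}]$, which is zero in the relevant degree; chasing this, together with the untwisted model computation $\underline{\widehat{HF}}(S^1\times S^2;\Z[t,t^{-1}])$ recalled in the proof of the Gluing Map (twisted version) — where $\partial y=(t-1)x$ — should yield $\underline{\widehat{HF}}_{1/2}(T^3;\M)\cong\Z[t,t^{-1}]$, the free part coming from the $H_2$-direction along which $\omega$ pairs nontrivially.

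Finally I would argue the extension $0\to\Z^2\to \underline{HF}^+_{-1/2}(T^3;\M)\to\Z[t,t^{-1}]\to 0$ splits. Since $\Z[t,t^{-1}]$ is not projective over itself as an $\M$-module in the needed sense but the extension is one of abelian groups (or better, one can observe $\Z[t,t^{-1}]$ is a free $\Z$-module, hence the sequence of underlying abelian groups splits), we get $\underline{HF}^+_{-1/2}(T^3;\M)\cong\Z^2\oplus\Z[t,t^{-1}]$ as claimed. I expect the main obstacle to be the careful degree-bookkeeping in the Borromean sequence~\eqref{eqn: borromean} — keeping track of the $\tfrac12$-shifts of the connecting maps and confirming that the $\Z[t,t^{-1}]$ summand survives rather than being killed by the differential — since this is where the twisted coefficients genuinely differ from the untwisted calculation in \cite[Section 8.4]{OSz4}. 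A secondary point requiring care is verifying that the $U$-map $\underline{HF}^+_{3/2}\to\underline{HF}^+_{-1/2}$ is injective, which follows from $\underline{\widehat{HF}}_{3/2}(T^3;\M)=0$ but should be stated explicitly.
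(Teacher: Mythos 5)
Your approach has two genuine problems, one of which is a hard error.

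\textbf{The claimed short exact sequence is wrong.} From $\underline{\widehat{HF}}_{3/2}(T^3;\M)=0$ and $\underline{HF}^+_{-3/2}(T^3;\M)=0$, the $U$-long exact sequence truncates to the \emph{four}-term exact sequence
\[
0\to \underline{HF}^+_{3/2}(T^3;\M)\xrightarrow{\ U\ }\underline{HF}^+_{-1/2}(T^3;\M)\xrightarrow{\ \delta\ }\underline{\widehat{HF}}_{1/2}(T^3;\M)\to\underline{HF}^+_{1/2}(T^3;\M)\to 0,
\]
not to a short exact sequence ending at $\underline{\widehat{HF}}_{1/2}$. Indeed, the very fact you cite, $\underline{HF}^+_{-3/2}=0$, forces the map $\underline{\widehat{HF}}_{1/2}\to\underline{HF}^+_{1/2}\cong\Z^2$ to be \emph{surjective}, not zero. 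So $\delta$ is not onto, and $\underline{HF}^+_{-1/2}$ is only an extension of $\ker(\iota)$ by $\Z^2$, where $\iota$ is the (nontrivial) map to $\underline{HF}^+_{1/2}$. Consistent with this, if the answer is $\Z^2\oplus\Z[t,t^{-1}]$ then the four-term sequence forces $\underline{\widehat{HF}}_{1/2}(T^3;\M)\cong\Z^2\oplus\Z[t,t^{-1}]$, not $\Z[t,t^{-1}]$ as you propose.

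\textbf{The identification of $\underline{\widehat{HF}}_{1/2}(T^3;\M)$ is not justified and, as noted, also wrong.} You say you would ``chase'' Equation~(\ref{eqn: borromean}) together with the $S^1\times S^2$ model calculation to get $\Z[t,t^{-1}]$. But from the Borromean sequence at degree $1/2$ you only get
\[
0\to\Z[t,t^{-1}]\to\underline{\widehat{HF}}_{1/2}(T^3;\M)\to(\Z[t,t^{-1}])^2\xrightarrow{\ \ }(\Z[t,t^{-1}])^2\to\cdots,
\]
and pinning down the image in $(\Z[t,t^{-1}])^2$ requires knowing the map $(\Z[t,t^{-1}])^2\to(\Z[t,t^{-1}])^2$ one step further along. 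This is exactly the hard part, and passing to $\widehat{HF}$ \emph{loses} the main tool for controlling that map. The paper's proof stays entirely in $HF^+$ and runs the same surgery triangle there, using two inputs unavailable for $\widehat{HF}$: $U$-equivariance (comparing the degree $-\tfrac12$ piece with the already-computed degrees $\geq\tfrac12$ to pin down $\operatorname{Im}\underline{F_1}^+\cong\Z^2$), and surjectivity of $\underline{F_3}^+\colon(\Z[t,t^{-1}])^2\to\Z[t,t^{-1}]$ plus unique factorization in $\Z[t,t^{-1}]$ to identify $\ker\underline{F_3}^+\cong\Z[t,t^{-1}]$. Your route would need an independent computation of $\underline{\widehat{HF}}_{1/2}(T^3;\M)$ of the same difficulty, and without $U$-equivariance there is no obvious way to do it.

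Your final splitting argument (free $\Z$-module quotient) is fine, and the idea of leveraging Lemmas~\ref{lemma: part0} and~\ref{lemma: part1} is reasonable, but as it stands the proof does not close: fix the sequence to the four-term version, and then you are back to needing the $HF^+$ surgery-triangle analysis that the paper actually carries out.
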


\begin{proof}
First recall that $$HF^+(\#^2(S^1\times S^2);\Z)\cong \Lambda^*
H^1(\#^2(S^1\times S^2);\Z)\otimes \Z[U^{-1}],$$ where $U$ has
degree $-2$. This means that
$$HF^+_j(\#^2(S^1\times S^2);\Z)= \left\{
\begin{array}{ll}
\Z^2, & \mbox{if } j\geq 0;\\
\Z, & \mbox{if } j=-1;\\
0, & \mbox{if } j<-1. \end{array} \right.$$  Next, we calculate that
$$HF^+_j(M\{0,0,1\};\Z)\cong \left\{
\begin{array}{ll}
\Z^2, & \mbox{if } j\geq -1;\\
0, & \mbox{if } j<-1. \end{array} \right.$$ This follows immediately
from
$$\widehat{HF}_j(M\{0,0,1\};\Z)\cong \left\{
\begin{array}{ll}
\Z^2, & \mbox{if } j=0,-1;\\
0, & \mbox{otherwise}.\end{array} \right.$$ and
$HF^\infty_j(M\{0,0,1\};\Z)\simeq \Z^2$ for all $j$
(\cite[Theorem~10.1]{OSz2}).

Now we apply the surgery exact sequence for the triple
$M\{0,0,\infty\}$, $M\{0,0,0\}$, $M\{0,0,1\}$ with twisted
coefficients:
\[ \dots\longrightarrow (\Z[t,t^{-1}])^2 \stackrel{\underline{F_1}^+} \longrightarrow
\underline{HF}^+_{- \frac 12}  (T^3;\M) \stackrel{\underline{F_2}^+}
\longrightarrow  (\Z[t,t^{-1}])^2 \stackrel{\underline{F_3}^+}
\longrightarrow
\Z[t,t^{-1}]\stackrel{\underline{F_1}^+}\longrightarrow 0. \] The
image of $\underline{F_1}^+$ is isomorphic to $\Z^2$ by
$U$--equivariance. Indeed, for $j\geq {1\over 2}$, the long exact
sequence splits as:
$$0\to (\Z[t,t^{-1}])^2\to
(\Z[t,t^{-1}])^2\to \underline{HF}^+_j(T^3;\M)\cong
\Z^2\to 0,$$
$$0\to (\Z[t,t^{-1}])^2\to
(\Z[t,t^{-1}])^2\to \underline{HF}^+_{-{1\over
2}}(T^3;\M)\to \dots,$$ and we can apply $U$ to the top
sequence when $j={3\over 2}$. We now claim that
$\operatorname{Im}(\underline{F_2}^+)=\operatorname{Ker}(\underline{F_3}^+)$
is isomorphic to $\Z[t,t^{-1}]$.  Indeed, since $\underline{F_3}^+$
is surjective, $\underline{F_3}^+$ must map $(1,0)\mapsto f(t)$,
$(0,1)\mapsto g(t)$, where $f,g$ are relatively prime and hence have
no common factors. By unique factorization on $\Z[t,t^{-1}]$,
$\operatorname{Ker}(\underline{F_3}^+)$ must be generated by
$(g(t),f(t))$ and is free since there are no zero divisors. The
sequence splits since $\operatorname{Im}(\underline{F_2}^+)$ is
free.
\end{proof}

Putting together Lemmas~\ref{lemma: part0}, \ref{lemma: part1}, and
\ref{lemma: part2}, we obtain:

\begin{prop} \label{prop: summary} $\mbox{}$
$$\underline{HF}^+_j(T^3;\M)\cong \left\{
\begin{array}{ll}
\Z^2, & \mbox{if } j \geq {1\over 2};\\
\Z^2 \oplus \Z[t,t^{-1}], & \mbox{if } j=-{1\over 2};\\
0, & \mbox{if } j \leq - {3\over 2}.
\end{array} \right.$$
\end{prop}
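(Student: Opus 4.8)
The plan is to prove Proposition~\ref{prop: summary} purely by assembling the three preceding lemmas; there is no new content beyond bookkeeping of the grading ranges, so the ``proof'' is short.

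First I would treat the range $j \geq \tfrac{1}{2}$. Here Lemma~\ref{lemma: part1} gives a canonical isomorphism $\underline{HF}^+_j(T^3;\M) \cong \underline{HF}^{\infty}_j(T^3;\M)$, and Lemma~\ref{lemma: part0} identifies the right-hand side with $\Z^2$ for every half-integer $j$. Composing these two isomorphisms yields $\underline{HF}^+_j(T^3;\M)\cong \Z^2$ for all $j \geq \tfrac12$, which is the first line of the displayed formula.

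Next I would record the remaining two cases. For $j = -\tfrac{1}{2}$, Lemma~\ref{lemma: part2} states outright that $\underline{HF}^+_{-1/2}(T^3;\M)\cong \Z^2\oplus\Z[t,t^{-1}]$, giving the middle line. For $j \leq -\tfrac{3}{2}$, the second assertion of Lemma~\ref{lemma: part1} is precisely the vanishing $\underline{HF}^+_j(T^3;\M)=0$, giving the last line. Finally I would note that, since we restricted at the outset to the Spin$^c$ structure $\mathfrak{s}_0$ with $c_1(\mathfrak{s}_0)=0$ (all other Spin$^c$ summands vanishing by the adjunction inequality), and since the gradings considered are half-integers $j\equiv \pm\tfrac12$ mod $2$, the three ranges $j\geq\tfrac12$, $j=-\tfrac12$, $j\leq-\tfrac32$ exhaust all occupied degrees without overlap; hence the case analysis is complete.

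There is no genuine obstacle here: the only thing to be careful about is the grading convention (half-integer gradings, with nonzero classes in degrees $\equiv \tfrac12$ mod $2$), so that one checks the three stated ranges really do cover every degree in which the group can be nonzero and do not collide at the endpoints $j=\tfrac12$ and $j=-\tfrac32$. Once that is observed, the proposition follows immediately by combining Lemmas~\ref{lemma: part0}, \ref{lemma: part1}, and \ref{lemma: part2}.
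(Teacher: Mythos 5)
Your proposal is correct and matches the paper exactly: the paper itself gives no proof beyond the sentence ``Putting together Lemmas~\ref{lemma: part0}, \ref{lemma: part1}, and \ref{lemma: part2}, we obtain:'' followed by the statement, which is precisely the assembly you describe. Your extra remark that the three grading ranges exhaust all half-integer degrees (the only ones occupied, since the relevant Spin$^c$ structure is torsion) is a reasonable bit of bookkeeping that the paper leaves implicit.
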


Next we identify the location of the contact invariant for $\xi_n$
on $T^3$.

\begin{lemma} \label{lemma: where is contact}
The contact invariant of $(T^3,\xi_n)$ has a non-zero component in
the summand $\Z[t,t^{-1}]$ of $\underline{HF}^+_{-{1\over
2}}(T^3;\M)$.
\end{lemma}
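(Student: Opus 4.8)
The goal is to pin down where $\underline{c}(T^3,\xi_n;\M)$ lives in the decomposition of $\underline{HF}^+_{-1/2}(T^3;\M)\cong \Z^2\oplus\Z[t,t^{-1}]$ from Proposition~\ref{prop: summary}, and the assertion is that its component in the free $\Z[t,t^{-1}]$-summand is nonzero. First I would recall the basic fact, from Proposition~\ref{prop: nonzero} (equivalently Theorem~\ref{thm: weakly fillable}, since $(T^3,\xi_n)$ is weakly fillable by $D^2\times T^2$), that $\underline{c}(T^3,\xi_n;\M)$ is nonzero and \emph{non-torsion} over $\M=\Z[t,t^{-1}]$. Then I would observe that the two summands $\Z^2$ in $\underline{HF}^+_{-1/2}(T^3;\M)$ are annihilated by $t-1$: indeed, tracing through the proof of Lemma~\ref{lemma: part2}, the $\Z^2$ part of $\underline{HF}^+_{-1/2}$ is the image of $\underline{F_2}^+$'s complement coming from $\operatorname{Im}(\underline{F_1}^+)\cong\Z^2$, which is a quotient of $\Z[t,t^{-1}]^2$ by a submodule containing $(t-1)$-multiples — more precisely, the $\Z^2$'s arise as $\Z[t,t^{-1}]/(t-1)$-type pieces (they are finitely generated over $\Z$, hence torsion over $\Z[t,t^{-1}]$). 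So any element with trivial component in the $\Z[t,t^{-1}]$-summand is torsion over $\M$. Since $\underline{c}(T^3,\xi_n;\M)$ is non-torsion, its $\Z[t,t^{-1}]$-component must be nonzero.

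More carefully, the argument I would give runs as follows. The summand $\Z^2\subset \underline{HF}^+_{-1/2}(T^3;\M)$ sits in the image of the map $\underline{F_1}^+\colon (\Z[t,t^{-1}])^2\to \underline{HF}^+_{-1/2}(T^3;\M)$ appearing in the surgery exact sequence in the proof of Lemma~\ref{lemma: part2}; since this image was identified with $\Z^2$ via $U$-equivariance (it is the image of $HF^+_{-1}(M\{0,0,\infty\};\Z)[t,t^{-1}]$, and $HF^+_{-1}(\#^2(S^1\times S^2);\Z)\cong\Z$ with $t$ acting through the connected-sum formula where $e^{[\{pt\}\times S^2]}$ acts as the identity), $t-1$ kills it. The complementary $\Z[t,t^{-1}]$ is $\operatorname{Ker}(\underline{F_3}^+)$, a free rank-one $\Z[t,t^{-1}]$-module. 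Now project $\underline{c}(T^3,\xi_n;\M)$ to the $\Z^2$-factor along this splitting: the image is a torsion element of $\underline{HF}^+_{-1/2}(T^3;\M)$ (it lies in a $\Z$-finitely-generated, hence $\Z[t,t^{-1}]$-torsion, submodule). If the $\Z[t,t^{-1}]$-component vanished, then $\underline{c}(T^3,\xi_n;\M)$ itself would be torsion, contradicting Proposition~\ref{prop: nonzero}. Hence the $\Z[t,t^{-1}]$-component is nonzero, which is exactly the claim.

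**Main obstacle.** The delicate point is making rigorous the claim that the two $\Z^2$-summands are $(t-1)$-torsion, i.e., correctly identifying the $\Z[t,t^{-1}]$-module structure (not just the underlying $\Z$-module structure) on each piece of $\underline{HF}^+_{-1/2}(T^3;\M)$ coming out of the twisted surgery exact sequence of Lemma~\ref{lemma: part2}. One must check that the $t$-action on $HF^+(M\{0,0,\infty\};\Z)[t,t^{-1}]=HF^+(\#^2(S^1\times S^2);\Z)[t,t^{-1}]$ that appears in the sequence is the \emph{untwisted} one (because the relevant homology class of the surgery torus bounds in $M\{0,0,\infty\}$, so $t$ acts trivially there), whence its image in $\underline{HF}^+$ is a $\Z[t,t^{-1}]$-module on which $t$ acts as the identity and is therefore annihilated by $t-1$. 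Granting that — which is exactly the mechanism already used in Section~\ref{section: SFH and twisted} where $e^{[\{pt\}\times S^2]}$ acts trivially — the rest is the short torsion-versus-nontorsion dichotomy above. One could alternatively argue via the change-of-coefficients sequence \eqref{eqn: tor for PID}: the component in $\Z^2$ would survive to $\underline{HF}^+_{-1/2}(T^3;\Z)$ under $t\mapsto 1$ as a torsion-free contribution, while the known vanishing $\underline{c}(T^3,\xi_n;\Z)=0$ for $n\geq 2$ (Theorem~\ref{torsion}) forces its image to be zero, again isolating the contact invariant in the free summand; this variant avoids pinning down the full module structure but only handles $n\geq 2$.
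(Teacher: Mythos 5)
Your route is different from the paper's, and it is essentially correct, though with a gap you should close. The paper identifies the free summand $\Z[t,t^{-1}]\subset\underline{HF}^+_{-{1\over 2}}(T^3;\M)$ with $\underline{HF}_{red}(T^3;\M)$ --- using the $U$-action from Lemma~\ref{lemma: part1} to show $\underline{HF}^\infty_{-{1\over 2}}$ maps isomorphically onto the $\Z^2$ summand --- and then invokes the Ozsv\'ath--Szab\'o result that a weak filling with $b_2^+>0$ yields a contact class with nonzero image in $\underline{HF}_{red}$. Your argument instead observes that the $\Z^2$ summand is $\M$-torsion (it is finitely generated over $\Z$, so every element satisfies a nontrivial polynomial relation over $\Z[t,t^{-1}]$) and appeals to the non-torsion property of the contact class. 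Both arguments exploit the same splitting from Lemma~\ref{lemma: part2}; yours has the virtue of avoiding the $\underline{HF}^\infty/\underline{HF}_{red}$ analysis.

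There is, however, a gap you should acknowledge: Proposition~\ref{prop: nonzero} and Theorem~\ref{thm: weakly fillable}, as stated in the paper, concern the hat-flavored invariant $\underline{c}(T^3,\xi_n;\M)\in\underline{\widehat{HF}}(-T^3;\M)$, whereas the Lemma is about the class in $\underline{HF}^+_{-{1\over 2}}(T^3;\M)$, and you pass silently from one to the other. What you actually need is that $\underline{c}^+$, the image of $\underline{c}$ under $\iota\colon\underline{\widehat{HF}}_{-{1\over 2}}\to\underline{HF}^+_{-{1\over 2}}$, is non-torsion. This does follow, because $\iota$ is injective in degree $-{1\over 2}$: in the exact sequence
\[
\underline{HF}^+_{-{3\over 2}}(T^3;\M)\longrightarrow \underline{\widehat{HF}}_{-{1\over 2}}(T^3;\M)\stackrel{\iota}{\longrightarrow}\underline{HF}^+_{-{1\over 2}}(T^3;\M),
\]
the left-hand group vanishes by Lemma~\ref{lemma: part1}, so $f(t)\,\underline{c}^+=0$ would force $f(t)\,\underline{c}=0$. (Alternatively, cite the $\underline{HF}^+$ form of the non-torsion theorem directly.) With that patch your main argument is complete. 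Your alternative argument via $t\mapsto 1$ and Theorem~\ref{torsion} is shakier: the $\Z^2$ summand does \emph{not} vanish under the change of coefficients $\M\to\Z$, so the vanishing of $c(T^3,\xi_n;\Z)$ does not by itself isolate the free component, and, as you note, it misses $n=1$.
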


\begin{proof}
We claim that $\underline{HF}_{red}(T^3;\M)$ can be identified with
the summand of $\underline{HF}^+_{-{1\over 2}}(T^3;\M)$ isomorphic
to  $\Z[t,t^{-1}]$. For this we use the exact triangle
$$\dots\to \underline{HF}^\infty_j (T^3;\M)
\stackrel{a_j} \to \underline{HF}^+_j (T^3;\M)
\stackrel{b_j}\to \underline{HF}^-_j (T^3;\M)
\to\dots.$$ The left two terms have been computed, and we
would like to compute the image of $b_j$. For large $j$,
$\underline{HF}^\infty_j (T^3;\M) \cong \underline{HF}^+_j (T^3;\M)
\cong \Z^2$. By Lemma~\ref{lemma: part1},
$$U\colon\underline{HF}^+_{j+2}(T^3;\M)
\longrightarrow \underline{HF}^+_j(T^3;\M)$$ is an isomorphism for
$j\geq{1\over 2}$ and is injective for $j=-{1\over 2}$. By
$U$-invariance, it follows that $\underline{HF}^\infty_{-{1\over
2}}(T^3;\M)$ maps isomorphically onto the $\Z^2$-summand of
$\underline{HF}^+_{-{1\over 2}}(T^3;\M)$. Hence
$\underline{HF}_{red}(T^3;\M)\cong \Z[t,t^{-1}]$.

Now, $\underline{c}_{red}(\xi_n;\M) \neq 0$ by
\cite[Theorem~4.1]{OSz3}, since $\xi_n$ has a weak filling with
$b_+>0$, whose symplectic form restricts to $[\omega]$ on $T^3$.
\end{proof}

\subsection{Comparison of $E(2)$ and $E(3)$}

We start with a few preparatory lemmas.

\begin{lemma}
There is a symplectic cobordism $W_0$ from $(T^3, \xi_2)$  to $(T^3,
\xi_3)$ which is diffeomorphic to the elliptic surface $E(1)$ with
the tubular neighborhoods of two regular fibers removed.
\end{lemma}

By the above phrase ``from $(T^3,\xi_2)$ to $(T^3,\xi_3)$'' we mean
that $(T^3,\xi_2)$ is the convex boundary and $(T^3,\xi_3)$ is the
concave boundary.

\begin{proof}
Choose an integral basis $([a],[b])$ of $H_1(T^2;\Z)$, where $a$,
$b$ are simple closed curves, and let $\tau_a$, $\tau_b$ be the
positive Dehn twists around $a$ and $b$. Identify $T^3\cong
\R^3/\Z^3=T^2\times (\R/\Z)$.  Let $L$ be the link which is the
union of $a\times\{{i\over 6}\}$ for $i=0,\dots,5$, and
$b\times\{{i\over 6}+{1\over 12}\}$ for $i=0,\dots,5$. We will use
the framing induced from $T^2\times\{t\}$. It is well-known that
$(\tau_a \tau_b)^6=id$. This means that a $(-1)$-surgery on $T^3$
along all the components of the link $L$ yields $T^3$. The
$4$-dimensional cobordism associated to this surgery admits a
Lefschetz fibration over the annulus with generic fiber $F$
diffeomorphic to $T^2$ and twelve singular fibers corresponding to
the twelve Dehn twists in $(\tau_a \tau_b)^6$ --- this is precisely
$E(1)$ minus the neighborhoods of two generic fibers.

If we consider the contact structure $\xi_3$ on $T^3$, we can make
the link $L$ Legendrian with twisting number $0$ (i.e., make each
component a Legendrian divide).  Hence, by applying Legendrian
$(-1)$-surgery along all the components of $L$, we can endow $W_0$
with the structure of a symplectic cobordism. By a direct
computation (see \cite{LS1}) we can show that the Legendrian surgery
along $L$ decreases the Giroux torsion by $1$, so we obtain $\xi_2$
as the convex boundary of the cobordism.
\end{proof}

Let us write the Ozsv\'ath-Szab\'o $4$-manifold invariant of a
closed, oriented $4$-manifold $X$ as:
$$\Phi_X= \sum_{\mathfrak{s}\in \mbox{\tiny Spin}^c(X)}
\Phi_{X,\mathfrak{s}} ~ t^{\mathfrak{s}-\mathfrak{s}_0},$$ where
$\mathfrak{s}_0$ is a reference Spin$^c$-structure.\footnote{Note
that we are using $\mathfrak{s}-\mathfrak{s}_0$ instead of
$c_1(\mathfrak{s})$. This means the statement of Lemma~\ref{lemma:
En} looks slightly different from the results of \cite{OSz5} and
\cite{JM}.} Also given $[\omega]\in H^2(X;\Z)$, we can form:
$$\Phi_{X;\mo}= \sum_{\mathfrak{s}\in \mbox{\tiny Spin}^c(X)}
\Phi_{X,\mathfrak{s}} ~ t^{\langle \omega\cup
(\mathfrak{s}-\mathfrak{s}_0),[X]\rangle}.$$ Here we take
$\mathfrak{s}_0$ to be the canonical Spin$^c$-structure if $\omega$
is symplectic. We then have the following:

\begin{lemma}\label{lemma: En}
Consider the elliptic surfaces $E(n)$, $n\geq 2$.  If $F$ is a
regular fiber, then $$\Phi_{E(n)}= (T-1)^{n-2},$$ where
$T=t^{PD([F])}$. If $\omega$ is a symplectic form on $E(n)$ arising
from the Lefschetz fibration, so that $[\omega]\in H^2(E(n);\Z)$ and
$[\omega|_F]\in H^2(F;\Z)$ is primitive, then
$$\Phi_{E(n);\mo}= (t-1)^{n-2}.$$
\end{lemma}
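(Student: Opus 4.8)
The plan is to compute $\Phi_{E(n)}$ by induction on $n$, using the fiber sum decomposition $E(n) = E(n-1) \#_F E(1)$ together with the Ozsv\'ath-Szab\'o product formula for fiber sums along surfaces of square zero. The base case is $n=2$: here $E(2) = K3$, which is known to have trivial canonical class $c_1 = 0$ and $b_2^+ > 1$, and its Ozsv\'ath-Szab\'o invariant is $\Phi_{E(2)} = 1$ (the invariant is supported only in the Spin$^c$ structure $\mathfrak{s}_0$ with $c_1(\mathfrak{s}_0)=0$, with coefficient $\pm 1$), which matches $(T-1)^0 = 1$. For the inductive step, I would invoke the gluing result for the Ozsv\'ath-Szab\'o mixed invariant under fiber sum along a torus of self-intersection zero (this is the analogue of the product formula of Morgan-Szab\'o-Taubes / Fintushel-Stern in Seiberg-Witten theory; in the Heegaard Floer setting it follows from the composition law, Theorem~\ref{thm: comp law}, applied to the cobordism decomposition $E(n) \setminus \nu(F) \cup_{T^3} E(1) \setminus \nu(F)$, after identifying $T^3 = \partial(\nu F)$ and inserting the relative invariants computed in Proposition~\ref{prop: summary} and Lemma~\ref{lemma: where is contact}). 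The effect of fiber-summing with one more copy of $E(1)$ is to multiply $\Phi$ by the relative invariant of $E(1) \setminus \nu(F)$ as an element of $\underline{HF}^+(T^3)$, which in the appropriate normalization is exactly the factor $T - 1$, where $T = t^{PD([F])}$ records the $[F]$-component of the Spin$^c$ grading. This yields $\Phi_{E(n)} = (T-1) \cdot \Phi_{E(n-1)} = (T-1)^{n-2}$.

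For the twisted version, observe that specializing $H^2(E(n);\Z) \to \Z$ via the pairing with $[\omega]$ sends $T = t^{PD([F])}$ to $t^{\langle \omega \cup PD([F]), [E(n)]\rangle} = t^{\langle [\omega|_F],[F]\rangle} = t^{\pm 1}$, since $[\omega|_F] \in H^2(F;\Z)$ is assumed primitive, i.e.\ it evaluates to $\pm 1$ on $[F]$. Choosing the orientation of $F$ so that this evaluation is $+1$, we get $\Phi_{E(n);\mo} = (t-1)^{n-2}$ directly from $\Phi_{E(n)} = (T-1)^{n-2}$ by the change-of-variables in the group ring that defines the $\mo$-coefficient invariant (the first equality in Theorem~\ref{thm: comp law}). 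One should also check that no other Spin$^c$ structures contribute after this specialization: the basic classes of $E(n)$ are of the form $k[F]$ for $|k| \leq n-2$ of appropriate parity, all of which are accounted for by the expansion of $(T-1)^{n-2}$, so nothing is lost.

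The main obstacle is setting up the fiber-sum gluing formula cleanly in Heegaard Floer homology with the correct bookkeeping of Spin$^c$ structures and the $[F]$-grading. In Seiberg-Witten theory this is the classical fiber sum formula, but here one must either cite its Heegaard Floer analogue from the literature (e.g.\ the work used implicitly in \cite{OSz4} and \cite{OSz5}) or assemble it from the composition law together with the explicit computation of $\underline{HF}^+(T^3;\M)$ in Proposition~\ref{prop: summary} --- in particular identifying the relative Ozsv\'ath-Szab\'o invariant of $E(1)$ minus two fibers, viewed as a cobordism from $T^3$ to $T^3$, as multiplication by $(T-1)$ on the reduced part $\underline{HF}_{red}(T^3;\M) \cong \Z[t,t^{-1}]$. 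Getting the constant right (that it is exactly $T-1$ and not $T-1$ times a unit, and with the correct sign) is where the real content lies; everything else is formal manipulation of the composition law and the change-of-coefficients discussion already in Section~\ref{section: preliminaries}.
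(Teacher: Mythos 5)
The paper does not prove Lemma~\ref{lemma: En}; it is quoted as an external input, citing Ozsv\'ath--Szab\'o~\cite{OSz5} for $E(2)$ and Jabuka--Mark~\cite[Section~1.4.1]{JM} for the general $E(n)$. Your proposal is, in effect, a sketch of the strategy behind those references (an inductive fiber-sum product formula, derived from the composition law and the computation of $\underline{HF}^+(T^3)$). That is the right circle of ideas, and the bookkeeping in your twisted-coefficients paragraph --- specializing $t^{PD([F])}$ to $t^{\langle[\omega|_F],[F]\rangle}=t^{\pm1}$ via primitivity of $[\omega|_F]$ --- is fine. So as a roadmap it points in a correct direction.

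However, as written it has a genuine gap, and you partly flag it yourself: the entire content of the lemma is hidden in the single assertion that the relative invariant of $E(1)$ minus fiber neighborhoods is exactly $T-1$ (not $T-1$ times a unit, and with the correct Spin$^c$ grading). You assert this ``in the appropriate normalization'' and then say getting the constant right ``is where the real content lies'' --- which is an accurate diagnosis, but it means the proposal defers rather than supplies the proof. Worse, there is a circularity trap here that you should name explicitly: in this paper the statement that the cobordism $W_0 = E(1)\setminus(\text{two fibers})$ acts on the $\Z[t,t^{-1}]$-summand by multiplication by $t-1$ is exactly $p(t)=t-1$, i.e.\ Theorem~\ref{thm: polynomial}, and Theorem~\ref{thm: polynomial} is \emph{deduced from} Lemma~\ref{lemma: En}. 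So you cannot identify the $W_0$-factor by appealing to the contact-geometric machinery of Sections~\ref{section: proof of main}--\ref{section: determination}; you must pin it down purely $4$-manifold-topologically (this is precisely the content of the Jabuka--Mark product formula, or of the Ozsv\'ath--Szab\'o computation in~\cite{OSz5} for the base case and first step). Until that independent computation is supplied, the inductive step rests on an unproven claim.
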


This was proved for $E(2)$ by Ozsv\'ath-Szab\'o \cite{OSz5} and for
$E(n)$ in general by Jabuka-Mark \cite[Section~1.4.1]{JM}.

\begin{lemma}\label{lemma: b2plus}
Let $W_1$ be $E(2)$ with a $4$-ball and a neighborhood $N(F)$ of a
regular fiber $F$ removed.  Then $b_2^+(W_1)>1$.
\end{lemma}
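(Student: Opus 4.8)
The plan is to compute $b_2^+$ of $E(2)$ directly and then track how the two deletions affect it. First I would recall the standard invariants of the elliptic surface $E(2)$, which is a $K3$ surface: it is simply connected with $b_2 = 22$, signature $\sigma = -16$, and hence $b_2^+ = (22-16)/2 = 3$ and $b_2^- = 19$. In particular $b_2^+(E(2)) = 3 > 1$, so we have plenty of room to spare.

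Next I would analyze the effect of removing the two pieces. Removing an open $4$-ball $B^4$ from a closed oriented $4$-manifold does not change $H_2$ at all, so it leaves $b_2^+$ unchanged. Removing an open tubular neighborhood $N(F)$ of a regular fiber $F$ is slightly more delicate: $N(F) \cong F \times D^2 \cong T^2 \times D^2$, and its boundary is $T^3$. The class $[F] \in H_2(E(2);\Z)$ is a primitive class of square zero (fibers are disjoint and $F \cdot F = 0$ since the fibration has a section, or simply because $E(2) \to S^2$ admits disjoint regular fibers). The key point is that deleting $N(F)$ kills at most the rank-one subspace spanned by $[F]$ together with a dual class, i.e., it can decrease $b_2$ by at most $2$, and since $[F]$ is isotropic (self-intersection zero), the hyperbolic pair $\langle [F], \text{dual}\rangle$ contributes exactly one to $b_2^+$ and one to $b_2^-$. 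Hence $b_2^+(W_1) \geq b_2^+(E(2)) - 1 = 3 - 1 = 2 > 1$.

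The cleanest way to make the middle step rigorous is via the Mayer--Vietoris sequence for $E(2) = W_1' \cup N(F)$ where $W_1' = E(2) \setminus \operatorname{int} N(F)$ (the $B^4$ removal being harmless), together with the long exact sequence of the pair; this shows that the restriction map $H^2(E(2);\R) \to H^2(W_1';\R)$ has image of codimension at most $2$, and that the intersection form on $W_1'$ (on the image of $H_2(W_1';\R) \to H_2(E(2);\R)$, which is $[F]^\perp / \langle [F]\rangle$) is the restriction of the $E(2)$ form to the orthogonal complement of the hyperbolic pair containing $[F]$. Since that orthogonal complement still has a positive part of dimension $b_2^+(E(2)) - 1 = 2$, we conclude $b_2^+(W_1) \geq 2$.

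I expect the main (really the only) obstacle is the bookkeeping in the middle step: being careful that removing $N(F)$ for an isotropic class $[F]$ removes exactly a hyperbolic summand rather than something worse, and handling the non-compactness/boundary issues correctly (working with $W_1'$ which has boundary $T^3 \sqcup S^3$ and its image in $H_2(E(2))$). This is entirely routine algebraic topology — Mayer--Vietoris plus the structure of unimodular lattices — and no subtlety beyond that is needed; the inequality $b_2^+(W_1) > 1$ has a comfortable margin since we only lose one from a starting value of $3$.
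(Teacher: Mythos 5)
Your approach is correct, and it is genuinely different from the paper's. The paper's proof is entirely constructive: it exhibits four explicit closed surfaces $A_1,B_1,A_2,B_2$ in $E(2)$, disjoint from a regular fiber, built from tori $\alpha\times\gamma,\beta\times\gamma$ in $\partial N(F)=T^3$ and from pairs of Lefschetz thimbles glued across the two $E(1)\setminus N(F)$ halves; their intersection matrix is a direct sum of two rank-two blocks $\left(\begin{smallmatrix}0&1\\1&*\end{smallmatrix}\right)$, each of signature $(1,1)$, which shows $b_2^+(W_1)\geq 2$ without ever invoking the $K3$ lattice. Your argument instead starts from $b_2^+(E(2))=3$ and shows homologically that removing $N(F)$ passes to the sublattice $[F]^\perp$, whose form has $[F]$ in its radical and whose reduction $[F]^\perp/\langle[F]\rangle$ is $2H\oplus 2(-E_8)$ with $b^+=2$. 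Both routes are valid; the paper's has the virtue of being hands-on and not requiring the structure theory of the $K3$ lattice or the standard fact that a primitive isotropic vector in an even unimodular lattice splits off a hyperbolic plane, while yours is shorter once those facts are granted. Two small points to tighten in your write-up: the image of $H_2(W_1';\R)\to H_2(E(2);\R)$ is $[F]^\perp$ itself, not the quotient $[F]^\perp/\langle[F]\rangle$ (the quotient only appears after modding out the radical, which does not affect $b^+$); and you should say explicitly that the kernel of $i_*\colon H_2(W_1';\R)\to H_2(E(2);\R)$ lies in the radical of the intersection form on $W_1'$, so that $b_2^+(W_1')$ is computed entirely on the image. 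Neither is a real gap.
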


\begin{proof}
We will produce two pairs of closed surfaces $A_i$ and $B_i$ in
$E(2)$ for $i=1,2$ which are disjoint from some regular fiber $F$
and have intersection form $\begin{pmatrix} 0 & 1 \\1 & m
\end{pmatrix} \oplus
\begin{pmatrix} 0 & 1 \\ 1 & n
\end{pmatrix}$ for some integers $m, n$.
Since both matrices have determinant $-1$, it follows that there is
some linear combination of $A_1$ and $B_1$ and some combination of
$A_2$ and $B_2$ with positive self-intersection.

Recall that $E(2)$ is obtained by gluing two copies of $E(1)-N(F)$.
Take a basis of $H_1(\bdry N(F))= H_1(F\times S^1)$ of the form
$\alpha,\beta,\gamma$, where $\alpha,\beta$ form a basis for
$H_1(F)$ and $\gamma$ is null-homologous in $H_1(N(F))$.  For
$i=1,2$, choose $\alpha_i = \alpha \times \{ \theta_i \}$ and
$\beta_i = \beta \times \{ \theta_i \}$, where $\theta_1 \neq
\theta_2\in S^1$.

By \cite[Lemma 3.1.10]{GS}, $\alpha_1$ and $\beta_2$ bound disjoint
embedded disks in $E(1)-N(F)$; if we think of the Lefschetz
fibration picture, these disks are ``thimbles'' which correspond to
vanishing cycles. Since we have disks on both copies of $E(1)-N(F)$,
they glue to give two disjoint spheres which we call $B_1$ and
$B_2$.  On the other hand, let $A_1=\beta \times \gamma$ and $A_2=
\alpha \times \gamma$.  Then we have $A_i \cdot B_i = \pm 1$.
Moreover, if $i\not=j$, then $A_i\cdot B_j=0$. It is also clear that
$A_i \cdot A_j =0$ because $A_i$ can be pushed off of $N(F)$.
Finally, the $A_i$ and $B_i$ can be made disjoint from some copy of
$F$.
\end{proof}

We are now ready to prove Theorem~\ref{thm: polynomial}.

\begin{proof}[Proof of Theorem~\ref{thm: polynomial}]
We compare the $4$-dimensional Ozsv\'ath--Szab\'o invariants of the
elliptic surfaces $E(2)$ and $E(3)$.

We regard $E(2)$ as a cobordism $W=E(2)-B^4-B^4$ from $S^3$ to
$S^3$, and decompose $W$ into $W_1= E(2)-B^4-N(F)$ and
$W_2=N(F)-B^4$ along a $3$-torus $T^3$. Let $\omega$ be a symplectic
form on $E(2)$ arising from the Lefschetz fibration, so that
$[\omega]\in H^2(E(2);\Z)$ and $[\omega|_{T^3}]\in H^2(T^3;\Z)$ is
primitive.

Let $\Theta_-$ be the top degree generator of
$\underline{HF}^-(S^3;\mo)$. If $\mathfrak{s}$ is a Spin$^c$
structure on $E(2)$, then, by the composition law (Theorem~\ref{thm:
comp law}), we have:
\begin{equation} \label{eqn: cobord}
\sum_{\eta\in H^1(T^3;\Z)} \Phi_{E(2),\mathfrak{s}+\delta
\eta}~t^{\langle \omega\cup
(\mathfrak{s}-\mathfrak{s}_0+\delta\eta),[E(2)]\rangle}
=\underline{F}^+_{W_2,\mathfrak{s}|_{W_2};\mo} \circ
\underline{F}^{mix}_{W_1,\mathfrak{s}|_{W_1};\mo}(\Theta_-).
\end{equation}
Observe that $\underline{F}^{mix}_{W_1;\mo}$ is defined, since
$b_2^+(W_1)>1$ by Lemma~\ref{lemma: b2plus}. Here $\delta\colon
H^1(T^3) \to H^2(E(2))$ is the connecting homomorphism in the
Mayer-Vietoris sequence for $E(2)$ involving $W_1$ and $W_2$. The
map $\delta$ is equivalent, via Poincar\'e Duality, to the inclusion
map $i\colon H_2(T^3)\to H_2(E(2))$. This means that $PD([F])$ is in
the image of $\delta$.  Now, if we sum Equation~\ref{eqn: cobord}
over a suitable collection of Spin$^c$ structures, we obtain:
\begin{equation} \label{eqn: E2}
\Phi_{E(2);\mo}=\underline{F}^+_{W_2;\mo} \circ
\underline{F}^{mix}_{W_1;\mo}(\Theta_-)=1,
\end{equation}
by Lemma~\ref{lemma: En}.

Now consider the map
$$\underline{F}^+_{W_2;\mo}\colon
\underline{HF}^+(T^3;\mo)\to
\underline{HF}^+(S^3;\mo)=HF^+(S^3;\Z)\otimes_{\Z} \mo.$$ Observe
that all the summands of $HF^+(S^3;\Z)\otimes_{\Z} \mo$ are
$\Z[\R]$, whereas only one summand of $\underline{HF}^+(T^3;\mo)$ is
$\Z[\R]$ by Lemma~\ref{prop: summary}, and the rest have torsion.
Since $\underline{F}^+_{W_2;\mo}$ is $t^a$-invariant, the only
summand of $\underline{HF}^+(T^3;\mo)$ which maps nontrivially to
$\underline{HF}^+(S^3;\mo)$ is $\Z[\R]$. This means that
$\underline{F}^{mix}_{W_1;\mo}$ maps $\Theta_-$ to an element with
nonzero projection to the summand $\Z[\R]\subset
\underline{HF}^+(T^3;\mo)$, which, in turn, is mapped to the bottom
generator $\Theta_+$ of $\Z[\R]\subset \underline{HF}^+(S^3;\mo)$.
Recall that $\underline{c}(T^3,\xi_n;\mo)$ also has a nontrivial
projection to $\Z[\R]\subset \underline{HF}^+(-T^3;\mo)$.

Next we regard $E(3)$ as a cobordism $W'=E(3)-B^4-B^4$, which is
decomposed into $W_1$, $W_0$, and $W_2$. Orient the $3$-torus
$M_1=\bdry W_1 \cap \bdry W_0$ as $\bdry W_1$ and the $3$-torus
$M_2=\bdry W_0\cap \bdry W_2$ as $\bdry W_0$. We view the cobordism
$W_0$ turned ``upside-down'' so that the induced map
$$\underline{F}^+_{W_0;\mo}\colon \underline{HF}^+(M_1;\mo)\to
\underline{HF}^+(M_2;\mo)$$ sends
$$\underline{c}(-M_1,\xi_2;\mo)\mapsto \underline{c}(-M_2,\xi_3;\mo).$$
Restricted to the summand $\Z[\R]$, this means that
$$\underline{F}^+_{W_0;\mo}\colon\Z[\R]\to
\Z[\R]$$ is multiplication by $p(t)$. As in the $E(2)$ case, we
compute:
\begin{equation}
\Phi_{E(3);\mo} = \underline{F}^+_{W_2;\mo} \circ
\underline{F}^+_{W_0;\mo} \circ
\underline{F}^{mix}_{W_1;\mo}(\Theta_-) =t-1.
\end{equation}
Since the projection of $\underline{F}^{mix}_{W_1;\mo}(\Theta_-)$ to
$\Z[\R]$ is nonzero, we have that $p(t)=t-1$.
\end{proof}

\s\n {\em Acknowledgements.} We thank Michael Hutchings and Peter
Ozsv\'ath for discussions which were helpful in determining $p(t)$.
Part of this work was done while the second author visited the
University of Tokyo. He would like to thank Takashi Tsuboi and the
University of Tokyo for their hospitality.

\end{document}